\newtheorem{thm}{Theorem}[section]
\newtheorem{defn}[thm]{Definition}
\newtheorem{lem}[thm]{Lemma}
\newtheorem{prop}[thm]{Proposition}
\newtheorem{cor}[thm]{Corollary}
\newtheorem{exmp}[thm]{Example}
\newtheorem*{convention}{Convention}
\definecolor{light-gray}{gray}{0.9}
\newcommand{\Sym}{\mathfrak{S}}
\newcommand{\len}[1]{\ell(#1)}
\renewcommand{\l}{\lambda}
\renewcommand{\a}{\alpha}
\renewcommand{\b}{\beta}
\newcommand{\w}{w}
\newcommand{\map}[3]{{#1}\,:\,{#2}\longrightarrow{#3}}
\newcommand{\Ilk}[1][k]{\mathsf{L}_{#1}}
\newcommand{\Irk}[1][k]{\mathsf{R}_{#1}}
\newcommand{\Fset}{\mathcal{F}}
\newcommand{\pt}{\partial}
\newcommand{\aascent}{ascent}
\newcommand{\bascent}{$\b$-ascent}
\DeclareMathOperator{\des}{des}
\DeclareMathOperator{\Des}{Des}
\DeclareMathOperator{\maj}{maj}
\DeclareMathOperator{\aasc}{asc}
\DeclareMathOperator{\aAsc}{Asc}
\DeclareMathOperator{\amaj}{comaj}
\DeclareMathOperator{\bmaj}{comaj_{\b}}
\DeclareMathOperator{\basc}{asc_{\b}}
\DeclareMathOperator{\bAsc}{Asc_{\b}}
\DeclareMathOperator{\inv}{inv}
\newcommand{\up}[1]{\Psi^{#1}}
\newcommand{\down}[1]{\Psi_{#1}}
\newcommand{\Aset}{\mathcal{A}}
\newcommand{\Bset}{\mathcal{B}}
\newcommand{\qbinom}[2]{\begin{bmatrix} #1 \\ #2 \end{bmatrix}}
\newcommand{\tqbinom}[2]{\left[\begin{smallmatrix} #1 \\ #2 \end{smallmatrix}\right]}
\newcommand{\pti}[2]{\pt^{#2}{#1}}
\newcommand{\dti}[2]{\dt^{#2}{#1}}
\newcommand{\km}[1]{\mu_{k}(#1)}
\newcommand{\astar}{\a_k}
\newcommand{\Astar}{\Aset_k}
\newcommand{\bstar}{\b_{k+1}}
\newcommand{\Bstar}{\Bset_{k+1}}
\newcommand{\emptyf}{(0)}
\newcommand{\Pset}{\mathcal{P}}
\newcommand{\Dset}{\mathcal{D}}
\newcommand{\qf}[3]{(#1 ; #2)_{#3}}
\newcommand{\pcnt}[1]{\vec{c}_{#1}}
\renewcommand{\vec}[1]{\mathbf{#1}}
\newcommand{\Wset}{\mathcal{R}}
\newcommand{\Wstar}{\overline{\mathcal{R}}}
\newcommand{\wstar}{\overline{\w}}
\newcommand{\aburge}[1]{\Omega_{\a,#1}}
\newcommand{\akburge}[1][k]{\Omega_{\a,#1}}
\newcommand{\akcode}[1]{$[\a,#1]$-code}
\newcommand{\bburge}[1]{\Omega_{\b,#1}}
\newcommand{\bkburge}[1][k]{\Omega_{\b,#1}}
\newcommand{\bkcode}[1]{$[\b,#1]$-code}
\newcommand{\words}[1]{\mathcal{W}_{#1}}
\newcommand{\foata}{\phi}
\newcommand{\leni}{\ell_i}
\newcounter{pairctr}
\newcommand{\Lpair}[1]{\stepcounter{pairctr}\tikzset{tikzmark prefix=\thepairctr}\tikzmark{start}#1\tikzmark{stop}\tikz[remember picture, overlay]{
% over arrow
\draw[->]([shift={(.5ex,2.5ex)}]pic cs:start) to[bend left=20] ([shift={(-.5ex,2.5ex)}]pic cs:stop);}}
\newcommand{\Rpair}[1]{\stepcounter{pairctr}\tikzset{tikzmark prefix=\thepairctr}\tikzmark{start}#1\tikzmark{stop}\tikz[remember picture, overlay]{
% over arrow
\draw[<-]([shift={(.5ex,2.5ex)}]pic cs:start) to[bend left=20] ([shift={(-.5ex,2.5ex)}]pic cs:stop);}}
\newcommand{\BLpair}[1]{\stepcounter{pairctr}\tikzset{tikzmark prefix=\thepairctr}\tikzmark{start}#1\tikzmark{stop}\tikz[remember picture, overlay]{
% over arrow
\draw[->]([shift={(.5ex,2.5ex)}]pic cs:start) to[bend left=20] ([shift={(1.5ex,2.5ex)}]pic cs:stop);}}
\newcommand{\BRpair}[1]{\stepcounter{pairctr}\tikzset{tikzmark prefix=\thepairctr}\tikzmark{start}#1\tikzmark{stop}\tikz[remember picture, overlay]{
% over arrow
\draw[<-]([shift={(-1.5ex,2.5ex)}]pic cs:start) to[bend left=20] ([shift={(-.5ex,2.5ex)}]pic cs:stop);}}
\title{A Generalized Burge Correspondence and  $k$-measure of  Partitions }
\author{John Irving\footnote{
Department of Mathematics and Computing Science, Saint Mary's University,  Halifax, Canada, email: john.irving@smu.ca}}
\date{\today}
\begin{document}
\maketitle

\begin{abstract}
Let $\Pset$ be the set of integer partitions and $\Dset$ the subset of those with distinct parts.
We extend a correspondence of Burge~\cite{Bur-1} between partitions  and binary words to give encodings of both $\Pset$ and $\Dset$ as words over a $k$-ary alphabet, for any fixed $k\geq 2$. These  are used to prove refinements of two partition identities involving \emph{$k$-measure} that were recently derived algebraically by Andrews, Chern and Li~\cite{ACZ}. The relationship between our encoding of $\Dset$ and minimum gap-size partition identities (e.g. Schur's Theorem) is also briefly discussed.    
\end{abstract}

\newcommand{\emptyp}{\varepsilon}

\section{Introduction}

Let $\Pset$ denote the set of all \emph{partitions}, i.e. 
weakly decreasing sequences $\l=(\l_1,\l_2,\ldots)$ of nonnegative integers with finite sum $|\l|:=\l_1+\l_2+\cdots$. If $|\l|=n$ then we say $\l$ is a \emph{partition of $n$}. The nonzero $\l_i$ are called the \emph{parts} of $\l$. We write $\len{\l}$ for the number of parts, also known as the \emph{length} of $\l$.  Let $\Dset$ denote set of partitions with distinct parts.
%Let $\emptyp:=(0,0,\ldots)$ denote the unique partition of $0$.

\begin{figure}[t]
\centering
\includegraphics[height=2cm]{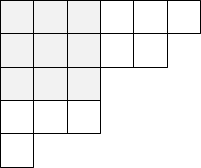}
\caption{The Young diagram of $\l=(6,5,3,3,1)$ with its Durfee square (of size 3) highlighted.}
\label{fig:young}
\end{figure}

The primary concern of this article is the following statistic on $\Pset$,  introduced recently by Andrews, Bhattacharjee and Dastida~\cite{ABD}.

\begin{defn}
\label{defn:kmeasure}
Let $k \geq 1$.
The \emph{$k$-measure} of a partition $\l=(\l_1,\l_2,\ldots)$, denoted $\km{\l}$, is the length of the longest sequence $i_1 < i_2 < \cdots < i_m$ such that $\l_{i_j} - \l_{i_{j+1}} \geq k$ for all $j$.  
\end{defn}

Equivalently, $\km{\l}$ is the maximal length of a \emph{$k$-distinct} subpartition of $\l$, i.e. one whose parts differ pairwise by at least $k$.  For example, $\l=(12,10,8,8,7,5,3,2,2,1)$ has $\mu_1(\l)=8$, $\mu_2(\l)=6$ and $\mu_3(\l)=4$, as evidenced by the following maximum-length subpartitions:
$$
\begin{array}{c|c|l}
k & \km{\l} & \text{subpartition} \\
\hline
1 & 8 & (12,10,8,7,5,3,2,1) \\
2 & 6 & (12,10,8,5,3,1) \\
3 & 4 & (12,8,5,2)
\end{array}
$$
%Evidently $\mu_1(\l)$ is the number of distinct parts in $\l$. 
Note that there may be several $k$-distinct subpartitions of length $\mu_k(\l)$, such as $(12,8,5,2)$ and $(12,8,5,1)$ in the case $k=3$ above.

While Definition~\ref{defn:kmeasure} appears in~\cite{ABD}, the sole focus of that paper is the case $k=2$. In particular, a surprising connection is revealed between 2-measure and another familiar statistic on partitions.   Recall that each $\l \in \Pset$ can be represented by its \emph{Young diagram}, which is as a left- and top-justified arrangement of $|\l|$ boxes in which  $\l_i$ boxes comprise the $i$-th row.  
The \emph{Durfee square} of $\l$ is the largest square that fits into the upper left corner of its Young diagram. (See Figure~\ref{fig:young}.)  The main result of~\cite{ABD} is as follows.

\begin{thm}[\cite{ABD}, Theorem 2]
\label{thm:abd}
The number of partitions of $n$ with 2-measure $m$ equals the number
of partitions of n with Durfee square of side $m$.
\end{thm}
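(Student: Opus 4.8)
The plan is to replace the $2$-measure by a transparent statistic on the set of distinct parts and then to match generating functions. Fix $m$ and set $D_m(q)=\sum_{\l}q^{|\l|}$, the sum over partitions with Durfee square of side $m$, and $N_m(q)=\sum_{\l}q^{|\l|}$, the sum over partitions with $\mu_2(\l)=m$; the theorem is the equality $D_m=N_m$ for all $m$. The series $D_m$ is classical: the Young diagram of a partition with Durfee square of side $m$ consists of the $m\times m$ square, a partition with at most $m$ parts placed to its right, and a partition with all parts at most $m$ placed below it, so $D_m(q)=q^{m^2}/(q;q)_m^2$. It therefore suffices to prove the $z$-refined identity
\[
\sum_{\l}q^{|\l|}z^{\mu_2(\l)}\;=\;\sum_{m\ge0}\frac{q^{m^2}z^m}{(q;q)_m^2}
\]
and then to read off the coefficient of $q^nz^m$.

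The first ingredient is a formula for $2$-measure. Arrange the distinct part-sizes of $\l$ in decreasing order and break them into maximal blocks $B_1,\dots,B_s$ of consecutive integers. Then $\mu_2(\l)=\sum_t\lceil |B_t|/2\rceil$: inside a block of $\ell$ consecutive integers a subpartition with successive gaps $\ge2$ and length $j$ must span a range of size at least $2(j-1)$, so its length is at most $\lceil\ell/2\rceil$, a value clearly attained; and because distinct blocks are separated by a gap $\ge2$, the longest $2$-distinct subpartition of $\l$ is a concatenation of longest ones from each block. Equivalently, $\mu_2(\l)$ is the number of distinct parts that occupy an odd position in their block when the block is read from its largest element downward.

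Second, I would compute $F(q,z):=\sum_{\l}q^{|\l|}z^{\mu_2(\l)}$. A partition is the datum of its set $S$ of distinct part-sizes together with a multiplicity $\ge1$ for each $v\in S$, which contributes a factor $q^v/(1-q^v)$; by the previous paragraph the $z$-weight is $z$ raised to the number of elements of $S$ at an odd position in their block. Scanning the candidate values $v=N,N-1,\dots,1$ and deciding membership in $S$, all that must be remembered is whether $v+1\in S$ and, if so, the parity of its position in its block; this is encoded by a three-state transfer matrix with states ``$v\notin S$'', ``$v\in S$ at odd position'', ``$v\in S$ at even position'', in which each transition that adjoins $v$ to $S$ carries the factor $q^v/(1-q^v)$, and those transitions that land in the odd-position state additionally carry a $z$. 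Then $F(q,z)$ is the (formally convergent) product of these matrices, read in the ``$v\notin S$'' row and summed over the final state. As a check, for $m=1$ this gives $N_1(q)=\sum_{v\ge1}q^v/\bigl((1-q^v)(1-q^{v+1})\bigr)$, which telescopes to $q/(1-q)^2=D_1(q)$.

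The crux — and the step I expect to be the genuine obstacle — is to evaluate this $q$-dependent infinite matrix product in closed form and to identify it with $\sum_{m\ge0}q^{m^2}z^m/(q;q)_m^2$. I would proceed by truncating to values $\le N$, guessing the partially summed product as a polynomial in $z$ whose coefficients are products of powers of $q$ with reciprocals of $(q;q)$-factors, and proving the guess by induction on $N$, the inductive step amounting to a $q$-Chu--Vandermonde/$q$-binomial identity; with the closed form in hand, extracting $[q^nz^m]$ completes the proof. A second route, more in the spirit of this paper, would replace the last two steps by an explicit bijection between partitions with $\mu_2(\l)=m$ and triples (an $m\times m$ square, a partition with at most $m$ parts, a partition with parts at most $m$): the block structure above singles out a canonical $2$-distinct subpartition $\b=(\b_1>\cdots>\b_m)$ of $\l$, and $\b-(2m-1,2m-3,\dots,3,1)$ is a partition with at most $m$ parts; the difficulty is to re-encode the parts of $\l$ unused by $\b$ — which can be arbitrarily large — as a partition with parts at most $m$, and this is exactly the point at which a Burge-style word encoding of $\Pset$ is designed to help.
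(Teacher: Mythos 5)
Your reduction is sound as far as it goes: the block decomposition of the distinct part-sizes and the resulting formula $\mu_2(\l)=\sum_t\lceil|B_t|/2\rceil$ are correct (this is essentially the $k=2$ case of the greedy-cluster count in Proposition~\ref{prop:kmcount}), the Durfee-square generating function $q^{m^2}/(q;q)_m^2$ is classical, and your $m=1$ telescoping check is right. But the argument is not a proof. The entire content of the theorem has been pushed into the step you yourself flag as ``the genuine obstacle'': evaluating the infinite transfer-matrix product and identifying it with $\sum_{m\geq 0}q^{m^2}z^m/(q;q)_m^2$. You describe a strategy (truncate at $N$, guess a closed form for the partial product, induct via a $q$-Chu--Vandermonde identity) but do not exhibit the guess, verify that the inductive step reduces to such an identity, or even show that a closed form of the proposed shape exists. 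Your second route stops at exactly the same point: the re-encoding of the parts of $\l$ unused by the canonical $2$-distinct subpartition as a partition with parts at most $m$ is precisely the nontrivial combinatorial content, and it is left undone. As written, neither route establishes the theorem.

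For comparison, the proof given in Section~\ref{sec:connections} avoids any $q$-series evaluation by composing three bijections: Burge's encoding $\akburge[2]$, which sends $\l$ to a binary word $w$ ending in a single $2$ with $|\l|=\amaj(w)$ and $\mu_2(\l)=\aasc(w)$; the reversed Foata transformation, which rearranges $w$ into a word $v$ with $\inv(v)=\amaj(w)$; and the boundary-path decoding, which reads $v$ as the northeast boundary of a Young diagram $\nu$ with $|\nu|=\inv(v)$ and Durfee side equal to $\aasc(w)$ (in fact $\aAsc(w)$ becomes the set of hook lengths of $\nu$, by the Sagan--Savage description of Foata's map on a two-letter alphabet). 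This is the explicit bijection your second route is reaching for. To complete your proposal you must either carry out that composition or actually prove the refined generating-function identity; at present the key step is only announced, not performed.
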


The proof of Theorem~\ref{thm:abd} offered in~\cite{ABD}  is quite   technical. Indeed, the authors note that their approach belies the simplicity of the result and  leave the search for a bijective proof as an intriguing open problem.  However, it turns out that such a proof had  appeared much earlier in seminal work of Burge~\cite{Bur-1}, who described elegant correspondences between partitions and binary words and applied them to great effect to explain various partition identities.  More will be said on this below. 

Andrews, Chern and Li~\cite{ACZ} revisited $k$-measure for general $k$ and developed  generating series describing the distribution of $\km{\l}$ over both $\Pset$ and the subset $\Dset$ consisting of partitions with distinct parts.  Their results are best stated in terms of the \emph{$q$-Pochhammer symbol}, defined by 
$$
	\qf{a}{q}{n} := \prod_{i = 0}^n (1-aq^i)
$$ 
for $n \geq 1$ or $n=\infty$.  Note that we take a purely formal viewpoint and are not concerned with convergence; i.e.,  $\qf{a}{q}{\infty}$ and other infinite sums and products are to be regarded as residing int he appropriate ring of formal power series.  With this in mind, the main results of~\cite{ACZ} are as follows.

\begin{thm}[\cite{ACZ}, Theorem 1.5]
\label{thm:acz1}
Let $k \geq 2$. Then  
\begin{align}
\label{eq:acz1}
	\sum_{\l \in \Pset} y^{\len{\l}} z^{\km{\l}} q^{|\l|}
	&= \qf{z}{q^{k-1}}{\infty}
	\sum_{n \geq 0} \frac{z^n}{\qf{q^{k-1}}{q^{k-1}}{n}
	\qf{yq}{q}{(k-1)n}} 
	%\\
	%\intertext{and}
	%\sum_{\l \in \Dset} y^{\len{\l}} z^{\km{\l}} q^{|\l|}
	%&= \qf{z}{q^{k}}{\infty}
	%\sum_{n \geq 0} \frac{\qf{-yq}{q}{kn}z^n}{\qf{q^{k}}{q^{k}}{n}}. 
%\label{eq:acz2}
\end{align}
\end{thm}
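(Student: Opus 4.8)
The plan is to prove Theorem~\ref{thm:acz1} bijectively via the generalized Burge correspondence announced in the abstract, tracking $\len{\l}$, $\km{\l}$ and $|\l|$ as statistics. First I would set up the encoding of an arbitrary partition $\l \in \Pset$ as a word $w$ over the $k$-ary alphabet $\{0,1,\ldots,k-1\}$ (or some equivalent labelling), generalizing Burge's binary correspondence. The key design requirement is that, under this encoding, the three relevant statistics on $\l$ become tractable statistics on $w$: the size $|\l|$ should become (a major-index-like weighted sum of letter positions), the length $\len{\l}$ should become (a linear functional, e.g.\ a weighted letter count), and — crucially — the $k$-measure $\km{\l}$ should become something like the number of occurrences of the maximal letter, or a ``descent-type'' count that records how many times one can jump by at least $k$ in the part-sizes. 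The whole point of passing to words is that $\km{\l}$, which is defined as an awkward longest-subsequence quantity on $\l$, should turn into a \emph{local} and hence additively-separable statistic on $w$.

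Next I would decompose the word $w$ in a way that mirrors the right-hand side of \eqref{eq:acz1}. Reading the RHS, the factor $\qf{z}{q^{k-1}}{\infty} = \prod_{i\geq 0}(1 - z q^{(k-1)i})$ together with $1/\qf{q^{k-1}}{q^{k-1}}{n}$ is, up to the $z^n$, the generating function $\sum_{n\ge 0} (-1)^n z^n q^{(k-1)\binom{n}{2}} \big/ \qf{q^{k-1}}{q^{k-1}}{n} \cdot (\ldots)$ — but more usefully, the combined expression $\qf{z}{q^{k-1}}{\infty}\sum_n z^n/\qf{q^{k-1}}{q^{k-1}}{n} \cdot (\text{stuff})$ is the classical shape of a Durfee-type dissection: the sum over $n$ picks out the ``size of the relevant square'' (here $n$ plays the role of $\km{\l}$-many staircase steps of height $k-1$), the factor $1/\qf{q^{k-1}}{q^{k-1}}{n}$ accounts for a partition into at most $n$ parts each a multiple of $k-1$ (the ``overflow'' past each step), the factor $1/\qf{yq}{q}{(k-1)n}$ accounts for a partition fitting in an $(k-1)n$-wide strip with each part weighted by $y$ (the ``rows below/beside the staircase''), and $\qf{z}{q^{k-1}}{\infty}$ together with the $z^n$ enforces, via inclusion–exclusion, that the staircase is maximal — i.e.\ that $\km{\l}$ really equals $n$ and not something larger. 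So I would carve $\lambda$ (equivalently $w$) into: a maximal $k$-distinct ``spine'' of $m = \km{\l}$ parts arranged as a generalized staircase with steps $\geq k$; a partition recording the excess of each spine part over its minimal staircase value, into at most $m$ parts (contributing $1/\qf{q^{k-1}}{q^{k-1}}{m}$ after the right change of variables, using that consecutive spine gaps of exactly $k-1$ vs.\ more is where the $q^{k-1}$ granularity comes from); and the remaining parts of $\lambda$, each squeezed into the available ``width'' $(k-1)m$ and $y$-weighted, giving $1/\qf{yq}{q}{(k-1)m}$.

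Then I would carry out the generating-function bookkeeping: sum the contributions of the three pieces as formal products, obtaining for fixed $m$ a term $z^m q^{(\text{staircase minimum size})} \cdot \frac{1}{\qf{q^{k-1}}{q^{k-1}}{m}} \cdot \frac{1}{\qf{yq}{q}{(k-1)m}}$, and check that summing over $m \geq 0$ — after the inclusion–exclusion that produces the prefactor $\qf{z}{q^{k-1}}{\infty}$ enforcing maximality of the spine — reproduces \eqref{eq:acz1} exactly, including the precise exponent $(k-1)$ everywhere and the interaction of $y$ with $q$. I would double-check the edge cases $m=0$ (the empty partition, contributing $1$) and $k=2$ (which must recover Burge's original binary correspondence and, via Theorem~\ref{thm:abd}, the Durfee-square statement with $z$ marking the Durfee side).

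The main obstacle, I expect, is \textbf{defining the generalized Burge encoding so that $\km{\l}$ becomes a genuinely local word statistic and simultaneously controlling maximality of the $k$-distinct spine}. The difficulty is that a single partition has in general many $k$-distinct subpartitions of the maximum length $\km{\l}$ (as the paper's own example with $(12,8,5,2)$ and $(12,8,5,1)$ shows), so there is no canonical spine; the encoding must make a consistent choice (e.g.\ always take the lexicographically greatest, or the ``greedy from the top'' one) and I must prove both that this choice is well-defined and that it is compatible with the word decomposition — in particular that the ``excess'' partition and the ``remaining parts'' partition are uniquely recoverable and range over exactly the claimed sets with no overcounting. Equivalently, on the word side, I must show the statistic I identify with $\km{\l}$ (say, a count of maximal-letter occurrences or of a certain pattern) genuinely equals Definition~\ref{defn:kmeasure}; this equivalence is where the real combinatorial content lies, and it is what the inclusion–exclusion factor $\qf{z}{q^{k-1}}{\infty}$ is secretly encoding. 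Once that correspondence is nailed down and shown to be a bijection respecting $(\len{\l}, \km{\l}, |\l|)$, summing the geometric-type series for each block is routine.
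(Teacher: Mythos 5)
Your high-level strategy --- encode $\l$ as a $k$-ary word so that $\km{\l}$ becomes a local, additively separable statistic --- is exactly the first half of the paper's argument: Theorem~\ref{thm:burge} sends $|\l|$, $\km{\l}$ and the part-counts mod $k-1$ to $\amaj(w)$, $\aasc(w)$ and the letter multiplicities of $w=\akburge(\l)\in\words{k}$. But you have only \emph{postulated} that such an encoding exists; constructing it and verifying the three statistics is the bulk of Section~\ref{sec:encoding} (the greedy cluster sets $\Ilk$, $\Irk$, the operators $\a_1,\ldots,\a_k$, Proposition~\ref{prop:bijections} and Lemma~\ref{lem:stats}), and nothing in your proposal supplies that construction. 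The obstacle you correctly identify --- that there is no canonical maximal $k$-distinct subpartition --- is left unresolved in your sketch, whereas in the paper it is dissolved by Proposition~\ref{prop:kmcount}: one never chooses a spine at all, since $\km{f}$ equals the number of greedy clusters, and the encoding is built from the cluster structure rather than from any distinguished $k$-distinct subpartition.

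The second half of your plan also rests on a misreading of the right-hand side of \eqref{eq:acz1}. You identify the summation index $n$ with $\km{\l}$ and describe the $z^m$ summand as carrying a factor $q^{(\text{staircase minimum size})}$; but the minimal $k$-distinct partition with $n$ parts has size $n+k\binom{n}{2}$, and no such $q$-power appears in the summand (which has constant term $1$ in $q$). In the paper, $n$ is a dummy variable arising from $q$-binomial expansions: \eqref{eq:acz1} is obtained by pushing the word statistics through MacMahon's formula \eqref{eq:macmahon} for the joint distribution of $(\maj,\des)$ over rearrangement classes, in the Clarke--Foata form of Theorem~\ref{thm:foata} and its limiting case Corollary~\ref{cor:foata}; the prefactor $\qf{z}{q^{k-1}}{\infty}$ and the sum over $n$ come from \eqref{eq:binom2} and the limit $a_1\to\infty$, not from an inclusion--exclusion enforcing maximality of a spine. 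A direct Durfee-type dissection proof may well exist, but it would have to be followed by a Heine-type transformation to reach the stated form, and your proposal carries out neither the dissection nor that conversion. As written, the argument has two genuine gaps: the encoding is assumed rather than built, and the passage from word statistics to the closed form is based on an interpretation of the formula that does not match it.
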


\begin{thm}[\cite{ACZ}, Theorem 1.10]
\label{thm:acz2}
Let $k \geq 2$. Then
\begin{align}
\label{eq:acz2}
	\sum_{\l \in \Dset} y^{\len{\l}} z^{\km{\l}} q^{|\l|}
	= \qf{z}{q^{k}}{\infty}
	\sum_{n \geq 0} \frac{\qf{-yq}{q}{kn}z^n}{\qf{q^{k}}{q^{k}}{n}}.
\end{align}
\end{thm}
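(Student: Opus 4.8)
The statement is the ``distinct parts'' companion of Theorem~\ref{thm:acz1}, and the abstract signals that both should be proved through a $k$-ary analogue of Burge's word correspondence. The plan is: (i) construct an explicit bijection $\bkburge$ between $\Dset$ and a set $\bkwords{k}$ of words over the alphabet $\{0,1,\ldots,k\}$ cut out by local admissibility conditions that encode distinctness of parts (the \bkcode{k}); (ii) show that this bijection transports the triple $(\len\l,\km\l,|\l|)$ to a tractable triple $(\ell(w),\basc(w),\bmaj(w))$ of word statistics; and (iii) evaluate the resulting generating function over $\bkwords{k}$.

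\textbf{Step 1: the \bkcode{k}.} I would define $\bkburge(\l)$ by a Burge-style reading of the boundary lattice path of the Young diagram of $\l$, processed from the outside inward, grouping consecutive rows that lie within $k$ of one another and emitting at each step a letter of $\{0,\ldots,k\}$ that records whether the current lattice step is vertical or horizontal together with the value of a running counter. Distinctness of the parts of $\l$ should become a purely local constraint on the output word — forbidding an immediate repetition of a distinguished letter — which is exactly the restriction cutting the class of $k$-ary words that encodes $\Pset$ (the \akcode{k}s used for Theorem~\ref{thm:acz1}) down to the subclass $\bkwords{k}$. Invertibility is then checked by running the reading backwards.

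\textbf{Step 2: the statistics.} Under $\bkburge$ one expects $q^{|\l|}\mapsto q^{\bmaj(w)}$ for a $\b$-weighted comajor index $\bmaj$ summing position-dependent contributions of the letters, and $\len\l\mapsto\ell(w)$ (or the count of letters of a prescribed type). The one genuinely substantial point — and what I expect to be the main obstacle — is the identity $\km\l=\basc(w)$: the $k$-measure is defined as a longest-chain statistic and is a priori global, yet it must be carried to the number of $\b$-ascents of the code, which is additive and local. I would establish this by a greedy/monovariant analysis of the reading process: each increment of the counter records the selection of a part lying at least $k$ below the previously selected one, so a longest $\b$-ascent run of $w$ reads off a $k$-distinct subpartition of $\l$ of maximal length, and conversely every such subpartition arises this way. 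This is the $k$-ary, distinct-parts refinement of the phenomenon underlying Theorem~\ref{thm:abd}, where $k=2$ and ``longest chain'' specializes to ``side of the Durfee square.''

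\textbf{Step 3: the summation.} Given the dictionary, the left-hand side of~\eqref{eq:acz2} equals $\sum_{w\in\bkwords{k}} y^{\ell(w)} z^{\basc(w)} q^{\bmaj(w)}$, which I would compute by slicing each word at its $\b$-ascent positions: a word with $\basc(w)=n$ decomposes into $n$ internal ascent blocks together with an ascent-free remainder. Summing over all admissible configurations and collecting terms (the $q$-binomial theorem absorbing the ``outer'' part) should produce $\qf{z}{q^{k}}{\infty}$ times a sum over $n$ in which the $n$ blocks jointly contribute $\qf{-yq}{q}{kn}\,z^{n}/\qf{q^{k}}{q^{k}}{n}$: the denominator $\qf{q^{k}}{q^{k}}{n}$ records a free partition of ``$k$-padding'' distributed among the $n+1$ slots created by the $n$ ascents, while the numerator $\qf{-yq}{q}{kn}=\prod_{j}(1+yq^{j})$ (up to degree about $kn$) records the distinct small parts that may be inserted below them — and it is exactly here that distinctness forces a product of factors $1+yq^{j}$ in place of the $1/\qf{yq}{q}{(k-1)n}$ of Theorem~\ref{thm:acz1}. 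Summing over $n\geq 0$ yields the right-hand side. Once the correspondence of Steps 1--2 is pinned down, this last step is a routine (if slightly intricate) $q$-series computation that runs in close parallel with the one for Theorem~\ref{thm:acz1}, the only structural change being the substitution reflecting passage from arbitrary to distinct parts.
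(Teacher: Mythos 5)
There is a genuine gap: what you have written is a plan whose central object is never constructed, and the specific construction you gesture at is not the right shape. You propose to realize $\Dset$ as a \emph{locally constrained subclass} of the $k$-letter codes used for $\Pset$ (``forbidding an immediate repetition of a distinguished letter''), obtained from a boundary-path reading of the Young diagram. The paper does something structurally different: it encodes $\Dset$ (via $0$--$1$ frequency lists) with a \emph{new} family of operators $\b_1,\ldots,\b_{k+1}$ on a $(k{+}1)$-letter alphabet, chosen so that the greedy cluster shifts move mass by $k$ rather than $k-1$ and hence preserve binary sequences; the resulting map $\bkburge$ is a bijection onto the \emph{entire} set $\words{k+1}$ of words ending in a single maximal letter, with no local admissibility conditions. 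That surjectivity is exactly what makes your Step 3 feasible: the right-hand side of~\eqref{eq:acz2}, with its numerator $\qf{-yq}{q}{kn}$, is the Clarke--Foata generating function for $(\maj_s,\des_s)$ over \emph{unrestricted} rearrangement classes with $s=k$ ``large'' letters (Corollary~\ref{cor:foata}, obtained from their identity by letting the multiplicity of the smallest letter tend to infinity). If the image of your encoding really were a constrained sublanguage of the $\Pset$-codes, you would instead face the much harder problem of enumerating a constrained language by comajor index and ascent number, and nothing in your sketch addresses that.

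Two further points are asserted rather than proved, and both carry real content. First, the identity $\km{\l}=\basc(w)$ does not follow from a generic ``greedy/monovariant analysis'': in the paper it requires an exact case analysis (Lemma~\ref{lem:bstats}) of how $\km{\cdot}$ and $|\cdot|$ change under a single application of the inverse operator, split according to which class $\Bset_i$, $\Bset_j$ the word and its image lie in; the answer differs between the partition case and the distinct-parts case precisely in the clause governing equal adjacent letters. Second, you never pin down what a $\b$-ascent is. The definition (an index with $w_i<w_{i+1}$ \emph{or} $w_i=w_{i+1}$ with $w_i$ non-maximal) is the whole reason the numerator becomes the product $\qf{-yq}{q}{kn}=\prod_j(1+yq^j)$ encoding distinctness; your Step 3 attributes this to ``distinct small parts inserted below'' the blocks, which is the right intuition but cannot be checked without the actual statistic and the actual block decomposition. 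As it stands, each of your three steps defers the substantive work.
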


Both~\eqref{eq:acz1} and~\eqref{eq:acz2} are proved in~\cite{ACZ} by  developing and solving $q$-difference relations satisfied by the respective partition generating series.  The proofs are succinct and straightforward, involving elementary arguments to set up the difference relations, standard $q$-series manipulations  to solve them, and an application of ``Heine's method'' to bring the solutions into the forms quoted here.  Nonetheless, given the simplicity of the results, the authors ask whether a more combinatorial  approach could be found.

The primary aim of this article
 is to extend one of Burge's aforementioned correspondences and apply it to illuminate some of the combinatorics underlying these identities.  In particular, for any fixed $k\geq 2$ we develop a bijection between partitions and  $k$-ary words whereby the size and $k$-measure of a partition are related to the  descent number (des) and major index (maj) of the associated word. (Our bijection agrees with Burge's in case $k=2$.)  A modest refinement of Theorem~\ref{thm:acz1} is then seen to follow  via MacMahon's classical enumeration of the pair (des, maj) over rearrangement  classes of words.  We  also  modify our bijection so that it applies to partitions with distinct parts,  and derive a similar refinement of Theorem~\ref{thm:acz2}  by appealing to an extension of MacMahon's result due to Clarke and Foata~\cite{ClFo-2,ClFo-3}.

To state our results more precisely we will require a few definitions.

By a \emph{word} on a set $X$ we mean a finite string of symbols from $X$, i.e., an element of the free monoid $X^*$.  Suppose $X=(X,\prec)$ is totally ordered and let $w=w_1\cdots w_n \in X^*$. Recall that an \emph{inversion} in $w$ is a pair $(i,j)$ with $1 \leq i < j \leq n$ and $w_j \prec w_i$. The number of inversions in $w$ is denoted $\inv(w)$.  A \emph{descent} in $w$ is  an index $1 \leq i < n$ such that $w_{i+1} \prec w_{i}$.   Let $\Des w$ denote the set of descents in $w$. Then the \emph{descent number} and \emph{major index} of $w$ are  defined by $\des \w := |\Des w|$ and $\maj w := \sum_{i \in \Des w} i$, respectively.

We have elected to frame our results in terms of two different types of \emph{ascents} that can occur in words on the alphabet $X=\{1,\ldots,k\}$.  Here, and throughout, $k$ denotes a fixed but arbitrary positive integer. 

\begin{defn}
Let $\words{k}$ be the set of words on $\{1,\ldots,k\}$ that end with a single $k$.
\end{defn}

\newcommand{\rev}[1]{\sigma_{#1}}

\begin{defn}
\label{defn:ascents}
Let $\w = w_1 \cdots w_n \in \words{k}$. Then:
\begin{itemize}
\item An \emph{\aascent} of $\w$ is an index $1 \leq i < n$ such that  $\w_i < \w_{i+1}$. 
\item  A \emph{\bascent} of $\w$ is an index $1 \leq i < n$  such that either $\w_i < \w_{i+1}$ or $w_i=w_{i+1} < k$. 
\end{itemize}
The set of \aascent{s} of $\w$ is denoted  $\aAsc(\w)$. Its size and sum are denoted by $\aasc(\w):=|\aAsc(\w)|$ and $\amaj(\w) := \sum_{i \in \aAsc(\w)} i$.  The notations $\bAsc(\w)$, $\basc(\w)$ and $\bmaj(\w)$ are defined similarly.
\end{defn}

For example,  $w=322441314 \in \words{4}$ has  $\aAsc(\w)=\{3,6,8\}$ and $\bAsc(w)=\{2,3,6,8\}$, hence $\aasc(\w)=3$,  $\amaj(\w)=17$, $\basc(w)=4$, and $\bmaj(w)=19$.   We note that $\amaj(\w)$ is often called the \emph{comajor index} of $w$.

We can now fully describe the bijections mentioned earlier. Explicit constructions and examples  will be provided in Section~\ref{sec:encoding}.  

\begin{thm}
\label{thm:burge}
For each $k \geq 2$ there is a bijection $\map{\akburge}{\Pset}{\words{k}}$ such that if $w=\akburge(\l)$ and $a_i$ is the number of occurrences of $i$ in $\w$ then:
\begin{enumerate}
\item	$\l$ contains exactly $a_i$ parts congruent to $i$ modulo $k-1$, for $1 \leq i < k$
\item	$|\l| = (k-1)\amaj(\w) - \sum_{i=1}^{k-2} (k-1-i)a_i$
\item	$\km{\l} = \aasc(\w)$
\end{enumerate}
\end{thm}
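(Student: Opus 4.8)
The plan is to construct the bijection $\akburge$ explicitly by generalizing Burge's column-reading procedure for the $k=2$ case, and then to verify the three listed properties by tracking local contributions. First I would set up the correspondence: given $\l \in \Pset$, I would read off a sequence of "codes" recording, for each part, its residue modulo $k-1$ together with enough positional data to reconstruct the quotient. Concretely, since every part $\l_i$ can be written uniquely as $\l_i = (k-1)q_i + r_i$ with $1 \le r_i \le k-1$ (using representatives $1,\dots,k-1$ rather than $0,\dots,k-2$, so that a part divisible by $k-1$ gets residue $k-1$), I would interleave the residue symbols with "separator" symbols — here the letter $k$ plays the role of Burge's marker — so that the number of $k$'s between consecutive residue letters encodes the gaps $q_i - q_{i+1}$ in the Durfee-type staircase. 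The word $w$ is then built so that it lies in $\words{k}$ (ending in a single $k$), and the map is invertible because the residues recover property (1)'s data and the $k$-separators recover the quotients.

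The second step is to prove property (1), which should be essentially immediate from the construction: the non-$k$ letters of $w$ are exactly the residues $r_i \in \{1,\dots,k-1\}$ of the parts, so $a_i$ for $1 \le i \le k-1$ counts parts with $\l_j \equiv i \pmod{k-1}$ (with the convention that $a_{k-1}$ counts parts divisible by $k-1$, matching the residue-$k-1$ class); the letter $k$ itself is the separator and is not counted among $a_1,\dots,a_{k-2}$ in formula (2). Then I would prove property (2) by a careful bookkeeping argument: each ascent position $i \in \aAsc(w)$ contributes $i$ to $\amaj(w)$, and I would show that $(k-1)\amaj(w)$ overcounts $|\l| = \sum (k-1)q_j + \sum r_j$ by exactly $\sum_{i=1}^{k-2}(k-1-i)a_i$. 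The idea is that an ascent $w_i < w_{i+1}$ records a step in the staircase of quotients, and the "distance from the end" (i.e., the index $i$) weights that step by the number of parts to which it applies, so summing gives $\sum q_j$ up to a correction coming from the residue letters being recorded with representatives in $\{1,\dots,k-1\}$ instead of $\{0,\dots,k-2\}$ — that correction is precisely $(k-1-r_j)$ summed over parts with residue $< k-1$, which regroups as $\sum_{i=1}^{k-2}(k-1-i)a_i$.

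Finally, for property (3) I would argue that a $k$-distinct subpartition of $\l$ of maximal length corresponds exactly to a maximal increasing subsequence of the quotient staircase, which in turn corresponds to the ascents of $w$: two parts $\l_a > \l_b$ differ by at least $k$ essentially when their quotients $q_a, q_b$ satisfy $q_a > q_b$ (the residues contribute a bounded correction of size at most $k-2$, which is why the threshold is $k$ and not $k-1$), and a chain of strictly increasing quotients of length $m$ is read off precisely from $m$ ascent positions in $w$. I expect the main obstacle to be this last step: getting the inequality "$\ge k$" to align cleanly with "strictly increasing quotient" requires handling the residue corrections at the boundary carefully — in particular, consecutive parts with the same quotient but decreasing residue must not create a spurious $k$-gap, and parts with adjacent quotients must be counted correctly depending on their residues. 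I would isolate this as a lemma characterizing $k$-distinctness purely in terms of the word $w$ (likely: $\l_{i_1} > \cdots > \l_{i_m}$ is $k$-distinct iff the corresponding letters of $w$ sit at positions forming an increasing run pattern), after which $\km{\l} = \aasc(w)$ follows since the longest such chain has length equal to the number of ascents. The other two properties (1) and (2) I expect to be routine once the construction is pinned down precisely.
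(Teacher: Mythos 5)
There is a genuine gap here, and it is not only the one you flagged in property (3). The construction you describe --- record the residues $r_i\in\{1,\dots,k-1\}$ of the parts modulo $k-1$ and interleave them with runs of the letter $k$ encoding the successive quotient differences $q_i-q_{i+1}$ --- is a \emph{direct, positional} read-off of the partition, and such encodings naturally pair $|\l|$ with the \emph{inversion number} of the word, not with $\amaj$. (This is essentially the paper's second encoding $\Omega_{\delta,k}$ of Theorem~\ref{thm:inv}, which for $k=2$ is the boundary path of the Young diagram and satisfies $|f|=(k-1)\inv(w)-\sum_i(k-1-i)a_i$.) A small example already breaks property (2) for your word: for $k=2$ and $\l=(3,1,1)$ your recipe yields $w=122112$ (all residues $1$, quotient gaps $2,0$, terminal $k$), for which $\amaj(w)=1+5=6\neq 5=|\l|$, while the correction sum is empty; the actual code is $\akburge[2](\l)=12112$, a \emph{shorter} word. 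The argument you sketch for (2) --- ``the index $i$ of an ascent weights that step by the number of parts to which it applies'' --- is precisely the inversion-counting argument; it does not transfer to $\amaj$, because major-index-type statistics are not sums of local pairwise contributions. The passage from $\inv$ to $\amaj$ is exactly Foata's transformation, which is genuinely recursive, and bridging that gap is the real content of the theorem. Your quotient heuristic for (3) also fails on its own terms: for $k=3$ the partition $\l=(12,10,8,8,7,5,3,2,2,1)$ has a strictly decreasing quotient chain of length $6$ but $\mu_3(\l)=4$.

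The paper's proof takes an entirely different route. It defines operators $\a_1,\dots,\a_k$ on frequency lists: each greedily locates ``forward clusters'' of length $k$, pushes one unit from the left end of each cluster to its right end, and (for $j<k$) increments $f_j$. These maps have disjoint images $\Aset_1,\dots,\Aset_k$ partitioning $\Fset$ and a common left inverse $\pt$ (Proposition~\ref{prop:bijections}); the word $\akburge(f)$ is read off by iterating $\pt$ down to the empty partition and recording which $\Aset_j$ one is in at each step. Properties (2) and (3) then follow by summing a single local lemma (Lemma~\ref{lem:stats}) describing how $|\cdot|$ and $\km{\cdot}$ change under one application of $\pt$; property (3) also relies on the greedy-cluster identity $\km{f}=|\Irk(f)|$ of Proposition~\ref{prop:kmcount}, which is the correct replacement for your residue-corrected quotient comparison. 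If you want a non-recursive construction in the spirit of your proposal, you would have to compose your read-off with a $k$-ary analogue of Foata's bijection; for $k=2$ this is made explicit by Sagan and Savage and is discussed in Section~\ref{sec:connections}, but for $k>2$ the paper leaves such a description open.
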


\begin{thm}
\label{thm:bburge}
For each $k \geq 1$ there is a bijection $\map{\bkburge}{\Dset}{\words{k+1}}$  such that if $w=\bkburge(\l)$ and $a_i$ is the number of occurrences of $i$ in $\w$ then:
\begin{enumerate}
\item	$\l$ contains exactly $a_i$ parts  congruent to $i$ modulo $k$, for $1 \leq i \leq k$
\item	$|\l| = k\,\bmaj(\w) - \sum_{i=1}^{k-1} (k-i)a_i$
\item	$\km{\l} = \basc(\w)$
\end{enumerate}
\end{thm}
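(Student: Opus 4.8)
The plan is to build the bijection $\bkburge$ for $\Dset$ by first handling the case of $\Pset$ (i.e., proving Theorem~\ref{thm:burge}) and then adapting the construction. The central idea is a "reading" procedure on the Young diagram. Given $\l\in\Pset$, I would scan its parts in some order and record, for each part, a letter in $\{1,\dots,k\}$ together with positional data, in such a way that the part sizes modulo $k-1$ determine the letters (this forces item (1)), the maximal $k$-distinct subpartition corresponds to a maximal ascending run, i.e. to $\aAsc(w)$ (forcing item (3)), and the sizes themselves are recovered by a weighting of the ascent positions (forcing item (2)). Concretely, I expect the construction to proceed column-by-column or via successive "peeling" of the Durfee square and the parts, much as in Burge's original correspondence; each step strips off a part, emits a letter recording its residue class and whether it begins a new gap of size $\geq k$, and shifts the remaining diagram. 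The terminal single $k$ in $\words{k}$ should correspond to a padding/normalization step (e.g., recording the empty tail or the base of the diagram), which is what makes the map land in $\words{k}$ rather than all of $\{1,\dots,k\}^*$.

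For Theorem~\ref{thm:bburge} specifically, the modification is to enforce the distinct-parts condition. Since consecutive parts of $\l\in\Dset$ differ by at least $1$, a gap of size exactly $1$ between consecutive parts is "forced" and should not be counted toward the $k$-measure; this is precisely the asymmetry between \aascents\ and \bascents\ in Definition~\ref{defn:ascents}, where an equality $w_i=w_{i+1}<k$ is also declared an ascent. So I would run the same peeling procedure but with the alphabet $\{1,\dots,k+1\}$ and with residues taken modulo $k$ (item (1)), arranging that a gap of size exactly one between consecutive parts is encoded as a repeated letter $w_i=w_{i+1}<k+1$ rather than a strict ascent. Then a $k$-distinct subpartition of $\l$ — a choice of parts pairwise differing by $\geq k$ — corresponds to a choice of positions forming a \bascent-run, and $\km{\l}=\basc(w)$ follows by a greedy/maximality argument. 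The size formula (item (2)) comes from tracking how much each peeling step removes: a step at position $i$ that is a \bascent\ contributes $k$ times its index, with a correction term $\sum (k-i)a_i$ absorbing the residue offsets.

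The proof would then have three verification steps after the construction is pinned down. First, check that $\bkburge$ is well-defined and that its image is exactly $\words{k+1}$ — this needs an argument that the emitted words always end in a single $k+1$ (not a longer run) and that every such word arises. Second, exhibit an explicit inverse: given $w\in\words{k+1}$, reconstruct $\l$ by reversing the peeling, building parts from the letters and their positions; showing this is well-defined and two-sided inverse to $\bkburge$ establishes bijectivity. Third, verify the three numerical identities, which by design should reduce to bookkeeping: (1) is immediate from how letters encode residues, (2) is a telescoping sum over peeling steps, and (3) requires the greedy argument that the longest \bascent-subsequence of $w$ has length $\basc(w)$ (true because in any word, the maximal length of an increasing-or-equal-below-$k$ subsequence equals the number of such ascents plus... — actually one shows directly that selecting exactly the positions $i\notin\bAsc(w)$ endpoints gives an optimal $k$-distinct subpartition, and no longer one exists).

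The main obstacle I anticipate is pinning down the exact peeling/reading rule so that all three of items (1)–(3) hold simultaneously with the stated constants, and so that the distinct-parts condition corresponds cleanly to the \bascent\ convention rather than producing spurious repeated letters or failing injectivity. In particular, getting the constant $\sum_{i=1}^{k-1}(k-i)a_i$ exactly right, and verifying that the "last letter is a single $k$" normalization is compatible with distinctness (so that $\l\in\Dset$ maps into $\words{k+1}$ and conversely), will require care. The $k$-measure statement (item (3)) is conceptually the subtlest, since it is an extremal quantity; I would isolate it as a lemma asserting that for $w\in\words{k+1}$, the maximal length of a subsequence $i_1<\cdots<i_m$ with each pair $(i_j,i_{j+1})$ "spanning a \bascent" equals $\basc(w)$, and that under $\bkburge$ such subsequences correspond bijectively to $k$-distinct subpartitions of $\l$ — this is where the alignment between the combinatorics of the word and the geometry of the partition does the real work.
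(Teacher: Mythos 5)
There is a genuine gap here: your proposal is a plan for a proof rather than a proof, and the one piece of mathematics that everything else depends on --- the precise definition of the encoding --- is exactly the piece you leave unspecified. The paper's construction does not ``peel off one part at a time and shift the remaining diagram.'' It works on the frequency list $f\in\BFset$ of $\l$ and, at each step, greedily parses the support of $f$ from right to left into maximal arithmetic progressions of common difference $k$ (the backward clusters indexed by $\Irk(f)$), then shifts \emph{every} such progression down by $k$ simultaneously; the emitted letter records only the residue position of the smallest cluster. This simultaneity is not a technical convenience but is forced by your own item (2): for $|\l| = k\,\bmaj(\w) - \sum_i (k-i)a_i$ to telescope, a single step must decrease $|\l|$ by $k\,\km{\l}$ plus a bounded residue correction, and since one step can change at most one letter's worth of $\bmaj$, it must move $\km{\l}$ parts at once, not one. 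A genuinely part-by-part peeling rule cannot produce the stated constants. Likewise your heuristic that a repeated letter $w_i=w_{i+1}=j\le k$ encodes ``a gap of size exactly one'' is off: in the actual construction two consecutive applications of $\b_j$ create two parts in the \emph{same} residue class $j$ differing by exactly $k$, which is why such a repetition must still count as an ascent for distinct parts (it kills one unit of $k$-measure) but not for general partitions --- this is the real source of the $\basc$ versus $\aasc$ asymmetry.

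Concretely, what is missing and cannot be waved through is the analogue of the paper's Lemma~\ref{lem:bstats}: an exact statement of how $\km{f}$ and $|f|$ change under one inverse step $\dt$, split according to which class $\Bset_j$ the image lands in. Your item (3) in particular is an extremal statement, and your sketch of it trails off mid-sentence (``equals the number of such ascents plus\dots''). The paper does not prove (3) by matching $k$-distinct subpartitions to subsequences of $w$ at all; it proves that $\km{f}=|\Irk(f)|$ (Proposition~\ref{prop:kmcount}, a short greedy argument) and then shows that $\km{\dt f}=\km{f}-1$ precisely when the current position is a \bascent{}, so that (3) follows by iteration with no extremal matching needed. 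Similarly, bijectivity is not obtained by ``reversing the peeling'' in the abstract but from the structural fact that $\dt$ is a $(k+1)$-to-$1$ surjection restricting to $\b_j^{-1}$ on each $\Bset_j$, which itself requires the nontrivial identity $\Irk(\b_{k+1}(f))=\Ilk(f)+k$. Until you commit to a specific rule and prove these two facts for it, none of items (1)--(3) is established, and the ``main obstacle'' you flag at the end is in fact the entire theorem.
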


In case $k=2$,   Theorem~\ref{thm:burge} asserts a correspondence between partitions $\l \in \Pset$ and words $w \in (2^*1)^*2$ under which $|\l|=\amaj(w)$, $\mu_2(\l)=\aasc(w)$, and  $\len{\l}$ counts  occurrences of 1 in $w$.  After swapping the roles of 1 and 2 (which transforms $\amaj \rightarrow \maj$ and $\aasc \rightarrow \des$), this is exactly the correspondence described by Burge in~\cite[\S3]{Bur-1}.
See also~\cite{AndBres} for a reinterpretation in terms of lattice paths.

We pause for a brief comment on notation.
Evidently we could replace $(\amaj, \aasc)$ with the more usual pair $(\maj, \des)$ by globally swapping symbols $i$ and $k+1-i$.   We have  chosen the present indexing  because it simplifies our later analysis.

The promised refinements of Theorems~\ref{thm:acz1} and~\ref{thm:acz2} are stated below. The reader can verify that~\eqref{eq:acz1} and~\eqref{eq:acz2} are recovered from~\eqref{eq:main1} and~\eqref{eq:main2} by setting  $y_i=y$ for all $i$.

\begin{thm}
\label{thm:main}
For $\l \in \Pset$ and $m \geq 1$ let $\pcnt{m}(\l)=(c_1,\ldots,c_{m})$, where $c_i$ is the number of parts of $\l$ congruent to $i$ modulo $m$. Then for $k \geq 2$ we have
\begin{align}
 		\sum_{\l \in \Pset} z^{\km{\l}} q^{|\l|} \vec{y}^{\pcnt{k-1}(\l)} 	&= \qf{z}{q^{k-1}}{\infty}\sum_{n \geq 0} 
%		\frac{z^s}{\qf{q^{k-1}}{q^{k-1}}{s} \qf{qu_1}{q}{s}\qf{q^2u_2}{q}{s} \cdots \qf{q^{k-1}}{q^{k-1}u_{k-1}}{s}}.
		\frac{z^n}{\qf{q^{k-1}}{q^{k-1}}{n} \prod_{i=1}^{k-1}\qf{y_i q^{i}}{q^{k-1}}{n}} \label{eq:main1} \\
		\shortintertext{and}
 		\sum_{\l \in \Dset} z^{\km{\l}} q^{|\l|} \vec{y}^{\pcnt{k}(\l)} 	&= \qf{z}{q^k}{\infty}\sum_{n \geq 0} 
		\frac{\prod_{i=1}^k \qf{-y_i q^i}{q^k}{n}}{\qf{q^k}{q^k}{n}} z^n,
		\label{eq:main2}
\end{align}
where $\vec{y}=(y_1,\ldots,y_{k-1})$ in the former and $\vec{y}=(y_1,\ldots,y_k)$ in the latter.
\end{thm}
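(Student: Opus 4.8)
The plan is to deduce Theorem~\ref{thm:main} from Theorems~\ref{thm:burge} and~\ref{thm:bburge} by transporting the sums over $\Pset$ and $\Dset$ to sums over the word sets $\words{k}$ and $\words{k+1}$, and then evaluating those word sums using MacMahon-type generating function identities for $(\maj,\des)$ (equivalently $(\amaj,\aasc)$ and $(\bmaj,\basc)$) over rearrangement classes. So the first step is bookkeeping: fix $k\ge 2$ and apply the bijection $\akburge$. If $w=\akburge(\l)$ has $a_i$ occurrences of the letter $i$, then by part~(1) of Theorem~\ref{thm:burge} the multiset of letter-multiplicities $(a_1,\dots,a_k)$ records exactly $\pcnt{k-1}(\l)$ — more precisely $c_i = a_i$ for $1\le i\le k-1$ with the convention that the class of parts $\equiv 0$ corresponds to the letter $k$ (one must check the indexing: parts congruent to $i$ mod $k-1$ for $1\le i<k$, so $c_{k-1}$ collects the multiples of $k-1$, matched with letter $k-1$, while letter $k$ is the terminal marker). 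Then parts~(2) and~(3) rewrite $q^{|\l|}$ and $z^{\km{\l}}$ as $q^{(k-1)\amaj(w)-\sum_{i=1}^{k-2}(k-1-i)a_i}$ and $z^{\aasc(w)}$. Thus the left side of~\eqref{eq:main1} becomes a sum over $w\in\words{k}$ of a monomial in $q,z,\vec y$ depending only on the descent statistics and letter counts of $w$.

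The second step is to organize the word sum by rearrangement class. Writing $w\in\words{k}$ as "(arbitrary word $u$ on $\{1,\dots,k\}$) followed by a terminal $k$," or more carefully grouping words by their content vector $(a_1,\dots,a_{k-1},a_k)$ with $a_k\ge 1$, the inner sum over all words with fixed content of $t^{\amaj(w)}$ (after the substitution $t=q^{k-1}$ absorbs the factor $k-1$) is precisely the $q$-multinomial coefficient / MacMahon's equidistribution result: $\sum t^{\amaj(w)}$ over rearrangements of a fixed multiset equals the appropriate Gaussian multinomial coefficient, and the $z^{\aasc(w)}$ refinement is handled by the Carlitz $q$-Eulerian style identity
\[
\sum_{r\ge 0} z^r \sum_{\substack{w \text{ content } (a_1,\dots,a_k)\\ \aasc(w)=r}} t^{\amaj(w)} = \text{(explicit ratio of $q$-Pochhammer symbols)},
\]
which after summing over all admissible content vectors telescopes into the right side of~\eqref{eq:main1}. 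The factor $\prod_{i=1}^{k-1}\qf{y_iq^i}{q^{k-1}}{n}$ in the denominator arises from summing a geometric-type series in each $y_i$ keeping track of the shift $q^{i}$ coming from the correction term $-\sum(k-1-i)a_i$ in part~(2); the $\qf{q^{k-1}}{q^{k-1}}{n}$ and the prefactor $\qf{z}{q^{k-1}}{\infty}$ come from the MacMahon/Carlitz closed form and reindexing $n=\aasc(w)$. For~\eqref{eq:main2} the same strategy applies using $\bkburge$, with two changes: the modulus $k-1$ is replaced by $k$, and because $w\in\words{k+1}$ uses $\b$-ascents (which also count level steps $w_i=w_{i+1}<k+1$), the relevant word-enumeration is the Clarke–Foata refinement of MacMahon that tracks $(\bmaj,\basc)$; its closed form produces the numerator $\prod_{i=1}^k\qf{-y_iq^i}{q^k}{n}$, the sign/$(-1)$-factors reflecting the "distinct parts" feature encoded by the level-step contribution.

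The main obstacle I expect is the precise matching of the MacMahon / Clarke–Foata word identities to the exact shape of the right-hand sides — in particular verifying that the exponent shifts $q^{i}$ on $y_i$ come out correctly from the correction terms $\sum_{i=1}^{k-2}(k-1-i)a_i$ and $\sum_{i=1}^{k-1}(k-i)a_i$ in parts~(2) of the two theorems, and that the terminal-$k$ condition defining $\words{k}$ and $\words{k+1}$ interacts correctly with the "ends with a descent/does not" hypotheses in the classical statistics. Concretely, one must be careful that $\aasc$ is the \emph{co}major-type statistic (counting ascents, not descents) so that the relevant generating function is the one for words \emph{not} ending in a descent, or equivalently apply the statistics to the reversed/complemented word as the paper's remark on notation suggests; getting this normalization right is where a sign error or an off-by-one in a Pochhammer index would creep in. Once the correct form of the word identity is pinned down, the remaining manipulations — substituting $t=q^{k-1}$ (resp.\ $q^k$), summing the geometric series in the $y_i$, and reindexing by $n$ — are routine $q$-series bookkeeping, and the reduction to~\eqref{eq:acz1},~\eqref{eq:acz2} by setting all $y_i=y$ follows from $\prod_{i=1}^{k-1}\qf{yq^i}{q^{k-1}}{n} = \qf{yq}{q}{(k-1)n}$ and the analogous product identity in the distinct-parts case.
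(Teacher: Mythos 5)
Your overall strategy is the same as the paper's: push the sums through $\akburge$ and $\bkburge$ using parts (1)--(3) of Theorems~\ref{thm:burge} and~\ref{thm:bburge}, then evaluate the resulting word sums with MacMahon's $(\maj,\des)$ identity and its Clarke--Foata $s$-descent refinement, followed by the substitutions $q\leftarrow q^{k-1}$ (resp.\ $q^k$) and $u\leftarrow y_i/q^{k-1-i}$ to absorb the correction terms. You also correctly identify that $\aasc$ and $\basc$ are $0$-descents and $k$-descents for the reversed alphabet, and that the distinct-parts case needs the Clarke--Foata version.

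However, there is a genuine gap at the step you wave past. The relevant word sum is over $\words{k}$, i.e.\ words ending in a \emph{single} $k$ (last letter $k$, penultimate letter not $k$), and within a fixed content vector this is a proper subset of the rearrangement class. Your assertion that ``the inner sum over all words with fixed content of $t^{\amaj(w)}$ is precisely the $q$-multinomial coefficient'' is true only for the full class $\Wset(\vec a)$; it is false for the restricted set, and neither \eqref{eq:distribution} nor \eqref{eq:macmahon} nor Theorem~\ref{thm:foata} applies to it as stated. This restriction is precisely the technical content of the paper's Corollary~\ref{cor:foata}: one lets the multiplicity $a_1$ of the smallest letter tend to infinity in the Clarke--Foata identity, observes that every word with $\maj_s w < a_1$ factors uniquely as a word in $\Wstar(\vec c)$ (ending in a single smallest letter) times a weakly increasing tail in the small letters, and divides out the resulting overcount $\prod_i(1-u_i)^{-1}$. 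Without this limiting/factorization argument (or some substitute, e.g.\ an inclusion--exclusion on the suffix), your ``telescoping'' claim is unsupported, and it is exactly where the prefactor $\qf{z}{q}{\infty}$ and the extra $\qf{q}{q}{n}$ in the denominator come from. The remaining bookkeeping in your write-up (content matching, the exponent shifts $q^i$ on $y_i$, and the specialization $y_i=y$ recovering \eqref{eq:acz1} and \eqref{eq:acz2}) is consistent with the paper.
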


The paper is organized as follows. We begin in Section~\ref{sec:encoding}  by constructing the  bijections $\akburge$ and $\bkburge$ described by Theorems~\ref{thm:burge} and~\ref{thm:bburge}.  We also introduce another bijection between partitions and $k$-ary words that is constructed in a similar manner but maps the size of a partition to the inversion number of the corresponding word. Section~\ref{sec:eulerian} reviews some classical ``$q$-enumeration'' and culminates in a proof of Theorem~\ref{thm:main} as a corollary of our bijections. 
The  remainder of the paper is more expository in nature.  In Section~\ref{sec:connections} we discuss the combinatorial connection between 2-measure and  Durfee squares that leads to Theorem~\ref{thm:abd}. Finally,  Section~\ref{sec:schur} describes how the encoding $\bkburge$ (Theorem~\ref{thm:bburge}) can be used to derive a partition identity of Schur and few variations.

%%%%%%%%%%%%%%%%%%%%%%%%%%%%%%%%%%%%
%%%%%%%%%%%%%%%%%%%%%%%%%%%%%%%%%%%%

\section{Partitions and $k$-ary Words}
\label{sec:encoding}

%In this section we shall explicitly construct the bijections $\akburge$ and $\bkburge$   described in Theorems~\ref{thm:burge} and~\ref{thm:bburge}.  We begin with some nomenclature that will underlie both constructions. 

\subsection{Frequency Lists and Related Notation}

Let $\Fset$ be the set of all finitely supported sequences of nonnegative integers.  We will work  with partitions $\l \in \Pset$ exclusively via their \emph{frequency lists}  $(f_1,f_2,\ldots,) \in \Fset$, where $f_i$ is the number of parts of $\l$ equal to $i$.   For example, $(5,3,3,1,1,1) \in \Pset$ corresponds with $(3,0,2,0,1,0,0,\ldots) \in \Fset$, and $\varepsilon \in \Pset$ corresponds with $\emptyf=(0,0,\ldots)$. 
This identification of $\Fset$ and $\Pset$ should be kept in mind throughout.

We extend notation from $\Pset$ to $\Fset$ in the obvious way, letting 
$$
	\textstyle
	|f| :=  |P(f)| = \sum_i if_i, \qquad\qquad
	\len{f} := \len{P(f)} = \sum_i f_i
$$
and $\km{f}:=\km{P(f)}$.   Thus $\km{f}$ is the maximum size of a set $I$ of positive integers such that $f_i > 0$ for all $i \in I$, and $|i-j| \geq k$ for all distinct $i,j \in I$.

\begin{convention} When working with generic $f \in \Fset$, we adopt the convention that $f_i=0$ for $i < 1$.  When working with specific $f \in \Fset$,  trailing zeroes will often be omitted; e.g. $(0,3,2,0,1) = (0,3,2,0,1,0,0,\ldots)$.
\end{convention}

Let $f \in \Fset$ and $k\geq 1$. Define integers $i_0 < i_1 < \cdots < i_m$ inductively as follows:  Begin with $i_0=-\infty$ and then, for $j \geq 1$, let $i_j$ be the least positive integer such that $i_j \geq i_{j-1} + k$ and $f_{i_j} > 0$, stopping when no such index exists.  Let $\Ilk(f) := \{i_1,\ldots,i_m\}$.  Thus the elements of $\Ilk(f)$ index the left ends of the $k$-tuples obtained by parsing $f$ from left-to-right and greedily selecting $(f_i,f_{i+1},\ldots,f_{i+k-1})$ with $f_i > 0$. We call the tuples $(i_j,\ldots,i_j+k-1)$ the \emph{forward clusters} of $f$.

Similarly define $\Irk(f) := \{i_1,\ldots,i_m\}$ by setting $i_0=\infty$ and letting $i_j$ be the greatest positive integer such that $i_j \leq i_{j-1}-k$ and $f_{i_j} > 0$, stopping when no such index exists.   Thus $i_1 > i_2 > \cdots > i_{m}$ index the right ends of the $k$-tuples identified by parsing $f$ from right-to-left and greedily selecting $(f_{i-k+1},\ldots,f_i)$ with $f_i > 0$.  The tuples $(i_j-k+1,\ldots,i_j)$ are  the \emph{backward clusters} of $f$.   Note that this construction relies on the convention that $f_j = 0$ for $j < 0$. 

\newcommand{\pos}[2]{\underset{\rule{0pt}{2ex}#1}{#2}}
\begin{exmp}
\label{exmp:scanning}
Let $f = (2,2,0,3,1,0,0,4,0,2,1,1) \in \Fset$. Then 
\begin{xalignat*}{2}
\Ilk[2](f)&=\{1,4,8,10,12\} & \Irk[2](f)&=\{2,5,8,10,12\} \\
\Ilk[3](f)&=\{1,4,8,11\} & \Irk[3](f)&=\{2,5,8,12\} \\
\Ilk[4](f)&=\{1,5,10\} & \Irk[4](f) &= \{4,8,12\}
\end{xalignat*}
The computation of $\Ilk[3](f)$ and $\Irk[3](f)$ are illustrated below, with forward/backward clusters indicated by arrows and the corresponding indices annotated below.
$$
(\Lpair{\pos{1}{2},2,0},\Lpair{\pos{4}{3},1,0},0,\Lpair{\pos{8}{4},0,2},\Lpair{\pos{11}{1},1,0},0,\ldots)
\qquad\quad
\Rpair{\ \ (2,\pos{2}{2}},\Rpair{0,3,\pos{5}{1}},\Rpair{0,0,\pos{8}{4}},0,\Rpair{2,1,\pos{12}{1}},0,\ldots)
$$
\end{exmp}

By construction it is clear that the elements of both $\Ilk(f)$ and $\Irk(f)$ differ pairwise by at least $k$.
 Moreover, if $f_i > 0$ then we have $l \leq i < l+k$ and $r-k < i \leq r$ for unique $l \in \Ilk(f)$ and $r \in \Irk(f)$.  That is, the index of every positive entry of $f$ lies within exactly one forward  and one backward cluster.  

\begin{prop}
\label{prop:kmcount}
For any $f \in \Fset$ we have $\km{f} = |\Ilk(f)| = |\Irk(f)|$.
\end{prop}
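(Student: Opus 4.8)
The plan is to prove the two equalities $\km{f}=|\Ilk(f)|$ and $\km{f}=|\Irk(f)|$ by showing that the greedy sets $\Ilk(f)$ and $\Irk(f)$ are themselves $k$-distinct subpartitions of maximal length, so that no longer one can exist.

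First I would dispense with the easy inequality. Both $\Ilk(f)$ and $\Irk(f)$ consist of indices $i$ with $f_i>0$ that differ pairwise by at least $k$, so each one is (the index set of) a $k$-distinct subpartition of $f$; by the definition of $\km{f}$ this immediately gives $\km{f}\geq|\Ilk(f)|$ and $\km{f}\geq|\Irk(f)|$. The content is the reverse inequality $\km{f}\leq|\Ilk(f)|$ (and similarly for $\Irk$): any set $I=\{j_1<\cdots<j_p\}$ of positive integers with $f_{j_t}>0$ and $j_{t+1}-j_t\geq k$ for all $t$ must satisfy $p\leq|\Ilk(f)|$.

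The key step is a comparison/exchange argument. Write $\Ilk(f)=\{i_1<\cdots<i_m\}$ as constructed. I would prove by induction on $t$ that $i_t\leq j_t$ for every $t\leq p$: the base case $t=1$ holds because $i_1$ is the \emph{least} positive index with $f_{i_1}>0$, hence $i_1\leq j_1$; for the inductive step, since $j_{t}\geq j_{t-1}+k\geq i_{t-1}+k$ and $f_{j_t}>0$, the index $j_t$ is a candidate in the greedy choice defining $i_t$, so the minimality of $i_t$ forces $i_t\leq j_t$. In particular $i_p\leq j_p$ is a legitimate positive index with $f_{i_p}>0$, so the greedy process does not stop before producing $i_p$; thus $m\geq p$, i.e. $|\Ilk(f)|\geq|I|$. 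Taking $I$ to realize $\km{f}$ gives $|\Ilk(f)|\geq\km{f}$, and combined with the easy direction, $\km{f}=|\Ilk(f)|$. The statement for $\Irk(f)$ follows by the mirror-image argument (replacing ``least'' by ``greatest'' and reversing all inequalities), or formally by applying the already-proved $\Ilk$ statement to a suitable reflection of $f$ — though since $f$ has finite support but no fixed right endpoint, it is cleaner to just repeat the symmetric argument directly.

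I expect the main (though modest) obstacle to be stating the exchange argument cleanly enough to handle the boundary cases: making sure the greedy process genuinely has not terminated at step $p$ (which is exactly what $i_p\leq j_p<\infty$ with $f_{i_p}>0$ guarantees), and being careful with the conventions $i_0=-\infty$ for $\Ilk$ and $i_0=+\infty$ for $\Irk$ so the $t=1$ base case is uniform with the rest. None of this requires computation; the whole proof is a short greedy-is-optimal argument.
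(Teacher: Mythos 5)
Your proof is correct. The easy inequality $\km{f}\geq|\Ilk(f)|$ is handled exactly as in the paper: the greedy set is itself ($k$-distinct and hence) a witness. For the harder inequality $\km{f}\leq|\Ilk(f)|$ you take a genuinely different route. The paper uses a pigeonhole argument: every index $s$ with $f_s>0$ lies in exactly one forward cluster, each cluster spans only $k$ consecutive indices, so a $k$-distinct sequence can place at most one element in each cluster, giving at most $|\Ilk(f)|$ elements in total. You instead run the classical greedy-stays-ahead induction, showing $i_t\leq j_t$ for every $t$, so the greedy process cannot terminate before matching the length of any competing $k$-distinct sequence. Both arguments are short and complete; the pigeonhole version is arguably more in the spirit of the rest of the paper (which repeatedly exploits the cluster structure, e.g.\ in Lemma~\ref{lem:stats}), while your induction is self-contained and makes the ``greedy is optimal'' mechanism explicit, which generalizes more readily to settings without a natural cluster decomposition. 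Your handling of the boundary conventions ($i_0=-\infty$ for $\Ilk$, $i_0=+\infty$ for $\Irk$) and of the termination issue ($i_p\leq j_p$ guarantees the greedy process has not stopped) is exactly the care the argument needs.
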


\begin{proof}
Suppose $s_1 < s_2 < \cdots < s_r$ satisfies $f_{s_j}>0$  and $s_{j+1}-s_j \geq k$ for all $j$.  As noted above, each $s_j$ is contained in a unique forward cluster, and since the clusters are of length $k$ none can contain more than one $s_j$.  So we have $r \leq |\Ilk(f)|$ and thus $\km{f} \leq |\Ilk(f)|$. But if $\Ilk(f) = \{i_1 < i_2 < \cdots < i_m\}$ then  $f_{i_j} > 0$ and $i_{j+1}-i_j \leq k$ for all $j$, and thus $\km{f} \geq |\Ilk(f)|$.  So $\km{f}=|\Ilk(f)|$. The treatment of $\Irk(f)$ is identical. 
\end{proof}

Finally, for integer $j$ we define the promotion and demotion operators $\up{j}$ and $\down{j}$ on $\Fset$ as follows:   
$$
%\downup{s}{t}(f)_i :=
%\begin{cases}
%f_i-1 &\text{if $i=s$} \\
%\label{eq:updown}
%f_i+1 &\text{if $i=t$} \\
%f_i &\text{otherwise}.
%\end{cases},
\up{j}(f)_i :=
\begin{cases}
f_i+1 &\text{if $i=j$} \\
f_i &\text{otherwise}.
\end{cases},
\qquad\quad
\down{j}(f)_i :=
\begin{cases}
f_i-1 &\text{if $i=j$} \\
f_i &\text{otherwise}.
\end{cases}
$$
Note that both $\up{j}$ and $\down{j}$ act as the identity on $\Fset$ when $j \leq 0$. % < 1$.  decrements the value of $f$ at position $s$ and increments the value of $f$ at position $t$, whenever these indices are positive.  That is,
%For example $\updown{5}{2}(1,1,3,0,5,1) = (1,0,3,0,6,1)$ and $\updown{-1}{3}(1,0,3,0,6,1)=(1,0,2,0,6,1)$.

\subsection{A Generalized Burge Encoding}
\label{sec:burgeconst}

We shall now describe a bijection between partitions and  words on an alphabet of size $k \geq 2$. This is a natural generalization of the correspondence between partitions and binary words described by Burge in~\cite[Section 3]{Bur-1}. 

\begin{convention}
Throughout this section,  unless otherwise noted, $k$ denotes a \emph{fixed} positive integer at least 2.
All constructions that follow depend on $k$, but we shall often leave this dependence implicit.
%Unless otherwise specified, $k$ denotes a fixed positive integer at least 2.
\end{convention}

We begin by defining some fundamental mappings on $\Fset$. %(Examples are provided below.)
%$\map{\a_1,\ldots,\astar}{\Fset}{\Fset}$.  
First let $\map{\astar}{\Fset}{\Fset}$ and  $\map{\pt}{\Fset}{\Fset}$  be given by
$$
	\astar(f) := \Big(\prod_{i \in \Ilk(f)} \down{i}\up{i+k-1} \Big)(f),
		\qquad\qquad
	\pt(f) :=\Big(\prod_{i \in \Irk(f)} \up{i-k+1}\down{i}\Big)(f),
$$ 
In other words, $\astar$ performs the transformation
$$
	(f_i,\ldots,f_{i+k-1}) \mapsto (f_i-1,f_{i+1},\ldots,f_{i+k-2},f_{i+k-1}+1).
$$
for each forward cluster $(i,\ldots,i+k-1)$ of $f$, while $\pt$ performs the reverse transformation 
$$
	(f_{i-k+1},\ldots,f_{i}) \mapsto (f_{i-k+1}+1,f_{i-k},\ldots,f_{i-1},f_{i}-1)
$$
for each backward cluster.  Now  define $\map{\a_j}{\Fset}{\Fset}$ for $1 \leq j < k$ by
\begin{align*} 
  \a_j(f) &=
    \up{j}(f_1,\ldots,f_{j}, \astar(f_{j+1},f_{j+2},\ldots) ) \\
  &=(f_1,\ldots,f_{j-1},f_{j}+1,\astar(f_{j+1},f_{j+2},\ldots) ).
\end{align*}

It is clear from the definitions that the forward clusters of $f$ are precisely the backward clusters of $\astar(f)$. From this it follows that $\pt$ is a left-inverse for each $\a_j$.   The full extent of this inverse relationship will be made precise below.  

We emphasize that all of these mappings depend on the ambient  parameter $k$. For instance, $\a_3$ is undefined when $k=2$ and has very different meanings for $k=3$ versus $k=4$. As such we should rightly use notation such as $\a^{(k)}_j$  and $\pt^{(k)}$, but have avoided this practice because it becomes unwieldy.

\begin{exmp}
\label{exmp:kburge}
Suppose $k=3$ and let $f = (2,2,1,3,1,0,0,4,0,2,0,1)$, so that $\Ilk[3](f)=\{1,4,8,12\}$ and $\Irk[3](f)=\{2, 5, 8, 12\}$.  The computation of $\a_j(f)$ and $\pt(f)$ is illustrated below.  The arrows arch over the clusters indexed by $\Ilk[3](f)$ and $\Irk[3](f)$, and one ``unit'' is pushed along each arrow from tail to tip. 
\begin{align*}
	\a_3(f)
&=\a_3(\Lpair{2,2,1},\Lpair{3,1,0},0,\Lpair{4,0,2},0, \Lpair{1,0,0}) \\
&=(1,2,2,2,1,1,0,3,0,3,0,0,0,1)  \\[1.5ex]
	\a_1(f) &= (2+1,\a_3(\Lpair{2,1,3},\Lpair{1,0,0},\Lpair{4,0,2},0,\Lpair{1,0,0})) \\
	 &= (3, 1,1,4,0,0,1,3,0,3,0,0,0,1) \\[1.5ex]
	\a_2(f) &= (2,2+1,\a_3(\Lpair{1,3,1},0,0,\Lpair{4,0,2},0,\Lpair{1,0,0})) \\
	 &= (2,3,0,3,2,0,0,3,0,3,0,0,0,1) \\[2ex] 
	 \pt(f)&=
	\Rpair{\pt(2,2},\Rpair{1,3,1},   		
	\Rpair{0,0,4},0,\Rpair{2,0,1})  \\
	&= (2,1,2,3,0,1,0,3,0,3).
\end{align*}
The reader can verify that $\a_j(f) \in \Aset_j$  and $\pt(\a_j(f))=f$ for each $j \in \{1,2,3\}$, whereas $\a_j(\pt(f))=f$ only for $j=2$.  
\end{exmp}

\begin{defn}
For $1 \leq j < k$, let $\Aset_j$ be the set of all $f \in \Fset$ such that $j \in  \Irk(f)$ (and therefore $j=\min \Irk(f)$). Note that these sets are disjoint.  Let $\Astar$ be the complement of 
$\bigcup_{j=1}^{k-1} \Aset_{j}$ in $\Fset$. Thus $\Astar$ contains $f=\emptyf$ along with all $f$ such that $\min \Irk(f) \geq k$.
\end{defn}

\begin{prop}
\label{prop:bijections}
For $1 \leq j \leq k$, the map $\a_j$ defined above is a bijection from $\Fset$ to $\Aset_j$.   The map $\pt$ is a $k$-to-$1$ surjection from $\Fset$ to $\Fset$ that restricts to $\a_j^{-1}$ on $\Aset_j$.
\end{prop}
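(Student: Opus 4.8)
The plan is to reduce the whole statement to a single structural lemma about how $\astar$ and $\pt$ act on clusters, and then to bootstrap the general $\a_j$ from it via a ``tail'' decomposition.

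\textbf{Step 1: the cluster--interchange lemma.} I would first prove that for every $f\in\Fset$,
$$
  \Irk\bigl(\astar(f)\bigr)=\{\,i+k-1 : i\in\Ilk(f)\,\},
$$
and, dually, that whenever $\Irk(f)$ has no element smaller than $k$ (i.e. $f\in\Astar$),
$$
  \Ilk\bigl(\pt(f)\bigr)=\{\,i-k+1 : i\in\Irk(f)\,\}.
$$
Both follow by analyzing the greedy parse. For the first: $\astar$ alters $f$ only inside its forward clusters $(i,\dots,i+k-1)$, decrementing $f_i$ and incrementing $f_{i+k-1}$; thus every position $i+k-1$ becomes positive, while positions lying strictly between consecutive forward clusters were, and remain, $0$. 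A short induction on the right-to-left greedy scan then shows it recovers exactly the intervals $(i,\dots,i+k-1)$. The dual statement is the same argument run left-to-right, the hypothesis $f\in\Astar$ being exactly what ensures no backward cluster of $f$ overhangs position $1$. Since within a matched pair of clusters $\astar$ moves one unit from $i$ forward to $i+k-1$ while $\pt$ moves one unit from $i+k-1$ back to $i$ --- and no unit is lost, as $i\ge 1$ --- the first identity yields $\pt(\astar(f))=f$ for all $f$, and the second yields $\astar(\pt(f))=f$ for all $f\in\Astar$. Finally, the first identity shows every element of $\Irk(\astar(f))$ is $\ge k$, so $\astar(f)\in\Astar$; hence $\astar\colon\Fset\to\Astar$ is a bijection with inverse $\pt|_{\Astar}$, which is the case $j=k$ of the proposition.

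\textbf{Step 2: the maps $\a_j$ for $1\le j<k$.} Here I would use the decomposition $\a_j(f)=(f_1,\dots,f_{j-1},\,f_j+1,\,\astar(f_{j+1},f_{j+2},\dots))$ together with the key inequality $j-k+1\le 0$. Running the right-to-left greedy scan on $\a_j(f)$: applying Step 1 to the tail $(f_{j+1},f_{j+2},\dots)$ shows its backward clusters are exactly those of $\astar(\text{tail})$, all living in positions $\ge j+1$; once these are used up the scan reaches position $j$ (positive, being $f_j+1$), forms the overhanging cluster $(j-k+1,\dots,j)$, and then stops, there being no positive position $\le j-k$. Hence $\min\Irk(\a_j(f))=j$, i.e. $\a_j(f)\in\Aset_j$. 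Applying $\pt$ to $\a_j(f)$ now deletes the overhanging unit (returning position $j$ to $f_j$) and, by Step 1 on the tail, undoes $\astar$ there, so $\pt(\a_j(f))=f$. Conversely, if $f\in\Aset_j$ then its leftmost backward cluster is $(j-k+1,\dots,j)$ and all others lie in positions $\ge j+1$; writing $\pt(f)=(f_1,\dots,f_{j-1},f_j-1,\pt(\text{tail}))$ with $\text{tail}=(f_{j+1},f_{j+2},\dots)$ and observing that $\text{tail}\in\Astar$ (its backward clusters end at positions $\ge j+k$, hence $\ge k$ after re-indexing), Step 1 gives $\astar(\pt(\text{tail}))=\text{tail}$, whence $\a_j(\pt(f))=f$. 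Thus $\a_j\colon\Fset\to\Aset_j$ is a bijection with inverse $\pt|_{\Aset_j}$.

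\textbf{Step 3 and the main obstacle.} Since $\Aset_1,\dots,\Aset_{k-1},\Astar$ are pairwise disjoint with union $\Fset$, and by Steps 1--2 each of these $k$ sets is carried bijectively onto $\Fset$ by $\pt$, every $g\in\Fset$ has exactly one $\pt$-preimage in each of them --- namely $\a_1(g),\dots,\a_{k-1}(g),\astar(g)$ --- so $\pt$ is a surjective $k$-to-$1$ map restricting to $\a_j^{-1}$ on $\Aset_j$ for every $j$. The only genuine work is Step 1: one must carefully track which entries become or stay positive after the unit-shifts in order to certify that the greedy scans select exactly the intended intervals, and one must keep the left-boundary behavior straight. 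Indeed, it is precisely the contrast between $i\ge 1$ (no unit is ever lost off the left under $\astar$) and $j-k+1\le 0$ (a unit is always lost under $\pt$ on $\Aset_j$ for $j<k$) that forces $\astar$ and the $\a_j$ to land in different blocks of $\Fset$.
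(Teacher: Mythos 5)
Your proposal is correct and follows essentially the same route as the paper: it rests on the cluster-interchange identity $\Irk(\astar(f))=\Ilk(f)+k-1$ (the paper's equation for $\Irk(\astar(f))$), handles $j=k$ first, then treats $1\le j<k$ via the head/tail decomposition of $\a_j(f)$ and the computation $\min\Irk(\a_j(f))=j$, and concludes $k$-to-$1$ surjectivity from the partition of $\Fset$ into $\Aset_1,\ldots,\Aset_{k-1},\Astar$. The only difference is presentational: you spell out the greedy-scan verification of the interchange lemma and the boundary behaviour at $j-k+1\le 0$, which the paper treats as clear from the definitions.
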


\begin{proof}

As noted above, the forward clusters of $f$ are  the backward clusters of $\astar(f)$.  That is,  
\begin{align}
\label{eq:irk_astar}
	\Irk(\astar(f))=\Ilk(f)+k-1,
\end{align}
where $S+j := \{s + j \,:\, s \in S\}$ denotes the translation of set $S$ by $j$. Therefore $\astar(f) \in \Astar$ and $\pt(\astar(f))=f$ for all $f$.  Similarly, if $f \in \Astar$ then the forward clusters of $\pt(f)$ match the backward clusters of $f$, and thus $\astar(\pt(f))=f$. 

Now suppose $1 \leq j < k$.  Then
\begin{align}
\Irk(\a_j(f)) &=
\Irk(f_1,\ldots,f_{j-1},f_j+1,0,0,\ldots) \cup \Irk(0,\ldots,0,\astar(f_{j+1},f_{j+2},\ldots)) \notag \\
&= \{j\} \cup (\Irk(\astar(f_{j+1},f_{j+2},\ldots))+j) \label{eq:irk_aj} \\
&= \{j\} \cup (\Ilk(f_{j+1},f_{j+2},\ldots)+k-1+j). \notag
\end{align}
Therefore $\a_j(f) \in \Aset_j$ for all $f \in \Fset$ and we have
\begin{align*}
\pt(\a_j(f)) &= \pt(f_1,\ldots,f_{j-1},f_j+1,\astar(f_{j+1},f_{j+2},\ldots)) 
\\
&= (f_1,\ldots,f_{j-1},f_j,\pt(\astar(f_{j+1},f_{j+2},\ldots))) \\
&= f.
\end{align*}
Conversely if $f \in \Aset_j$ then $j = \min \Irk(f)$ and  $(f_{j+1},f_{j+2},\ldots) \in \Astar$, which implies
\begin{align*}
\a_j(\pt(f))
&=\a_j(f_1,\ldots,f_{j-1},f_j-1,\pt(f_{j+1},f_{j+2},\ldots)) \\
&=(f_1,\ldots,f_j,\a_k(\pt(f_{j+1},f_{j+2},\ldots))) \\
&=f.
\end{align*}
\end{proof}

From hereon we shall simplify notation and write $\pt f$ in place of $\pt(f)$.  From the definition of $\pt$ it is clear that $0 \leq |\pt f| < |f|$ for $f \neq \emptyf$.  Thus for any $f$ there exists a least positive integer $r$  such that $\pti{f}{r-1} = \emptyf$. Together with Proposition~\ref{prop:bijections} this permits a canonical decomposition of elements of $\Fset$.

\begin{defn}
Let $f \in \Fset$ and let $r$ be the least positive integer such that $\pti{f}{r-1}=\emptyf$. 
We define the \emph{\akcode{k}} of $f$ to be the word $\akburge(f) := \w_1\cdots\w_{r}$ on symbols $\{1,2,\ldots,k\}$ determined by the rule $\w_{i} = j \Leftrightarrow \pti{f}{i-1} \in \Aset_j$.
\end{defn}

\begin{exmp}
\label{exmp:burge}
Let $f=(1,2,1,0,0,1,0,0,0,0,1)$.  The computations of $\akburge[3](f)=312232113$ and $\akburge[4](f)=2313224$ are illustrated below, at left and right, respectively.  At each nonterminal step, $\w_{i}$ is determined from the least element of $\Irk(\pti{f}{i-1})$.  
%The end result is . 
%$$
%
%In contrast, the \akcode{4} of is $\aburge{4}(f) = 2313224$, as determined below.
$$
\begin{array}[t]{c|l|l|l}
i & \pti{f}{i-1} & \Irk[3](\pti{f}{i-1}) & \w_{i} \\
\hline
1  & (1,2,1,0,0,1,0,0,0,0,1) & \{3,6,11\} & 3 \\
2  & (2,2,0,1,0,0,0,0,1) & \{{1},4,9\} & 1 \\
3 & (1,3,0,0,0,0,1) & \{2,7\} & 2 \\
4 & (1,2,0,0,1) & \{2,5\} & 2 \\
5 & (1,1,1) & \{3\} & 3 \\
6 & (2,1,0) & \{2\} & 2 \\
7 & (2) & \{1\} & 1 \\
8 & (1) & \{1\} & 1 \\
9 & \emptyf &  & 3
\end{array}
\qquad
\begin{array}[t]{c|l|l|l}
i & \pti{f}{i-1} & \Irk[4](\pti{f}{i-1}) & \w_{i} \\
\hline
1  &(1,2,1,0,0,1,0,0,0,0,1) & \{2,6,11\} & 2 \\
2  &(1,1,2,0,0,0,0,1) & \{3,8\} & 3 \\
3 & (1,1,1,0,1) & \{1,5\} & 1 \\
4 & (0,2,1) & \{3\} & 3 \\
5 & (0,2) & \{2\} & 2 \\
6 & (0,1) & \{2\} & 2 \\
7 & \emptyf &  & 4
\end{array}
$$

\end{exmp}

Let $e^j =(0,\ldots,0,1,0,\ldots) \in \Fset$ be the sequence with a 1 in the $j$-th position and zeroes elsewhere.
From the definition of $\pt$ it is clear that $\pt f = \emptyf$ if and only if either $f = \emptyf$ or $f=e^j$ for $1 \leq j < k$. Since $\emptyf \in \Astar$ and $e^j \in \Aset_j$, it follows that either $\akburge(f)=k$, in which case $f=\emptyf$,  or the final two symbols of $\akburge(f)$ are $jk$ for some $j \neq k$.  In particular, $\akburge(f)$ always ends in a single $k$.

%\begin{defn}
%Let $\words{k}$ denote the set of  words on $\{1,\ldots,k\}$ that end with a single $k$.
%\end{defn}

We have just observed that the encoding $f \mapsto \akburge(f)$ maps $\Fset$ into $\words{k}$, the set of words on $\{1,\ldots,k\}$ that end in a single $k$. From Proposition~\ref{prop:bijections} it is easy to see that this is in fact a bijection.  Indeed, since $\pt$ restricts to $\a_j^{-1}$ on $\Aset_j$, the construction of $\akburge(f)$   ensures that $\pti{f}{i-1}= (\a_{\w_i} \circ \pt)(\pti{f}{i-1})=\a_{\w_i}(\pti{f}{i})$ for all $i$. We therefore have
$$
f =\a_{\w_1}(\pti{f}{1})=\a_{\w_1}\a_{\w_2}(\pti{f}{2})=\a_{\w_1}\a_{\w_2}\a_{\w_3}(\pti{f}{3})=\cdots=(\a_{\w_1}\a_{\w_2}\cdots\a_{\w_r})(0),
$$
or, more succinctly,  $f=\a_{\w}(0)$.
Observe that this identity can be unambiguously unwound to determine all the $\w_i$, since $f=\a_j(g)$  implies  $f \in \Aset_j$ and $\pt(f)=g$.   Thus the maps $f \mapsto \akburge(f)$ and $w \mapsto \a_w(0)$ are mutually inverse. The right-to-left reading of $\akburge(f)$ specifies the unique manner in which $f$ can be constructed from $\emptyf$ by iterated applications of the maps $\{\a_1,\ldots,\a_k\}$,  beginning with a single application of $\a_k$. (Note that the first application of $\a_k$ is inert.)

\begin{exmp}
Consider the \akcode{3}  $\w=131223$. The corresponding frequency list $f=\a_{\w}(0)$ can be found as follows: 
$$
\emptyf \overset{\a_3}{\mapsto} 
\emptyf \overset{\a_2}{\mapsto} 
(0,1) \overset{\a_2}{\mapsto} 
(0,2) \overset{\a_1}{\mapsto} 
(1,1,0,1) \overset{\a_3}{\mapsto}
(0,1,1,0,0,1)  \overset{\a_1}{\mapsto} (1,0,1,1,0,0,0,1)=f.$$
\end{exmp}

Let us now consider the behaviour of the statistics  $\km{\cdot}$ and $|\cdot|$ under the action of $\pt$, with the goal of mapping them through $\aburge{k}$  to natural statistics on $\words{k}$.

\begin{lem}
\label{lem:stats}
Suppose $f \in \Aset_i$. Then
\begin{enumerate}
%\item	$\displaystyle 
%		\len{\pt f} = \begin{cases} 
%						\len{f} &\text{if $i=k$} \\ 
%						\len{f}-1 &\text{otherwise}. 
%					 \end{cases}$
\item	$\displaystyle
		\km{\pt f} = \begin{cases}
		\km{f}-1 &\text{if $\pt f \in \Aset_j$ with $i < j$} \\
		\km{f} &\text{otherwise}.
		\end{cases}
		$
\item	$\displaystyle
		|\pt f| = 
		\begin{cases}
		|f|-(k-1)\km{f} &\text{if $i=k$} \\
		|f|-(k-1)\km{f}+ (k-1 -i) &\text{otherwise}. \\
		\end{cases}
		$
\end{enumerate}
\end{lem}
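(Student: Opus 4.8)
The plan is to analyze the two statistics separately by unwinding the definition of $\pt$ in terms of backward clusters. Fix $f \in \Aset_i$, so $i = \min\Irk(f)$, and write $\Irk(f) = \{r_1 < r_2 < \cdots < r_m\}$ with $r_1 = i$; by Proposition~\ref{prop:kmcount} we have $m = \km f$. Recall that $\pt f$ is obtained by applying, for each backward cluster $(r_\ell - k + 1, \ldots, r_\ell)$, the transformation $(f_{r_\ell-k+1}, \ldots, f_{r_\ell}) \mapsto (f_{r_\ell-k+1}+1, f_{r_\ell-k+2}, \ldots, f_{r_\ell-1}, f_{r_\ell}-1)$, i.e. one unit is moved from position $r_\ell$ down to position $r_\ell - k + 1$. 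Since distinct backward clusters are pairwise separated by at least $k$, these moves do not interfere with one another.

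For part (2), I would simply track $|\cdot| = \sum_i i f_i$ through each cluster move: shifting a unit from position $r_\ell$ to position $r_\ell - k + 1$ decreases $|f|$ by $k-1$, except when $r_\ell - k + 1 \leq 0$, in which case that unit is annihilated (by the convention $f_j = 0$ for $j<0$ and the fact that $\up{j}$, $\down{j}$ act as the identity for $j \leq 0$) and the contribution $r_\ell f_{r_\ell}$ worth $r_\ell$ is simply lost. The only cluster that can have $r_\ell - k + 1 \leq 0$ is the leftmost one, $r_1 = i$, and this happens precisely when $i \leq k-1$, i.e. $i \neq k$ (since always $i \leq k$ when... well, when $i \in \Irk(f)$ with $i$ minimal, $i$ can be anything $\geq 1$; but $f \in \Aset_i$ forces $i < k$ or $f \in \Astar$ with $i \geq k$ — here $f \in \Aset_i$ means $i \in \{1,\ldots,k-1\}$ or the $i=k$ case corresponds to $f \in \Astar$). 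So when $i = k$ no unit is lost and $|\pt f| = |f| - (k-1)m = |f| - (k-1)\km f$; when $i < k$, the leftmost move costs $i$ instead of $k-1$, giving an extra $+(k-1-i)$ relative to the naive count, i.e. $|\pt f| = |f| - (k-1)\km f + (k-1-i)$. This matches the claimed formula; the bookkeeping is routine.

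For part (1), the key fact from the proof of Proposition~\ref{prop:bijections} is equation~\eqref{eq:irk_astar}/\eqref{eq:irk_aj}-style reasoning run in reverse: since $f \in \Astar \cup \bigcup\Aset_i$ and $\pt$ inverts $\a_j$ on $\Aset_j$, we have $f = \a_i(\pt f)$, so $\Irk(f) = \{i\} \cup (\Ilk(\pt f | _{>i}) + \text{shift})$ — more usefully, $\pt f$ has forward clusters matching the backward clusters of $f$ shifted by $-(k-1)$, hence $\Ilk(\pt f) = \Irk(f) - (k-1) = \{r_1 - (k-1), \ldots, r_m - (k-1)\}$, intersected with the positive integers. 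Thus $\km{\pt f} = |\Ilk(\pt f)|$ equals $m$ if $r_1 - (k-1) \geq 1$, i.e. $r_1 = i \geq k$, i.e. $i = k$; and equals $m - 1$ if $i < k$, since then exactly the leftmost forward cluster index $r_1 - (k-1)$ is non-positive and drops out. Now I must reconcile "$i = k$" versus "$\pt f \in \Aset_j$ with $i < j$": I would argue that $\min\Irk(\pt f) = \min\Ilk(\pt f)$... no — rather, one checks directly that when $i = k$, the leftmost forward cluster of $\pt f$ starts at $r_1 - (k-1) \geq 1$ but could still be pushed; the cleanest route is: $\km{\pt f} = \km f$ iff the unit removed from the leftmost backward cluster of $f$ survives, iff $i \geq k$, iff $i = k$, and separately $i = k$ is equivalent to "$\pt f \notin \Aset_j$ for any $j > i$" because $\min\Irk(\pt f) = i - (k-1) \cdot[\text{something}]$ — here the main obstacle is pinning down $\min\Irk(\pt f)$ precisely and showing the dichotomy "$i=k$" $\Leftrightarrow$ "not ($\pt f \in \Aset_j$ with $i < j$)" is exactly right (one must rule out $\pt f \in \Aset_j$ with $j \leq i$ landing in the "$\km f$" branch, and confirm $\pt f = \emptyf$, which lies in $\Astar$, is handled correctly). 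I would resolve this by carefully computing $\Irk(\pt f)$ from the fact that $\pt f$'s backward clusters are read off by the greedy right-to-left scan, using that the support of $\pt f$ near the bottom is controlled by where the leftmost unit landed ($i - k + 1$, possibly non-positive).

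In summary: the heart of the argument is the clean identity $\Ilk(\pt f) = (\Irk(f) - (k-1)) \cap \mathbb{Z}_{>0}$, which follows from cluster-matching already established in Proposition~\ref{prop:bijections}; everything else is careful case analysis on whether $i = k$. The step I expect to be fiddliest is part (1), specifically translating the size-drop condition into the stated membership condition "$\pt f \in \Aset_j$ with $i < j$" and handling the edge case $\pt f = \emptyf$ — I would double-check it against the worked computations in Example~\ref{exmp:burge}, where each column records $\Irk(\pti f {i-1})$ and its minimum.
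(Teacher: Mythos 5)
Your treatment of part (2) is fine: tracking the weight through the $\km{f}$ cluster moves, with the leftmost unit annihilated (costing $i$ rather than $k-1$) exactly when $i<k$, is a correct and slightly more direct bookkeeping than the paper's (which instead writes $f=\a_i(g)$ and computes $|f|$ from $|g|$). But part (1) rests on a false identity, and the dichotomy you extract from it contradicts the very statement you are proving. The claim $\Ilk(\pt f) = (\Irk(f)-(k-1))\cap\mathbb{Z}_{>0}$ holds only when $f\in\Astar$ (this is the content of $\astar(\pt f)=f$ for such $f$); it fails for $f\in\Aset_i$ with $i<k$. Decisive counterexample: $k=2$, $f=(2)$ (the partition $(1,1)$). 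Here $\Irk(f)=\{1\}$, so $f\in\Aset_1$ and $\km{f}=1$; also $\pt f=(1)$, so $\km{\pt f}=1=\km{f}$, consistent with the lemma since $\pt f\in\Aset_1$ with $j=1=i$. Your identity would give $\Ilk(\pt f)=(\{1\}-1)\cap\mathbb{Z}_{>0}=\emptyset$ and hence $\km{\pt f}=0$. The error is that $\pt$ removes only \emph{one} unit from position $r_1=i$; the remaining units at positions $\leq i$ (and more generally the entries $g_1,\ldots,g_i$ of $g=\pt f$) can still seed a cluster of $g$, which is precisely when the ``otherwise'' branch $\km{\pt f}=\km{f}$ occurs with $i<k$. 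Your proposed ``cleanest route'' ($\km{\pt f}=\km{f}$ iff $i=k$) is therefore not a gap to be filled but a wrong statement. (Even in Example~\ref{exmp:kburge}, where $f\in\Aset_2$ and $\Irk[3](f)=\{2,5,8,12\}$, one computes $\Ilk[3](\pt f)=\{1,4,8\}$, not $\{3,6,10\}$; the cardinalities only coincidentally agree there.)

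The repair is essentially the paper's argument. Write $g=\pt f$, so $f=\a_i(g)$. For $i<k$ the structure of $\a_i$ gives $\Irk(f)=\{i\}\cup(\Irk(\astar(g_{i+1},g_{i+2},\ldots))+i)$, hence $\km{f}=1+\km{g_{i+1},g_{i+2},\ldots}$. One then compares $\km{g}$ with $\km{g_{i+1},g_{i+2},\ldots}$ by cases on $j=\min\Irk(g)$: if $j\leq i$ the greedy right-to-left scan of $g$ picks up one extra cluster ending at $j$, so $\km{g}=\km{g_{i+1},\ldots}+1=\km{f}$; if $j>i$ no cluster of $g$ ends in positions $1,\ldots,i$ and $\km{g}=\km{g_{i+1},\ldots}=\km{f}-1$. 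This is exactly the membership condition ``$\pt f\in\Aset_j$ with $i<j$'' in the statement, and it also handles $\pt f=\emptyf\in\Astar$ correctly. The case $i=k$ is the one place your cluster-matching identity is valid, giving $\km{\pt f}=\km{f}$ as claimed.
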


\begin{proof}
%Observe that the sum $f_1+f_2+\cdots=\len{f}$ is invariant under $\up{j-k+1}\down{j}$ provided $j\geq k$, and otherwise it is decreased by 1. Claim 1 then follows from the definition of $\pt$.

%We now prove Claims 2 and 3. 
For brevity let $g=\pt f$ and note that $f=\a_i(g)$ since $f \in \Aset_i$.  

First consider the case $i=k$.  Here~\eqref{eq:irk_astar} gives $\Irk(f) = \Ilk(g)+k-1$, and so Proposition~\ref{prop:kmcount} yields $\km{g} = \km{f}$.
Moreover $|f|=|g| + (k-1)\km{g}$, since $\astar$ transforms $g$ into $f$ by promoting $|\Irk(g)|=\km{g}=\km{f}$  clusters, and each promotion increases $|g|$ by $k-1$.  This completes the proof of Claims 1 and 2 in case $i=k$.

Now suppose $1 \leq i < k$. In this case~\eqref{eq:irk_aj}  gives
$$
	\Irk(f) = \{i\} \cup (\Irk(g_{i+1},g_{i+2},\ldots)+i)
$$
and therefore 
\begin{equation}
\label{eq:kmid}
	\km{f}=1+\km{g_{i+1},g_{i+2},\ldots}.
\end{equation}
Observe that since $g \in \Aset_j$ we have
\begin{align}
 i < j < k &\implies
 \Irk(g) = \Irk(g_{i+1},g_{i+2},\ldots)+i \notag \\
 &\implies
 \km{g}=\km{g_{i+1},g_{i+2},\ldots}, \label{eq:kmid1}
\shortintertext{and}
 j \leq i < k &\implies
 \Irk(g) = \{j\} \cup (\Irk(g_{i+1},g_{i+2},\ldots)+i) \notag \\
&\implies
 \km{g}=\km{g_{i+1},g_{i+2},\ldots}+1. \label{eq:kmid2}
\end{align}
Furthermore, by definition of $\a_i$ we have 
\begin{equation}
\label{eq:kmid3}
|f|=|g| + i + (k-1)\km{g_{i+1},g_{i+2},\ldots}.
\end{equation} 
Identities \eqref{eq:kmid}--\eqref{eq:kmid3} together establish Claims 2 and 3 in case $i < k$. 
\end{proof}

We are now prepared to  prove the main result of this section, which obviously implies Theorem~\ref{thm:burge} when translated from $\Fset$ to $\Pset$. 
As noted in the Introduction, the case $k=2$ was described by Burge~\cite[Section 3]{Bur-1}.

\begin{thm}
\label{thm:Fburge}
The encoding $f \mapsto \akburge(f)$ is a bijection from $\Fset$ to $\words{k}$.  Moreover, if $\w = \akburge(f)$ and  $a_i$ is the number of occurrences of $i$ in $\w$, then:
\begin{enumerate}
\item	$a_i = \sum_{j \equiv i \pmod{k-1}} f_j$, for $1 \leq i < k$
\item	$\len{f} = a_1 + \cdots + a_{k-1}$
\item	$\km{f} = \aasc(\w)$
\item	$|f| =(k-1)\amaj(\w) - \sum_{i=1}^{k-1} (k-1-i)a_i$
\end{enumerate}
\end{thm}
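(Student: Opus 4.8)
The plan is to track how the four quantities in the statement transform under a single application of $\pt$, and then sum the contributions over all $r$ steps of the iteration $f = \pti{f}{0}, \pti{f}{1}, \ldots, \pti{f}{r-1} = \emptyf$. We already know from the discussion preceding the theorem that $f \mapsto \akburge(f)$ is a bijection from $\Fset$ onto $\words{k}$, so only the four statistical identities (1)--(4) require proof. Write $\w = \w_1\cdots\w_r = \akburge(f)$, so that by construction $\pti{f}{i-1} \in \Aset_{\w_i}$ and $\pt(\pti{f}{i-1}) = \pti{f}{i}$ for each $i$.

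For (1) and (2): the key observation is the effect of $\a_j$ on the residue-counting vector. Applying $\a_j$ to a frequency list increments $f_j$ by one (contributing a part $\equiv j \pmod{k-1}$, since $1 \le j < k$) and applies $\astar$ to the tail, which by construction moves units of mass from position $i$ to position $i+k-1$ — a shift that preserves residues mod $k-1$. Hence each step $\pti{f}{i} \mapsto \pti{f}{i-1} = \a_{\w_i}(\pti{f}{i})$ with $\w_i = j < k$ adds exactly one to the count of parts $\equiv j \pmod{k-1}$, while a step with $\w_i = k$ (which occurs only at the start, and is inert) adds nothing. Summing over $i$ gives $a_i = \#\{i : \w_i = j\} = \sum_{j \equiv i \pmod{k-1}} f_j$, which is (1); and (2) follows since $\len{f}$ counts all parts, i.e. the total mass, which equals $a_1 + \cdots + a_{k-1}$ (the final $k$ contributes nothing to the length).

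For (3) and (4): here I would telescope the two cases of Lemma~\ref{lem:stats}. Suppose $\pti{f}{i-1} \in \Aset_{\w_i}$. By Lemma~\ref{lem:stats}(1), $\km{\pti{f}{i}} = \km{\pti{f}{i-1}} - 1$ precisely when $\w_i < \w_{i+1}$ (i.e. $\pti{f}{i} \in \Aset_{\w_{i+1}}$ with larger index) — reading "$\pt f \in \Aset_j$ with $i<j$" and "otherwise" — and $\km{\pti{f}{i}} = \km{\pti{f}{i-1}}$ in every other case. Since $\km{\emptyf} = 0$, summing the increments back up shows $\km{f}$ equals the number of indices $i$ with $\w_i < \w_{i+1}$, which is exactly $\aasc(\w)$ (note that the last letter is $\w_r = k$, so index $r$ is never an ascent, and index $r-1$ behaves correctly). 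Similarly, by Lemma~\ref{lem:stats}(2), at step $i$ the size drops by $(k-1)\km{\pti{f}{i-1}}$ if $\w_i = k$ and by $(k-1)\km{\pti{f}{i-1}} - (k-1-\w_i)$ if $\w_i < k$; summing over $i$ from $1$ to $r$ yields
\[
|f| = (k-1)\sum_{i=1}^{r} \km{\pti{f}{i-1}} - \sum_{i\,:\,\w_i < k}(k-1-\w_i).
\]
The second sum is $\sum_{i=1}^{k-1}(k-1-i)a_i$ by definition of the $a_i$. For the first sum, I would use the telescoping from (3): $\km{\pti{f}{i-1}}$ equals the number of ascents of $\w$ at positions $\ge i$, so $\sum_{i=1}^r \km{\pti{f}{i-1}} = \sum_{j \in \aAsc(\w)} j = \amaj(\w)$ (each ascent at position $j$ is counted once for each $i \le j$). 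This gives (4).

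The main obstacle is getting the indexing in the telescoping exactly right — in particular, matching Lemma~\ref{lem:stats}'s clause "$\pt f \in \Aset_j$ with $i < j$" to the ascent condition "$\w_i < \w_{i+1}$" at the boundary (the transition into $\pti{f}{r-1} = \emptyf$, i.e. the step producing the final letter $k$), and confirming that the inert initial application of $\a_k$ and the terminal letter $k$ contribute zero everywhere they should. Once that bookkeeping is pinned down, (1)--(4) are immediate consequences of Lemma~\ref{lem:stats} and the residue-preservation property of $\astar$.
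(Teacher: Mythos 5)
Your proposal is correct and follows essentially the same route as the paper: residue-invariance under the factorization $f=\a_w(0)$ for claims (1)--(2), and telescoping Lemma~\ref{lem:stats} for claims (3)--(4), with your double-counting of ascents at positions $\geq i$ being just a restatement of the paper's computation of $\sum_r \km{\pti{f}{r}} = \amaj(\w)$. One minor misstatement: the symbol $k$ may occur anywhere in $\w$, not only as the (inert) initial application of $\a_k$ in the right-to-left construction, but this is harmless since, as you already note, $\astar=\a_k$ preserves the residue counts mod $k-1$ regardless.
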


\begin{proof}
We have already established the bijectivity of the encoding.

Observe that the sum $\sum_{j \equiv i \pmod{k-1}} f_j$ is invariant under $\down{s}\up{s+k-1}$ for all $s \geq 1$ and also under $\up{s}$ for $s \not \equiv i \pmod{k-1}$.  It is therefore invariant under $\a_t$ for $t \neq i$ and increases by 1 under $\a_i$. Claim 1 follows from the fact that $f=\a_w(0)$. Claim 2 is  immediate from Claim 1.  

Claim 3 is clear from iterative application of Lemma~\ref{lem:stats}(1).

We now address Claim 4. 
Suppose $\w = \w_1 \cdots \w_{n+1}$ and let $\aAsc(\w) = \{i_1, i_2, \ldots, i_m \}$, where  $1 \leq i_1 < i_2 < \cdots < i_m=n$.  Then iterating Lemma~\ref{lem:stats}(1) gives  $\km{\pti{f}{r}}=m-j$ for $r$ in the range $i_j \leq r < i_{j+1}$, where we take $i_0=0$ and $i_{m+1}=\infty$. Hence 
$$
\sum_{r=1}^n \km{\pti{f}{r}} = \sum_{j=0}^{m-1} (m-j)(i_{j+1}-i_j) = \sum_{j=0}^{m-1} i_{j+1} = \amaj(\w).
$$	
%Now rewrite Lemma~\ref{lem:stats}(3) as	
%$$
%	|f|=|\pt f| + (k-1)\km{f} + (i-k+1) - \delta_{i,k},
%$$ 
%where $\delta_{i,k}=1$ if $i=k$ and $\delta_{i,k}=0$ otherwise.
But iterating Lemma~\ref{lem:stats}(2) yields
\begin{align*}
	|f| &=(k-1)\sum_{r=1}^{n} \km{\pti{f}{r}} + \sum_{i=1}^{k-1} a_i(i-k+1),
 \end{align*}
and Claim 4 follows.
%Finally, we observe that appending $\a_k$'s to $\w$ does not alter either $\des(\w)$ or $\maj(\w)$, so (1)--(4) remain valid after this change.
\end{proof}

\begin{exmp}
Let $f=(1,2,1,0,0,1,0,0,0,0,1)$, which corresponds to the partition $\l=(11,6,3,2,2,1)$ of $25$.  
In Example~\ref{exmp:burge} we determined the \akcode{3} of $f$ to be $
\w=312232113.$
Thus $a_1=a_2=3$, and we note that $\l$ has 3 odd and 3 even parts, in agreement with (1) and (2) of Theorem~\ref{thm:burge}.  Furthermore we have $\aAsc(\w)=\{2,4,8\}$, so $\aasc(\w)=3$ and $\amaj(\w)=2+4+8=14$. Claims (3) and (4) of the Theorem correctly predict $\mu_3(f)=3$ and $|f|=2(14)-(1)a_1-(0)a_2=25$.
\end{exmp}

\begin{exmp}
The following table lists all partitions of 5 along with their corresponding frequency lists and \akcode{4}s. Each of these partitions has $4$-measure equal to 1, reflected by a singular \aascent in each codeword (occurring at the second-to-last index). Observe that $|\l| = 3\amaj(\w) - 2a_1-a_2$ and $\len{\l}=a_1+a_2+a_3$ in every row of the table, in accordance with Theorem~\ref{thm:burge}.
$$
\begin{array}{c|c|r|c|c|c|c}
\l & f & w=\aburge{4}(f) &  \amaj(\w) & a_1 & a_2 & a_3  \\
\hline
(5) & (0,0,0,0,1) & 424 & 2 & 0 & 1 & 0\\
(4,1) & (1,0,0,1) & 4114  & 3 & 2 & 0 & 0\\
(3,2) & (0,1,1) & 324  & 2 & 0 & 1 & 1 \\
(3,1,1) & (2,0,1) & 3114  & 3 & 2 & 0 & 1 \\
(2,2,1) & (1,2) & 2214  & 3 & 1 & 2 & 0\\
(2,1,1,1) & (3,1) & 21114 & 4 & 3 & 1 & 0 \\
(1,1,1,1,1) & (5) & 111114  & 5 & 5 & 0 & 0
\end{array}
$$
\end{exmp}

\subsection{Encoding Partitions with Distinct Parts}

\newcommand{\bmax}{\b_{k+1}}
\newcommand{\dt}{\partial}
\newcommand{\BFset}{\Fset^{01}}
\newcommand{\BIlk}{\Ilk}
\newcommand{\BIrk}{\Irk}

We now turn to study  partitions $\l \in \Dset$ with distinct parts. Evidently these are the partitions whose frequency lists have entries 0 or 1. Let $\BFset$ denote the set of all such binary sequences in $\Fset$.  

We seek an encoding of $\BFset$  akin to Theorem~\ref{thm:Fburge}. However, since our prior operators $\a_1,\ldots,\a_k$  evidently do not map $\BFset$ into itself,  we must modify these  transformations so they preserve binary sequences.  

\begin{convention}
Unless otherwise noted, $k \geq 1$ is a fixed ambient parameter.
\end{convention}

We begin by defining the mappings $\b_1,\ldots,\b_{k+1}$  on $\BFset$ as follows.  Let 
$$
	\bstar(f):=\Big(\prod_{i \in \BIlk(f)} \down{i}\up{i+k}\Big)(f),
$$ 
and then, for $1 \leq j \leq k$, let
\begin{align*} 
  \b_j(f) 
  &=\up{j}(f_1,\ldots,f_{j-1},\bstar(f_{j},f_{j+1},\ldots) ) \\
  &=(f_1,\ldots, f_{j-1},\up{1}\bstar(f_{j},f_{j+1},\ldots)).
\end{align*}
Let us now recycle notation from the previous section and define the inverse map  $\map{\dt}{\BFset}{\BFset}$ by
$$
	\dt(f) :=\Big(\prod_{i \in \BIrk(f)} \up{i-k}\down{i}\Big)(f).
$$

We draw the reader's attention to the subtle difference in the definition of $\bstar$ versus $\astar$.  Both act on forward $k$-clusters $(i,\ldots,i+k-1)$ of $f$, but $\bstar$ pushes one unit from $f_i$ to $f_{i+k}$ rather than from $f_i$ to $f_{i+k-1}$.  Note that $\bstar$ (and thus each $\b_i$) does indeed preserve binary  sequences.  This is a consequence of the following observation: If $f \in \BFset$ and  $i\in\Ilk(f)$, then $f_{i+k}>0$ if and only if $i+k\in\Ilk(f)$.
Analogous statements hold for $\dt$.

\begin{defn}
For $1 \leq j \leq k$, let $\Bset_j$ be the set of  $f \in \BFset$ such that $j=\min \Irk(f)$.  Let $\Bstar$ be the complement of 
$\bigcup_{j=1}^{k} \Bset_{j}$ in $\BFset$. Thus $\Bstar$ contains $f=\emptyf$ along with all $f$ such that $\min \Irk(f) > k$.
\end{defn}

\begin{exmp}
\label{exmp:bkburge}
Let $k=3$ and $f = (1,0,1,1,0,1,0,0,1,1,0,0,1,0,1,1,0,1)$. Then $\Ilk[3](f)=\{1,4,9,13,16\}$ and $\Irk[3](f)=\{3, 5,10, 15, 18\}$, so $f \in \Bset_3$.  The computation of $\b_j(f)$ and $\dt(f)$ is illustrated below.  Arrows arch over $(k+1)$-tuples $(f_i,\ldots,f_{i+k})$ such that $i \in \Ilk[3](f)$ (in the case of $\b_j$) or  $i+k \in \Irk[3](f)$ (in the case of $\pt$). One unit is pushed along each arrow from tail to tip.
\begin{align*}
	\b_4(f)
&=\b_4(\BLpair{1,0,1},\BLpair{1,0,1},0,0,\BLpair{1,1,0},0,\BLpair{1,0,1},\BLpair{1,0,1},0,0,\ldots) \\
&=(0,0,1,1,0,1,1,0,0,1,0,1,0,0,1,1,0,1,1) \\[1.5ex]
	\b_1(f) &= 
	\up{1}(
	\b_4(\BLpair{1,0,1},\BLpair{1,0,1},0,0,\BLpair{1,1,0},0,\BLpair{1,0,1},\BLpair{1,0,1},0,0,\ldots)) \\
	 &= \up{1}(0,0,1,1,0,1,1,0,0,1,0,1,0,0,1,1,0,1,1,0,\ldots) \\
	 &= (1,0,1,1,0,1,1,0,0,1,0,1,0,0,1,1,0,1,1,0,\ldots) \\[1.5ex]
	\b_2(f) &= 
	\up{2}(
	1,\b_4(0,\BLpair{1,1,0},\BLpair{1,0,0},\BLpair{1,1,0},0,\BLpair{1,0,1},\BLpair{1,0,1},0,0,\ldots)) \\
	 &= (1,1,0,1,0,1,0,0,1,1,0,1,0,0,1,1,0,1,0,\ldots)  \\[2ex] 
	\b_3(f) &= 
	\up{3}(
	1,0,\b_4(\BLpair{1,1,0},\BLpair{1,0,0},\BLpair{1,1,0},0,\BLpair{1,0,1},\BLpair{1,0,1},0,0,\ldots)) \\
	 &= (1,0,1,1,0,1,0,0,1,1,0,1,0,0,1,1,0,1,0,\ldots)  \\[2ex] 
	 \dt(f)&=
	\dt\BRpair{(1,0,1},\BRpair{1,0,1},0,\BRpair{0,1,1},0,0,\BRpair{1,0,1},\BRpair{1,0,1},0,\ldots)  \\
	&= (1,0,1,1,0,0,1,0,1,0,0,1,1,0,1,1,0,0,\ldots).
\end{align*}
The reader can verify that $\b_j(f) \in \Bset_j$  and $\pt(\b_j(f))=f$ for all $j$, while $\b_j(\dt(f))=f$ only for $j=3$.  
\end{exmp}

Note that a binary sequence $f \in \BFset$ can  be identified with its support $S(f):=\{i \,:\, f_i = 1\}$. This amounts to viewing the corresponding partition $\l \in \Dset$ simply as a set of positive integers.   The action of $\bstar$   can then be described as follows:  First scan $S(f)$ from least-to-greatest to greedily identify maximal arithmetic sequences having common difference $k$. The union of these sequences is $\Ilk(f)$.  Then $\bstar$ shifts each such sequence up by $k$ and leaves the remaining elements of $S$ unchanged.  That is, 
\begin{equation}
\label{eq:support}
S(\bstar(f)) = (S(f) \setminus \Ilk(f)) \cup (\Ilk(f) + k).
\end{equation}
The action of $\dt$ can be described similarly: Parse $S(f)$ from greatest-to-least to identify maximal sequences of common difference $-k$, then shift each down by $k$ and discard anything $\leq 0$.

For example, let $\l=(20,19,16,12,10,8,7,5,3,2)$ and $k=3$. Scanning from least-to-greatest yields maximal sequences $(2,5,8)$, $(12)$ and $(16,19)$, which $\b_4$ transforms into $(5,8,11)$, $(15)$ and $(19,22)$.  Thus $\b_4(\l)=(22,20,19,15,11,10,8,7,5,3)$.  Parsing $\l$ from greatest-to-least identifies sequences $(20), (16), (12), (8,5,2)$, which are transformed by $\dt$ into $(17), (13), (9), (5,2)$, resulting in $\dt(\l)=(19,17,13,10,9,7,5,3,2)$.

%Reading the above from right to left we note that $j$ in the support of $\b_{k+1}(f)$ belongs to $\Irk(\beta_{k+1}(f))$ if and only if $j-k$ belongs to $\Ilk(f)$, i.e., 
%This last observation plays an important role in the interplay between $\b_{k+1}$ and $\pt$ studied below.

\begin{prop}
\label{prop:Bbijections}
For $1 \leq j \leq k+1$, the map $\b_j$ defined above is a bijection from $\BFset$ to $\Bset_j$.   The map $\dt$ is a $(k+1)$-to-$1$ surjection from $\BFset$ to $\BFset$ that restricts to $\b_j^{-1}$ on $\Bset_j$.
\end{prop}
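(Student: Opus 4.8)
The plan is to mimic the proof of Proposition~\ref{prop:bijections} almost line by line, replacing the shift ``$k-1$'' by ``$k$'' and invoking the binary observation recorded above (for $f\in\BFset$ and $i\in\Ilk(f)$, one has $f_{i+k}>0$ iff $i+k\in\Ilk(f)$) to stay inside $\BFset$. The first -- and main -- step is to establish the cluster-tracking identities analogous to~\eqref{eq:irk_astar} and~\eqref{eq:irk_aj}, namely
$$\Irk(\bstar(f)) = \Ilk(f)+k \qquad\text{and}\qquad \Irk(\b_j(f)) = \{j\}\cup\bigl(\Ilk(f_j,f_{j+1},\ldots)+k+j-1\bigr)\quad(1\le j\le k),$$
together with the reverse statement that for $g\in\Bstar$ the backward clusters of $g$ are exactly the forward clusters of $\dt(g)$, and its ``peeled'' variant $\Ilk(\dt(h))=(\Irk(h)\setminus\{1\})-k$ for $h\in\Bset_1$. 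Here $\bstar$ moves one unit from position $i$ to position $i+k$ along each forward cluster, and the observation guarantees that position $i+k$ is unoccupied unless it is itself the left end of the next forward cluster, in which case the two pushes at $i+k$ cancel; hence no entry ever exceeds $1$ and the greedy right-to-left parse of $\bstar(f)$ lands precisely on $\Ilk(f)+k$. Granting these identities it follows immediately that $\b_j(f)\in\Bset_j$ for all $1\le j\le k+1$ (when $j\le k$ the extra index $j$ is the minimum of the set, since $j\le k$ while all other elements exceed $k+j-1\ge j$), that $\bstar(f)\in\Bstar$ with $\dt(\bstar(f))=f$, and that $\bstar(\dt(g))=g$ for $g\in\Bstar$; in particular $\dt$ restricts to $\b_{k+1}^{-1}$ on $\Bstar=\Bset_{k+1}$.

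The second step is to verify $\dt\circ\b_j=\id_{\BFset}$ and $\b_j\circ\dt=\id_{\Bset_j}$ for $1\le j\le k$. A ``peeling'' observation -- valid because the only backward cluster of $\b_j(f)$, resp.\ of an $f\in\Bset_j$, that touches position $j$ or below is the one indexed by $j$, whose $\dt$-action $\up{j-k}\down{j}$ reduces to $\down{j}$ since $j-k\le 0$ -- shows that $\dt(\b_j(f))$ is $(f_1,\ldots,f_{j-1})$ followed by $\dt(\up{1}\bstar(f_j,f_{j+1},\ldots))$, and that for $f\in\Bset_j$ the sequence $\dt(f)$ is $(f_1,\ldots,f_{j-1})$ followed by $\dt(f_j,f_{j+1},\ldots)$, the tail having $\min\Irk=1$. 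Both relations thus reduce to the case $j=1$, i.e.\ to the identities $\dt(\up{1}\bstar(h))=h$ for $h\in\BFset$ and $\up{1}\bstar(\dt(h))=h$ for $h\in\Bset_1$. These are pure bookkeeping: the operators $\up{i}$ and $\down{i}$ at distinct positions all commute, those contributed by $\bstar$ cancel pairwise against those contributed by $\dt$ (using $\Irk(\up{1}\bstar(h))=\{1\}\cup(\Ilk(h)+k)$ in the first identity and the peeled relation above in the second), and the one surviving operator is $\down{1}$, which $\up{1}\down{1}=\id$ disposes of.

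Finally the bijection statement follows formally: each $\b_j$ is injective (it has the left inverse $\dt$) with image in $\Bset_j$; the sets $\Bset_1,\ldots,\Bset_{k+1}$ are pairwise disjoint and cover $\BFset$; and $\b_j(\dt(g))=g$ for $g\in\Bset_j$ yields surjectivity of $\b_j$ onto $\Bset_j$ together with the identity $\dt|_{\Bset_j}=\b_j^{-1}$. Consequently $\dt^{-1}(g)=\{\b_1(g),\ldots,\b_{k+1}(g)\}$ has exactly $k+1$ elements, so $\dt$ is a $(k+1)$-to-$1$ surjection. I expect the cluster bookkeeping of the first step to be the only genuine obstacle: the $\Fset$-argument of Proposition~\ref{prop:bijections} carries no collision constraint, so what is new here is checking that ``shifting by $k$'' is collision-free and reindexes $\Irk$ correctly -- precisely the point at which distinctness of parts enters -- whereas the two cancellation checks of the second step are routine once the operators are recognized as commuting formal $\pm1$ adjustments at fixed positions.
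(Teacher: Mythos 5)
Your overall architecture coincides with the paper's: everything is made to hinge on the cluster-tracking identity $\Irk(\bstar(f))=\Ilk(f)+k$ (together with its companion $\Ilk(\dt(g))=\Irk(g)-k$ for $g\in\Bstar$), after which the computations of $\dt\circ\b_j$ and $\b_j\circ\dt$ are carried over from Proposition~\ref{prop:bijections} with the shift $k-1$ replaced by $k$. This is exactly what the paper does, and your second and third steps (the reduction to $j=1$, the cancellation of commuting $\up{i}$, $\down{i}$ operators, and the formal derivation of bijectivity and $(k+1)$-to-$1$-ness) are sound.

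The gap sits in the first step, at the very point you flag as the only genuine obstacle. From the binary observation you correctly deduce that $\bstar$ preserves $\BFset$, i.e.\ that $S(\bstar(f))=(S(f)\setminus\Ilk(f))\cup(\Ilk(f)+k)$ with no entry exceeding $1$. But the clause ``hence the greedy right-to-left parse of $\bstar(f)$ lands precisely on $\Ilk(f)+k$'' does not follow from collision-freeness alone: the right-to-left parse of $\bstar(f)$ sees not only the shifted cluster heads $\Ilk(f)+k$ but also the untouched support $S(f)\setminus\Ilk(f)$, and one must rule out both that the parse selects a stray position as a right endpoint and that a stray position sitting too close to some element of $\Ilk(f)+k$ causes that element to be skipped. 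This is precisely what the paper's proof of~\eqref{eq:irk_bstar} spends its two contradiction arguments on (locating an $s\in\Ilk(f)$ wedged between consecutive candidates and deriving $f^*_{s+k}=0$ against $f^*_{s+k}=1$, and symmetrically for the reverse inclusion). As written, the central identity --- and hence everything downstream --- is asserted rather than proved; you need to supply the two inclusions explicitly. The argument is short, but it is the one place where the distinct-parts case genuinely differs from Proposition~\ref{prop:bijections}, where the analogous statement really is immediate because the pushed unit lands inside the forward cluster rather than one step beyond it.
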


\begin{proof}

All claims hinge on the following identities:
\begin{align}
\label{eq:irk_bstar}
\Irk(\b_{k+1}(f))&=\Ilk(f)+k, \quad\text{for  $f \in \BFset$} \\
\intertext{and}
\label{eq:irk_dt}
\Ilk(\dt(f))&=\Irk(f)-k, \quad\text{for  $f \in \Bstar$}.
\end{align}
 With these in hand, the proof is essentially identical to that of Proposition~\ref{prop:bijections}. We carefully prove~\eqref{eq:irk_bstar}. The proof of~\eqref{eq:irk_dt} is similar and omitted, and we direct the reader to Proposition~\ref{prop:bijections} for the rest.

Let $f^*=\bstar(f)$ and suppose $\Irk(f^*)=\{j_1,\ldots,j_m\}$ with $j_1 < \cdots < j_m$. We start by showing $\Irk(f^*) \subseteq \Ilk(f)+k$. Assume not and let $r$ be maximal such that $j_r-k \not \in \Ilk(f)$.
From the definition of $\bstar$ it is clear that $f^*_j=1$ if and only if either 
\begin{itemize}
 \item $f_j=1$ and $j\not\in \Ilk(f)$, or
 \item $j-k \in \Ilk(f)$.   
\end{itemize}
(Note that this is precisely~\eqref{eq:support}.)
In particular we  must have $f_{j_r}=1$ and $j_r \not \in \Ilk(f)$. 
By definition of $\Ilk(f)$, this implies the existence of $s \in \Ilk(f)$  in the interval $j_r-k < s < j_r$.  Also note that $r < m$, as  it is easy to see that $f_{j_m} = 0$.  Then maximality of $r$ gives $j_{r+1}-k \in \Ilk(f)$, and since elements of $\Ilk(f)$  differ by at least $k$ we  have $(j_{r+1}-k)-s \geq k$.  Altogether we have $j_r < s+k \leq j_{r+1}-k$, and this forces  $f^*_{s+k}=0$ by definition of $\Irk(f^*)$. This is contradictory, since $s \in \Ilk(f)$ implies $f^*_{s+k}=1$.

We now show $\Irk(f^*) \supseteq \Ilk(f)+k$. Suppose to the contrary that $i-k \in \Ilk(f)$ and  $i \not \in \Irk(f^*)$. Since $i-k \in \Ilk(f)$ we have $f^*_{i} =1$,  which together with $i \not \in \Irk(f^*)$ implies the existence of $s \in \Irk(f^*)$ with $i < s < i+k$.  Since $\Irk(f^*) \subseteq \Ilk(f)+k$,  we  have $s-k \in \Ilk(f)$. Therefore there are two elements of $\Ilk(f)$, namely $i-k$ and $s-k$, that differ by $(s-k)-(i-k) =s-i < k$, a contradiction.

\end{proof}

As in the previous section, we clearly have $0 \leq |\dt f| < |f|$ for $f \neq \emptyf$.  This permits the following definition of the \bkcode{k} of $f \in \BFset$,  which closely mimics our construction of its \akcode{k}.

\begin{defn}
Let $f \in \BFset$ and let $r$ be the least positive integer such that $\dti{f}{r-1}=\emptyf$. 
The \bkcode{k} of $f$ is the word $\bkburge(f) := \w_1\cdots\w_{r}$ on $\{1,2,\ldots,k+1\}$ determined by $\w_{i} = j \Leftrightarrow \dti{f}{i-1} \in \Bset_j$.
\end{defn}

\begin{exmp}
\label{exmp:bburge}
Computation of the \bkcode{3} of $\l=(22,18,16,14,13,11,9,5,2) \in \Dset$ is  illustrated below.  The result is 
$\bkburge(\l)=22434442121314$. 
$$
\begin{array}{c|l|l|l}
i & \pti{f}{i-1} & \Irk[3](\pti{f}{i-1}) & \w_{i} \\
\hline
1 & (0,1,0,0,1,0,0,0,1,0,1,0,1,1,0,1,0,1,0,0,0,1) & \{2,5,11,14,18,22\} & 2 \\
2 & (0,1,0,0,0,0,0,1,1,0,1,0,1,0,1,1,0,0,1) & \{2,9,13,16,19\} & 2 \\
3 & (0,0,0,0,0,1,0,1,0,1,1,0,1,0,1,1) & \{6,10,13,16\} & 4 \\
4 & (0,0,1,0,0,0,1,1,0,1,1,0,1,0,1) & \{3,8,11,15\} & 3 \\
5 & (0,0,0,0,1,0,1,1,0,1,0,1,1) & \{7,10,13\} & 4 \\
6 & (0,0,0,1,1,0,1,1,0,1,0,1) & \{5,8,12\} & 4 \\
7 & (0,1,0,1,1,0,1,0,1,1) & \{4,7,10\} & 4 \\
8 & (1,1,0,1,1,0,1,0,1) & \{2,5,9\} & 2 \\
9 & (1,1,0,1,0,1,1) & \{1,4,7\} & 1 \\
10 & (1,1,0,1,0,1) & \{2,6\} & 2 \\
11 & (1,0,1,1) & \{1,4\} & 1 \\
12 & (1,0,1) & \{3\} & 3 \\
13 & (1) & \{1\} & 1 \\
14 & \emptyf & & 4
\end{array}
$$\end{exmp}

As with the \akcode{k}, the construction of the \bkcode{k} ensures that $\bkburge(f)$ maps $\BFset$ into $\words{k+1}$, the set of words on $\{1,2,\ldots,k+1\}$ that end with a single copy of $k+1$.  Moreover, if  $\bkburge(f)=w_1\cdots w_r$ then we have $f=(\b_{w_1}\cdots \b_{w_r})(0)$, and we find that $f \mapsto \bkburge(f)$ has inverse $w \mapsto \b_w(0)$. (See the discussion following Example~\ref{exmp:burge}.)

\begin{lem}
\label{lem:bstats}
Suppose $f \in \Bset_i$. Then
\begin{enumerate}
%\item	$\displaystyle 
%		\len{\dt f} = \begin{cases} 
%						\len{f} &\text{if $i=k+1$} \\ 
%						\len{f}-1 &\text{otherwise}. 
%					 \end{cases}$
\item	$\displaystyle
		\km{\dt f} = \begin{cases}
		\km{f}-1 &\text{if $\dt f \in \Bset_j$ with $i < j$ or $i=j\neq k+1$} \\
		\km{f} &\text{otherwise}.
		\end{cases}
		$
\item	$\displaystyle
		|\dt f| = 
		\begin{cases}
		|f|-k\km{f} &\text{if $i=k+1$} \\
		|f|-k\km{f} + (k -i) &\text{otherwise}. \\
		\end{cases}
		$
\end{enumerate}
\end{lem}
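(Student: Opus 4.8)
The plan is to follow the proof of Lemma~\ref{lem:stats} almost verbatim, substituting $\dt,\bstar,\b_i$ for $\pt,\astar,\a_i$ and $k$ for $k-1$; the one structural change to watch is that $\b_i$ acts on the suffix $(f_i,f_{i+1},\ldots)$ rather than on $(f_{i+1},f_{i+2},\ldots)$, and it is exactly this shift that produces the extra ``$i=j\ne k+1$'' alternative in Claim~1. Throughout write $g:=\dt f$, so that $f=\b_i(g)$ because $f\in\Bset_i$. First I record how $\bstar$ affects the three relevant statistics: by~\eqref{eq:irk_bstar} and Proposition~\ref{prop:kmcount} we have $\km{\bstar(h)}=\km h$ for every $h\in\BFset$, and the support formula~\eqref{eq:support} shows that $\bstar$ pushes each of the $|\Ilk(h)|=\km h$ members of $\Ilk(h)$ up by exactly $k$ while fixing everything else (and the shifted parts do not overlap the fixed ones, by the remark that $h_{p+k}>0\iff p+k\in\Ilk(h)$ for $p\in\Ilk(h)$), so $|\bstar(h)|=|h|+k\km h$ and $\len{\bstar(h)}=\len h$.

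The case $i=k+1$ is then immediate, since $f=\bstar(g)$: both claims reduce to $\km{\dt f}=\km g=\km f$ and $|\dt f|=|g|=|f|-k\km f$. So assume $1\le i\le k$ and set $g':=(g_i,g_{i+1},\ldots)$. Running the argument that produced~\eqref{eq:irk_aj} — using that $\bstar$ maps $\BFset$ into $\Bstar$, so that applying $\up{1}$ creates a genuine new part at position $1$ which becomes the least element of the relevant $\Irk$ — yields the analogue $\Irk(\b_i(g))=\{i\}\cup(\Ilk(g')+k+i-1)$, whence $\km f=1+\km{g'}$ by Proposition~\ref{prop:kmcount}. Claim~2 then drops out of a direct comparison of $|f|$ with $|g|$: the coordinates below position $i$ are unchanged, and replacing $g'$ (placed at positions $i,i+1,\ldots$) by $\up{1}\bstar(g')$ alters the size by $k\km{g'}+i$ — the $k\km{g'}$ coming from $\bstar$, with the index-shift contributions cancelling because $\len{\bstar(g')}=\len{g'}$, and the $+i$ from the one new part that $\up{1}$ places at position $i$. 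Since $\km{g'}=\km f-1$, this is precisely $|\dt f|=|g|=|f|-k\km f+(k-i)$.

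The remaining point — Claim~1 for $1\le i\le k$, which is the only delicate one — reduces to expressing $\km{g'}$ in terms of $\km g$. One tracks how the greedy right-to-left parse underlying $\Irk$ runs on the suffix $g'$: it recovers precisely those elements of $\Irk(g)$ that are $\ge i$ (shifted down by $i-1$), picking them off in decreasing order just as the parse of $g$ would and then halting, the point being that position $i$ is never a valid new choice — it is either unoccupied, or it lies inside a backward cluster already accounted for (this uses $1\le i\le k$ together with the fact that consecutive members of $\Irk(g)$ differ by at least $k>i-1$, so at most one element of $\Irk(g)$ can be below $i$, namely $\min\Irk(g)$). Hence $\km{g'}=|\{r\in\Irk(g):r\ge i\}|$, which equals $\km g$ when $j:=\min\Irk(g)\ge i$ and $\km g-1$ when $j<i$. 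Combined with $\km f=1+\km{g'}$: if $j\ge i$ then $\km{\dt f}=\km g=\km f-1$, and if $j<i$ then $\km{\dt f}=\km g=\km f$. Since the case $\dt f\in\Bstar$ is the case $j=k+1$ in the statement, and since $1\le i\le k$ forces $j\ne k+1$ whenever $j=i$, the condition ``$j\ge i$'' is exactly ``$i<j$ or $i=j\ne k+1$''; together with the $i=k+1$ case this gives the stated dichotomy.

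I expect this last step to be the crux. In Lemma~\ref{lem:stats} the identity $\km{\a_i(g)}=1+\km{(g_{i+1},g_{i+2},\ldots)}$ pairs off cleanly against $\Irk(g)$, whereas here $\km{\b_i(g)}=1+\km{(g_i,g_{i+1},\ldots)}$ involves the coordinate $g_i$, so one genuinely has to decide whether prepending position $i$ to that suffix raises the $k$-measure — and the answer depends on $\min\Irk(g)$ versus $i$, which is the origin of the new case in the statement.
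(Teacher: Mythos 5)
Your proposal is correct and follows essentially the same route as the paper: both reduce to the identities $\km f = 1+\km{(g_i,g_{i+1},\ldots)}$ and $|f| = |g| + i + k\,\km{(g_i,g_{i+1},\ldots)}$ for $i\le k$ (with the easy separate case $i=k+1$ via~\eqref{eq:irk_bstar}), and then split on $j<i$ versus $j\ge i$ to compare $\km{(g_i,g_{i+1},\ldots)}$ with $\km g$. The paper simply states these facts as a modification of Lemma~\ref{lem:stats}, whereas you supply the explicit greedy-parse and size bookkeeping; your identification of the shifted suffix $(g_i,\ldots)$ as the source of the extra ``$i=j\neq k+1$'' case matches the paper's reasoning exactly.
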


\begin{proof}
The proof is identical to that of Lemma~\ref{lem:stats},  except for relying on~\eqref{eq:irk_bstar} in place of~\eqref{eq:irk_astar}. In particular, with $g=\dt f$ we have $f=\b_i(g)$ and
$$
\Irk(f)=
\begin{cases}
    \Ilk(g)+k &\text{if $i=k+1$} \\
    \{i\}\cup \left(\Irk(\b_{k+1}(g_i, g_{i+1},\ldots)+(i-1)\right)
    & \text{otherwise}.
\end{cases}
$$
This gives
$$
\km f =
\begin{cases}
    \km{g} &\text{if $i=k+1$} \\
    1+\km{g_i,g_{i+1},\ldots} &\text{otherwise}.
\end{cases}
$$
and
\begin{equation*}
|f|=
\begin{cases}
|g| + k\km{g} &\text{if $i=k+1$} \\
|g| + i + k\km{g_{i},g_{i+1},\ldots} &\text{otherwise}.
\end{cases}
\end{equation*} 
We then analyse separately the cases $j<i$ and $j\ge i$ and find
\begin{align*}
j<i &\implies 
\Irk(g)=\{j\}\cup\left(\Irk(g_i,g_{i+1},\ldots)+(i-1)\right) \\
&\implies
\km{g}=1+\km{g_i,g_{i+1},\ldots} \\
\intertext{and}
i \leq j &\implies
\Irk(g)=\Irk(g_i,g_{i+1},\ldots)+(i-1) \\
&\implies 
\km{g}=\km{g_i,g_{i+1},\ldots}.
\end{align*}
Combining the above observations yields the desired conclusions.
\end{proof}

Finally we obtain the following analogue of Theorem~\ref{thm:Fburge} for partitions with distinct parts.  

\begin{thm}
\label{thm:Fbburge}
The encoding $f \mapsto \bkburge(f)$ is a bijection from $\BFset$ to $\words{k+1}$. Moreover, if $\w = \bkburge(f)$ and  if $a_i$ is the number of occurrences of $i$ in $\w$, then: 
\begin{enumerate}
\item	$a_i = \sum_{j \equiv i \pmod{k}} f_j$, for $1 \leq i \leq k$
\item	$\len{f} = a_1 + \cdots + a_{k}$
\item	$\km{f} = \basc(\w)$
\item	$|f| = k\bmaj(\w) - \sum_{i=1}^{k} (k-i)a_i$.
\end{enumerate}
\end{thm}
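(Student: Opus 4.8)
The plan is to run the proof of Theorem~\ref{thm:Fburge} essentially verbatim, with $\b_1,\ldots,\b_{k+1}$ playing the role of $\a_1,\ldots,\a_k$ and Lemma~\ref{lem:bstats} replacing Lemma~\ref{lem:stats}. Bijectivity of $f\mapsto\bkburge(f)$ has already been observed in the paragraph following Example~\ref{exmp:bburge} (its inverse is $\w\mapsto\b_\w(0)$), so only the four statistical identities remain. For Claim 1, I would track the functional $N_i(f):=\sum_{j\equiv i\pmod k}f_j$ (for $1\leq i\leq k$) through the elementary moves: each $\down{s}\up{s+k}$ fixes every $N_i$ since it transfers a unit within a fixed residue class mod $k$, so $\bstar$ fixes every $N_i$; and $\up{s}$ raises $N_i$ by one exactly when $s\equiv i\pmod k$. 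Keeping account of the index shift in $\b_t(f)=(f_1,\ldots,f_{t-1},\up{1}\bstar(f_t,f_{t+1},\ldots))$ for $1\leq t\leq k$ — the coordinate bumped by $\up{1}$ is the one originally indexed $t$ — one finds that $N_i(\b_t(f))$ equals $N_i(f)+1$ if $t=i$ and $N_i(f)$ otherwise, while $N_i(\b_{k+1}(f))=N_i(f)$. Since $f=\b_\w(0)$ and $N_i(0)=0$, this gives $a_i=N_i(f)$, which is Claim 1; Claim 2 is then immediate because the residue classes $1,\ldots,k$ partition the positive integers.

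Claims 3 and 4 come from iterating Lemma~\ref{lem:bstats} along the chain $f=\dti{f}{0},\dti{f}{1},\ldots,\dti{f}{r-1}=\emptyf$, where $r$ is the length of $\w$. By construction $\dti{f}{i-1}\in\Bset_{\w_i}$ for each $i$, where we read the terminal $\dti{f}{r-1}=\emptyf$ as an element of $\Bstar$ (consistent with $\w_r=k+1$). Lemma~\ref{lem:bstats}(1) then says $\km{\dti{f}{i}}=\km{\dti{f}{i-1}}-1$ precisely when $\w_i<\w_{i+1}$ or $\w_i=\w_{i+1}\neq k+1$ — that is, precisely when $i\in\bAsc(\w)$ — and $\km$ is unchanged otherwise; telescoping from $\km{f}$ down to $\km{\emptyf}=0$ gives $\km{f}=|\bAsc(\w)|=\basc(\w)$, which is Claim 3. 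This is the one place where the distinct-parts argument diverges from the original: the extra clause $i=j\neq k+1$ in Lemma~\ref{lem:bstats}(1), absent from Lemma~\ref{lem:stats}(1), is exactly what promotes strict ascents to $\b$-ascents, and hence $\amaj$ to $\bmaj$. Writing $\bAsc(\w)=\{i_1<\cdots<i_m\}$ — so $m=\km{f}$ and $i_m=r-1$, since $\w_{r-1}\leq k<k+1=\w_r$ — the same iteration gives $\km{\dti{f}{s}}=m-j$ for $i_j\leq s<i_{j+1}$, with $i_0=0$.

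For Claim 4 I would then sum the identities of Lemma~\ref{lem:bstats}(2), written as $|\dti{f}{s}|-|\dti{f}{s+1}|=k\,\km{\dti{f}{s}}-\epsilon_s$ with $\epsilon_s=k-\w_{s+1}$ when $\w_{s+1}\neq k+1$ and $\epsilon_s=0$ otherwise, over $s=0,\ldots,r-2$. The left side telescopes to $|f|$; the $\km$-sum on the right equals $k\sum_{j=0}^{m-1}(m-j)(i_{j+1}-i_j)=k\sum_{j=1}^{m}i_j=k\,\bmaj(\w)$ (using $i_0=0$); and $\sum_s\epsilon_s=\sum_{i=1}^{k}(k-i)a_i$, the single position carrying $k+1$ contributing nothing. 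This is exactly Claim 4. I expect no real obstacle, since Lemma~\ref{lem:bstats} carries the entire structural content; the only spots needing a little care are the residue-class arithmetic in Claim 1 (getting the shift right when $\bstar$ acts on the tail and $\up{1}$ then hits its first entry) and, in Claim 4, keeping the summation over $s=0,\ldots,r-2$ so that the telescoping of the $|\cdot|$-differences lines up with the interval decomposition of $\bmaj(\w)$ — a one-off slip there would shift the constant in Claim 4 by a multiple of $k$.
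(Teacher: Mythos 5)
Your proposal is correct and is exactly the argument the paper intends: its proof of Theorem~\ref{thm:Fbburge} simply says to repeat the proof of Theorem~\ref{thm:Fburge} with Lemma~\ref{lem:bstats} in place of Lemma~\ref{lem:stats}, which is what you carry out (with the residue classes now taken mod $k$, the shift by $k$ rather than $k-1$, and the extra clause $i=j\neq k+1$ in Lemma~\ref{lem:bstats}(1) converting \aascents{} into \bascents{}). The bookkeeping in your Claims 1, 3, and 4 — including the telescoping over $s=0,\ldots,r-2$ and the identity $\sum_{j=0}^{m-1}(m-j)(i_{j+1}-i_j)=\sum_{j=1}^m i_j$ — matches the paper's computation verbatim.
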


\begin{proof}
Repeat the proof of Theorem~\ref{thm:Fburge} but use Lemma~\ref{lem:bstats} in place of Lemma~\ref{lem:stats}.
\end{proof}

\begin{exmp}
In Example~\ref{exmp:bburge} we determined the \bkcode{3} of
$\l=(22,18,16,14,13,11,9,5,2)$ to be
$w=22434442121314$.
%$$\b_2\b_2\b_4\b_3\b_4\b_4\b_4\b_2\b_1\b_2\b_1\b_3\b_1\b_4.$$
Thus we have $\bAsc \w=\{1,2,4,9,11,13\}$,  $\basc \w=6$, $\bmaj \w=40$, and $(a_1,a_2,a_3)=(3,4,2)$.  Observe that $\l$ has $\len{\l}=a_1+a_2+a_3=9$ parts, with $a_i$ of these congruent to $i \pmod{3}$. Moreover, $\mu_3(\l)=6$ and $|\l|=110=3\bmaj \w - 2a_1 - a_2$, in agreement with Theorem~\ref{thm:bburge}.
\end{exmp}

\begin{exmp}
The table below lists all partitions of 9 into distinct parts along with their \bkcode{4}s and associated data.  The reader can verify that $|\l| = 4\bmaj(\w) - 3a_1-2a_2-a_3$,  $\len{\l}=a_1+a_2+a_3+a_4$, and $\mu_k(\l)=\basc(w)$ in each case.
$$
\begin{array}{c|c|r|c|c|c|c|c|c}
\l & f & w=\bburge{4}(f) & \basc(\w) & \bmaj(\w) & a_1 & a_2 & a_3 & a_4 \\
\hline
(9) & (0,0,0,0,0,0,0,0,1) & 5515 & 1 & 3 & 1 & 0 & 0 & 0 \\
(8,1) & (1,0,0,0,0,0,0,1) & 145 & 2 & 3 & 1 & 0 & 0 & 1\\
(7,2) & (0,1,0,0,0,0,1) & 235 & 2 & 3 & 0 & 1 & 1 & 0\\
(6,3) & (0,0,1,0,0,1) & 5325 & 1 & 3 & 0 & 1 & 1 & 0\\
(6,2,1) & (1,1,0,0,0,1) & 2215 & 2 & 4 & 1 & 2 & 0 & 0\\
(5,4) & (0,0,0,1,1) &  5425 & 1 & 3 & 1 & 0 & 0 & 1\\
(5,3,1) & (1,0,1,0,1) &  1315 & 2 & 4 & 2 & 0 & 1 & 0\\
(4,3,2) & (0,1,1,1) & 4325 & 1 & 3 & 0 & 1 & 2 & 0
\end{array}
$$
\end{exmp}

\subsection{A Generalized Inversion Encoding}
\label{sec:invconst}

%\ytableausetup{centertableaux}
%\ytableaushort
%{\bullet\bullet\bullet\bullet\bullet\bullet,
%\bullet\circ\circ\circ\circ\circ,
%\bullet\circ\ast\ast\ast,
%\bullet\circ,
%\bullet\circ,
%\bullet}
%* [*(light-gray)]{3,3,3}

\newcommand{\g}{\delta}
\newcommand{\gpt}{\pt}
\newcommand{\gpti}[1]{\pt^{#1}}
\newcommand{\shift}{\Delta}
\newcommand{\ishift}{\Delta^{\!-}}
\newcommand{\Gset}{\mathcal{C}}
\newcommand{\gkcode}[1]{$[\delta,#1]$-code}
\newcommand{\gkinv}[1][k]{\Omega_{\g,#1}}
\newcommand{\circwords}[1]{\mathcal{W}_{#1}^\circ}

%Action on $\l=(\l_1,\ldots,\l_m)$, where $\_m > 0$ is the smallest part of $\l$. Then $\l_m \mapsto 0$ and 
%$$
%	\l_i \mapsto 
%	\begin{cases}
%	0 &\text{if $i = m$} \\
%	\l_i - k+1 &\text{if $\l_i \equiv j \pmod{k-1}$ with $j \in \{1,2,\ldots,
%	\l_i &\text{otherwise}
%	\end{cases}
%$$

We now digress somewhat to introduce another encoding  of partitions as $k$-ary words that is quite different from $\akburge{}$ and $\bkburge{}$ but constructed in a similar manner.  This is a ``$(k-1)$-modular''  generalization of the well-known correspondence between partitions and lattice paths, whereby a path describes the boundary of a Young diagram.  (It reduces to this correspondence in case $k=2$.)   The correspondence is not directly related to $k$-measure and its construction does not rely on the cluster sets $\Ilk(f)$ and $\Irk(f)$ used previously.  

\begin{convention}
As before, $k \geq 2$ will be fixed throughout.
\end{convention}
  
For $1 \leq i < k$ we define the selective right shift operator
$\map{\shift_i}{\Fset}{\Fset}$ by
$$
    (\shift_i(f))_j 
    =
    \begin{cases}
        f_{j-k+1} &\text{if $j\equiv i\pmod{k-1}$} \\
        f_j &\text{otherwise}.
    \end{cases}
%    \quad\text{and}\quad
%        (\ishift_i(f))_j 
%    =
%    \begin{cases}
%        f_{j+k-1} &\text{if $j\equiv i\pmod{k-1}$} \\
%        f_j &\text{otherwise}.
%    \end{cases}
$$ 
Let $\ishift_i$ be the analogous left shift.  
That is, $\shift_i$ (resp. $\ishift_i$) acts on $f$ by shifting the subsequence $(f_i, f_{i+(k-1)}, f_{i+2(k-1)},\ldots)$ to the right (left) by $k-1$. Note that the $i$-th entry of $\shift_i(f)$ becomes 0 due to our convention that $f_j < 0$ for $j \leq 0$.  For example, with $k=4$ and $f=(1,2,3,4,5,6,0,\ldots)$ we have $\shift_2(f)=(1,0,3, 4,2,6,0,5,0,\ldots)$
and $\ishift_2(f)=(1,5,3,4,0,6,0,\ldots)$.

We now follow the template from the previous sections:
\begin{itemize}
\item
Define transformations $\g_1,\ldots,\g_k$ on $\Fset$ by
$$
    \g_j := 
    \begin{cases}
    \up{j} \shift_1 \shift_2 \cdots \shift_{j-1} & \text{if $1 \leq j < k$} \\
    \shift_1 \shift_2 \cdots \shift_{k-1} &\text{if $j=k$}.
    \end{cases}
$$
Note that $\g_k$ is the complete right $(k-1)$-shift on $\Fset$.

For example, with $k=4$ and $f=(1,0,3,2,0,1,0,2,4)$ we have:
\begin{align*}
	\g_1(f) &= (2,0,3,2,0,1,0,2,4) \\ 
	\g_2(f) &= (0,1,3,1,0,1,2,2,4,0) \\
	\g_3(f) &= (0,0,4,1,0,1,2,0,4,0,2) \\
	\g_4(f) &= (0,0,0,1,0,3,2,0,1,0,2,4).
\end{align*}

\item
Define subsets $\Gset_1, \ldots, \Gset_k$ of $\Fset$ by
$$
    \Gset_j := 
    \begin{cases}
    \{f  \,:\, f_1 = f_2 = \cdots = f_{j-1}=0, \, f_j > 0 \} &\text{if $1 \leq j < k$} \\
    \{f  \,:\, f_1 = f_2 = \cdots = f_{k-1}=0 \} &\text{if $j=k$}.
    \end{cases}
$$
Note that these sets partition $\Fset$, and  $\g_j$ is a bijection from $\Fset$ to $\Gset_j$ for all $j$.
%$
%	\Gset_j := \{ f \in \Fset \,:\, f_1=f_2=\cdots=f_{j-1}=0, f_j > 0\}
%$
%$\Gset_j$ be the set of sequences $f$ satisfying $f_1=f_2=\cdots=f_{j-1}=0$ and $f_j > 0$.  Let $\Gset_k$ be the complement of the (disjoint) union
%  $\bigcup_{i=1}^{k-1} \Gset_i$ in $\Fset$, which  consists of all sequences whose initial $k-1$ entries are 0. 
  
\item	  Let $\map{\pt}{\Fset}{\Fset}$ denote the $k$-to-1 map  that restricts to $\g_j^{-1}$ on $\Gset_j$.  Explicitly, if $j = \min\{ i \,:\, f_i \neq 0\}$ then we have
$$
    \pt(f) := 
    \begin{cases}
    \ishift_1 \ishift_2 \cdots \ishift_{j-1}\down{j}(f) & \text{if $1 \leq j < k$} \\
    \ishift_1 \ishift_2 \cdots \ishift_{k-1}(f) &\text{otherwise}.
    \end{cases}
$$

For example, with $k=4$ and $f=(0,0,0,1,0,4,2,0,1,0,1)$ we have:
\begin{align*}
	\pt(f) &= (1,0,4,2,0,1,0,1) \\ 
	\pt^2(f) &= (0,0,4,2,0,1,0,1) \\
	\pt^3(f) &= (2,0,3,0,1,1) 
\end{align*}
\end{itemize}

As before we have $0 \leq |\gpt(f)| < |f|$ for $f \neq \emptyf$, which permits the following definition.

\begin{defn}
Let $f \in \Fset$ and let $r$ be the least nonnegative integer such that $\gpti{r}(f)=(0)$.  The \gkcode{k} of $f \in \Fset$ is the word $\gkinv(f) := w_0 \cdots w_r$ on $\{1,\ldots,k\}$ defined by $w_0=k$ and  $w_{j}=i \Leftrightarrow \gpti{j-1}(f) \in \Gset_i$ for $1 \leq j \leq r$.
\end{defn}

Note that $\gkinv(f)$ is not an element of $\words{k}$ except when $f=\emptyf$. Instead, the \gkcode{k} of $f$ is defined to be a word on $\{1,\ldots,k\}$ that {begins} with $k$ and does not end with $k$ unless $f=\empty$ (whose code is a single $k$).

\begin{exmp}
\label{exmp:gkcode}
Let $f = (1,2,0,0,3)$.  Computation of the $\g$-encodings 
$\gkinv[2](f)=21211222111$,
$\gkinv[3](f)=3122111$  and $\gkinv[4](f)=41224222$   are detailed below (left, center, and  right, respectively).
$$
\begin{array}[t]{ccc}
\hspace{5cm} & \hspace{5cm} & \hspace{5cm}\\
\begin{array}[t]{c|l|c}
i & \pt^{i-1}f & w_i \\ \hline
1 & (1,2,0,0,3) & 1 \\
2 & (0,2,0,0,3) & 2 \\
3 & (2,0,0,3) & 1 \\
4 & (1,0,0,3) & 1 \\
5 & (0,0,0,3) & 2 \\
6 & (0,0,3) & 2 \\
7 & (0,3) & 2 \\
8 & (3) & 1 \\
9 & (2) & 1 \\
10 & (1) & 1 \\
11 & \emptyf & 
\end{array} 
& 
\begin{array}[t]{c|l|c}
i & \pt^{i-1}f & w_i \\ \hline
1 & (1,2,0,0,3) & 1 \\
2 & (0,2,0,0,3) & 2 \\
3 & (0,1,3) & 2 \\
4 & (3) & 1 \\
5 & (2) & 1 \\
6 & (1) & 1 \\
7 & \emptyf & 
\end{array} 
& 
\begin{array}[t]{c|l|c}
i & \pt^{i-1}f & w_i \\ \hline
1 & (1,2,0,0,3) & 1 \\
2 & (0,2,0,0,3) & 2 \\
3 & (0,1,0,0,3) & 2 \\
4 & (0,0,0,0,3) & 4 \\
5 & (0,3) & 2 \\
6 & (0,2) & 2 \\
7 & (0,1) & 2 \\
8 & (0) & 
\end{array}
\end{array}
$$
Let $\l=(5,5,5,2,2,1)$ be the partition corresponding to $f$. Observe that  $\gkinv[2](f)$ describes the  boundary path of the Young diagram of $\l$, reading from left-to-right and interpreting symbols 2 and 1 as east and north steps, respectively.
See Figure~\ref{fig:boundarypath}.
\end{exmp}
\begin{figure}[t]
\centering
\includegraphics[height=2.5cm]{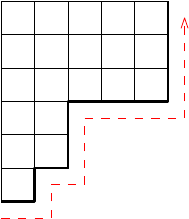}
\caption{The partition $\l=(5,5,5,2,2,1)$ is traced out by its \gkcode{2}  $21211222111$ by  interpreting $2$ and $1$ as east and north steps.}
\label{fig:boundarypath}
\end{figure}

The geometric interpretation of $\gkinv[2](f)$ noted in the above example is easily seen to hold for any $f$.  Indeed, when $k=2$ the operators $\g_1$ and $\g_2$ can be interpreted as acting on Young diagrams by appending an extra row of length 1 and duplicating the leftmost column, respectively.  This bears the familiar relation $|\l|=\inv(w)$.

The situation for general $k \geq 2$ is described by  the following \gkcode{k} analogue of Theorems~\ref{thm:Fburge} and~\ref{thm:Fbburge}.  The proof is straightforward and omitted. 
% We will  revisit this encoding only briefly in the next section  to draw a connection with $\akburge(f)$. 

\begin{defn}
Let $\circwords{k}$ be the set of words on $\{1,\ldots,k\}$ of the form $w_0\cdots w_n$, where $w_0=k$ and $w_n \neq k$ (unless $n=0$).   
\end{defn}

\begin{thm}
\label{thm:inv}
The encoding $f \mapsto \gkinv(f)$ is a bijection from $\Fset$ to $\circwords{k}$, and if $a_i$ is the number of occurrences of $i$ in $w=\gkinv(f)$ then we have: 
\begin{enumerate}
\item	$a_i = \sum_{j \equiv i \pmod{k-1}} f_j$, for $1 \leq i < k$
\item	$\len{f}=a_1+\cdots+a_{k-1}$
\item	$|f| = (k-1)\inv(w) - \sum_{i=1}^{k-1}(k-1-i)a_i$
\end{enumerate}
\end{thm}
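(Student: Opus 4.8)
\textbf{Proof proposal for Theorem~\ref{thm:inv}.}

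The plan is to follow exactly the template that produced Theorems~\ref{thm:Fburge} and~\ref{thm:Fbburge}, adapting each ingredient to the shift operators $\shift_i$ and $\ishift_i$ in place of the cluster operators $\astar,\pt$ and $\bstar,\dt$. First I would confirm the bijectivity assertion: since the sets $\Gset_1,\ldots,\Gset_k$ partition $\Fset$ and each $\g_j\colon\Fset\to\Gset_j$ is a bijection with two-sided inverse the restriction of $\pt$ to $\Gset_j$, the argument following Example~\ref{exmp:burge} applies verbatim to show that $f\mapsto\gkinv(f)$ is a bijection onto the set of words $w_0\cdots w_n$ with $w_0=k$, $w_n\neq k$ for $n\geq 1$ — that is, onto $\circwords{k}$ — with inverse $w\mapsto(\g_{w_1}\cdots\g_{w_n})(0)$, the leading $k$ being inert. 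The key point here is the same one used before: $f=\g_j(g)$ can be detected from $f$ alone (namely $f\in\Gset_j$), so the code can be unwound unambiguously.

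For Claim~1, I would observe that the selective shift $\shift_i$, which moves the subsequence $(f_i,f_{i+(k-1)},\ldots)$ to the right by $k-1$, permutes the entries whose indices are congruent to a fixed residue modulo $k-1$ and hence leaves each partial sum $\sum_{j\equiv s\pmod{k-1}}f_j$ invariant, for every residue $s$. Since $\ishift_i$ likewise permutes such entries (possibly discarding a zero-indexed slot, which contributes nothing), all these sums are preserved by every $\shift_i$ and $\ishift_i$. Therefore $\g_t$ leaves $\sum_{j\equiv s}f_j$ fixed when $t\neq s$ and, because of the extra $\up{t}$ factor, increases it by exactly $1$ when $t=s$ (for $1\le s<k$). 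Claim~1 follows from $f=(\g_{w_1}\cdots\g_{w_n})(0)$ by counting; Claim~2 is then immediate by summing over $i$.

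For Claim~3 I would set up the single-step analogue of Lemma~\ref{lem:stats}(2). A direct computation shows that if $f\in\Gset_j$ then $|\pt f|$ is obtained from $|f|$ by undoing one $(k-1)$-shift of the residue classes $1,\ldots,j-1$ and, when $j<k$, removing a part of size $j$; a $(k-1)$-shift of a residue class changes $|f|$ by $(k-1)$ times the number of positive entries in that class, and one checks these net changes accumulate to give
\[
    |f| - |\pt f| = (k-1)\,\bigl(\text{number of entries shifted right}\bigr) + [\,j<k\,]\cdot j .
\]
Iterating over the whole code, the total number of rightward shifts performed equals $\sum_{i<j}$ over all steps with symbol $j$, which is precisely the inversion count $\inv(w)$ once one accounts for the leading inert $k$ (each symbol $w_m=j$ forces shifts of residue classes $1,\ldots,j-1$, matching the pairs $(m',m)$ with $w_{m'}<w_m$); summing the correction terms $[\,j<k\,]\cdot j$ over occurrences gives $\sum_{i=1}^{k-1} i\,a_i = (k-1)\inv(w) - |f| $ rearranged. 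The bookkeeping that the number of rightward-shift operations applied over the course of building $f$ equals $\inv(w)$ is the step I expect to be the main obstacle, since it requires carefully matching, for each symbol position, the set of residue classes that get shifted against the inversions it creates; this is the analogue of the $\km$--$\amaj$ identity in Theorem~\ref{thm:Fburge} but with $\inv$ rather than $\amaj$, so an alternative is to prove it by induction on code length, peeling off $w_1$. The remaining arithmetic reconciling the correction sum with $-\sum_{i=1}^{k-1}(k-1-i)a_i$ versus $-\sum_{i=1}^{k-1} i\,a_i$ is a routine re-indexing (the paper's convention for the code is dual to that of the Burge code), and I would handle it at the end.
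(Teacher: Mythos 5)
The paper offers no proof here (it is ``straightforward and omitted''), and your plan is exactly the intended one: bijectivity and Claims 1--2 go through verbatim as you describe. For Claim 3 your bookkeeping has two slips worth fixing. The clean one-step statement is: if $f\in\Gset_j$ and $c_i:=\sum_{m\equiv i\pmod{k-1}}f_m$, then
\begin{equation*}
|f|-|\pt f| \;=\; [\,j<k\,]\cdot j \;+\; (k-1)\sum_{i=1}^{j-1}c_i,
\end{equation*}
since $\down{j}$ removes a part of size $j$ and each $\ishift_i$ with $i<j$ lowers the index of every entry in residue class $i$ by $k-1$ (the slot discarded at index $i$ holds a zero because $i<j=\min\{m:f_m\neq 0\}$). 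Iterating along $w=\gkinv(f)=w_0\cdots w_n$ and applying Claim 1 to $\pt^{m-1}f$, whose code is $k\,w_m\cdots w_n$, the shift count at step $m$ is $|\{l>m \,:\, w_l<w_m\}|$ --- these are the inversions having $m$ as their \emph{left} endpoint, not the pairs $(m',m)$ with $w_{m'}<w_m$ that you wrote, which are non-inversions --- so the total over all steps is $\inv(w_1\cdots w_n)$. Consequently $|f|=\sum_i i\,a_i+(k-1)\inv(w_1\cdots w_n)$. Your closing identity $\sum_i i\,a_i=(k-1)\inv(w)-|f|$ is false as written, and the discrepancy is not a re-indexing of conventions: it is precisely the leading symbol $w_0=k$, which contributes $\sum_{i=1}^{k-1}a_i$ additional inversions, giving $|f|=\sum_i i\,a_i+(k-1)\bigl(\inv(w)-\sum_i a_i\bigr)=(k-1)\inv(w)-\sum_{i=1}^{k-1}(k-1-i)a_i$ as required.
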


%%%%%%%%%%%%%%%%%%%%%%%%%%%%%%%%%%%%%%
%%%%%%%%%%%%%%%%%%%%%%%%%%%%%%%%%%%%%%
%%%%%%%%%%%%%%%%%%%%%%%%%%%%%%%%%%%%%%

\section{Some Eulerian Calculus}
\label{sec:eulerian}

In this section we  review some classical $q$-enumeration and discuss a generalization of MacMahon's well-known generating series for the  pair $(\maj, \des)$ over rearrangement classes of words.  This will  be used to deduce Theorem~\ref{thm:main} as a corollary of our encodings of partitions.

\subsection{$q$-Binomial Coefficients}
  
Recall that the \emph{$q$-binomial coefficient}
$[\begin{smallmatrix} c \\ d \end{smallmatrix}]$ is  defined by
\begin{equation}
\qbinom{c}{d} := 
\begin{cases}
	\displaystyle \frac{\qf{q}{q}{c}}{\qf{q}{q}{d}\qf{q}{q}{c-d}}
	&\text{if $c \geq d \geq 0$} \\
	0 &\text{otherwise.}
\end{cases}
\label{eq:qbinomdefn}
\end{equation}
It is well known that $\tqbinom{c}{d}$ is a polynomial in $q$ in which the coefficient of $q^n$ counts partitions $\lambda$ of $n$ with at most $d$ parts, each at most $c-d$. (Equivalently, the Young diagram of $\l$ fits into an $d \times (c-d)$ rectangle.). From this combinatorial interpretation it is clear that
\begin{equation}
\label{eq:limit}
\lim_{a \rightarrow \infty} \qbinom{a+n}{a} = \sum_{\l \in \Pset} q^{|\l|} = \frac{1}{\qf{q}{q}{n}}.
\end{equation}
The  following familiar forms of the \emph{$q$-binomial theorem} are also seen to follow from  elementary decompositions of partitions; see, e.g.,~\cite{And}.
\begin{align}
\label{eq:binom1}
\qf{-qu}{q}{n} &= 
\sum_{a \geq 0} q^{\binom{a+1}{2}} \qbinom{n}{a} u^a \\
\label{eq:binom2}
\frac{1}{\qf{u}{q}{n+1}} &=
\sum_{a \geq 0} \qbinom{a+n}{n} u^a. 
%\sum_{a \geq 0} \qbinom{a+n-1}{n} u^a &= \frac{1}{\qf{qu}{q}{n}}.
\end{align}

\subsection{Enumeration on Rearrangement Classes}
\label{sec:enumeration}

\begin{convention}
Throughout this section (unless otherwise stated), $r$ is a positive integer and $X$ will denote a nonempty set $\{x_1,\ldots,x_r\}$ that is totally ordered by $x_1 < \cdots < x_r$. 
\end{convention}

\begin{defn}
For a sequence
  $\vec{a}=(a_1,\ldots,a_r)$ of nonnegative integers, let $\Wset(\vec{a})$ be the \emph{rearrangement class} of $X^*$ consisting of all words of length $|\vec{a}|:=a_1+\cdots+a_r$  that contain  $a_i$ copies of $x_i$, for $1 \leq i \leq r$. 
(Equivalently, $\Wset(\vec{a})$ consists of all permutations of the word $x_1^{a_1} x_2^{a_2} \cdots x_r^{a_r}$.) 
\end{defn}

 MacMahon~\cite{Mac-1} initiated the study of the major index and found that its distribution over $\Wset(\vec{a})$ was described by the polynomial
\begin{equation}
\label{eq:distribution}
	 \sum_{w \in \Wset(\vec{a})}  q^{\maj w} 
		=
	\frac{\qf{q}{q}{|\vec{a}|}}{\qf{q}{q}{a_1} \cdots \qf{q}{q}{a_r}}.
%	\sum_{w \in \Wset} q^{\inv w} 
\end{equation}
Note that the RHS is the \emph{$q$-multinomial coefficient}
$[\begin{smallmatrix} a \\ a_1,\ldots,a_r \end{smallmatrix}]$, a natural extension of~\eqref{eq:qbinomdefn}.  He later demonstrated that $\inv(w)$ shares the same distribution, i.e.
\begin{equation}
\label{eq:invdistribution}
	 \sum_{w \in \Wset(\vec{a})}  q^{\inv w} 
		=
	\frac{\qf{q}{q}{|\vec{a}|}}{\qf{q}{q}{a_1} \cdots \qf{q}{q}{a_r}},
\end{equation}
A  combinatorial explanation for this equidistribution was provided by Foata~\cite{Foata},  who described a recursive bijection $\map{\foata}{\Wset(\vec{a})}{\Wset(\vec{a})}$ (known as his ``\emph{second fundamental transformation}'') satisfying $\inv(\phi(w))= \maj(w)$ for all $w$.  We will again refer to this transformation  in Section~\ref{sec:connections} but it does not play a central role and we will not describe it further.  The reader is directed to~\cite[Ch. 10]{Lot} for details.

The generating series~\eqref{eq:distribution} can be refined  to address the joint distribution of $(\maj \w, \des \w)$ over $\Wset(\vec{a})$. In~\cite[II.\S462]{Mac-3}, MacMahon shows
\begin{equation}
\label{eq:macmahon}
	\sum_{\w \in \Wset(\vec{a})} q^{\maj \w} z^{\des \w}
	=\qf{z}{q}{|\vec{a}|+1}
	\sum_{n \geq 0} \qbinom{a_1+n}{a_1}\cdots \qbinom{a_r+n}{a_r} z^n.
\end{equation}
Our proof of Theorem~\ref{thm:main} relies on an extension of this identity to words on a \emph{bipartitioned alphabet} with a generalized notion of descent. 

Several such extensions were  described by Clarke and Foata~\cite{ClFo-2,ClFo-3} and these were subsequently unified and further generalized by Foata and Zeilberger~\cite{FoZ}. For our purposes it will suffice to introduce the  notion of an \emph{$s$-descent}, defined as follows in~\cite{ClFo-2,ClFo-3}.   (In fact we will only need cases $s=0$ and $s=r-1$.)
\begin{defn}
\label{defn:xdescent}
Let $0 \leq s \leq r$.  We call $\{x_{r-s+1},\ldots,x_{r}\}$ the \emph{$s$-large} elements of $X$.  If  $w=w_1\ldots w_n$ is a word on $X$, then an \emph{$s$-descent} in $w$ is an index $i$ such that either 
\begin{enumerate}
\item $i < n$ and $w_{i} > w_{i+1}$, or 
\item $i < n$ and $w_{i}=w_{i+1}$ where $w_{i}$ is $s$-large, or 
\item $i = n$ and $w_i$ is $s$-large.
\end{enumerate}
Let $\Des_s \w$ be the set of $s$-descents in $\w$, let $\des_s \w := |\Des_s \w|$ denote their number, and let $\maj_s \w := \sum_{i \in \Des_s \w} i$ denote their sum. 
%We refer to $\maj_s \w$ as the \emph{$s$-major index} of $\w$.
\end{defn}

For example, let $X=\{1,2,3,4,5\}$ with the natural ordering and let $\w=5441324$.  Then $\Des_0 \w = \Des_1 \w = \{1,3,5\}$ and $\maj_0 \w = \maj_1 \w =9$, whereas $\Des_2 \w = \{1,2,3,5,7\}$ and $\maj_2 \w = 18$.   Evidently, $0$-descents coincide with the ``ordinary'' descents defined previously.

\begin{thm}[{\cite[Section 4]{ClFo-3}}]
\label{thm:foata}
%Let $X = \{x_1,\ldots,x_{r}\}$ be linearly ordered by $x_1 < \cdots < x_r$.
%Let $\vec{a}=(a_1,\ldots,a_{r})$ be any sequence of nonnegative integers and let $\Wset(\vec{a})$ be the set of  words on $X$  that contain exactly $a_i$ copies of $x_i$ for all $i$. 
Fix $0 \leq s \leq r$, and for nonnegative $\vec{a}=(a_1,\ldots,a_r)$ let
$$
	A_{\vec{a}}(z,q) =  \sum_{\w \in \Wset(\vec{a})} q^{\maj_s \w} z^{\des_s \w}.
$$
Then
\begin{equation*}
\label{eq:foatathm}
	\sum_{\vec{a} \geq 0} 
	 \frac{A_{\vec{a}}(z,q)}{\qf{z}{q}{|\vec{a}|+1}} \vec{u}^{\vec{a}}
= \sum_{n \geq 0} 
\frac{\qf{-qu_{r-s+1}}{q}{n}\cdots\qf{-qu_r}{q}{n}}{\qf{qu_1}{q}{n+1} \cdots \qf{qu_{r-s}}{q}{n+1}} z^n,
\end{equation*}
where $\vec{u}^{\vec{a}}=u_1^{a_1} \cdots u_r^{a_r}$.
\end{thm}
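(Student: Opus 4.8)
The plan is to derive the bivariate $s$-descent generating function from MacMahon's classical identity~\eqref{eq:macmahon} by means of a symbol-splitting (or ``doubling'') argument, which is the standard device underlying the Clarke--Foata refinements. First I would set up the correspondence: given an alphabet $X=\{x_1,\ldots,x_r\}$ in which the top $s$ letters $x_{r-s+1},\ldots,x_r$ are declared $s$-large, replace each $s$-large letter $x_j$ by a pair of letters $x_j^- < x_j^+$, inserting these $2s$ new symbols into the linear order so that $x_j^- $ sits just below $x_j$ and $x_j^+$ just above it (the non-large letters are left alone). A word $w\in\Wset(\vec a)$ on $X$ then lifts to a word on this enlarged alphabet $X'$ by reading $w$ left to right and, each time an $s$-large letter $x_j$ is encountered, choosing to write $x_j^-$ if it is ``at the end of a block of equal $s$-large letters that forms an $s$-descent'' and $x_j^+$ otherwise; concretely, a maximal run $x_j x_j \cdots x_j$ of length $t$ is rewritten with its first $t-1$ copies tagged so as to be descents of the lifted word and the last copy tagged according to whether the original position was an $s$-descent. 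One checks that this lift is a bijection between $\Wset(\vec a)$ on $X$ and the rearrangement classes on $X'$ with the $s$-large mass $a_j$ split over $(x_j^-,x_j^+)$ in all possible ways, and — crucially — that $\Des_s w$ (ordinary descents plus the run-internal and terminal bonus descents) equals the set of ordinary descents $\Des w'$ of the lifted word $w'$ on the totally ordered alphabet $X'$. Hence $\maj_s w=\maj w'$ and $\des_s w=\des w'$.

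With this bijection established, summing over all of $\Wset(\vec a)$ and all $\vec a$ reduces the left side of the claimed identity to the ordinary-descent generating function of~\eqref{eq:macmahon} over the enlarged alphabet $X'$. Applying~\eqref{eq:macmahon} verbatim to $X'$ gives
\begin{equation*}
	\sum_{\vec a\ge 0}\frac{A_{\vec a}(z,q)}{\qf{z}{q}{|\vec a|+1}}\,\vec u^{\vec a}
	= \sum_{n\ge 0} z^n \prod_{i=1}^{r-s}\qbinom{a_i+n}{a_i}_{\!\!\text{summed}}\ \cdots,
\end{equation*}
but more usefully, after dividing both sides of~\eqref{eq:macmahon} by $\qf{z}{q}{|\vec a|+1}$ and summing against monomials $\vec u^{\vec a}$, one gets a product over the letters of $X'$ of factors $\sum_{a\ge 0}\tqbinom{a+n}{a}u^a=1/\qf{u_{(\,\cdot\,)}q}{q}{n+1}$ by~\eqref{eq:binom2}. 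For a non-large letter $x_i$ this contributes $1/\qf{u_i q}{q}{n+1}$. For an $s$-large letter $x_j$ the two split letters $x_j^\pm$ each carry the \emph{same} formal variable $u_j$ (since a copy of $x_j$ becomes either $x_j^-$ or $x_j^+$), so their combined contribution is the coefficient of the total $u_j$-power, i.e. the Cauchy-type product $\sum_{b+c=a} \tqbinom{b+n}{b}\tqbinom{c+n}{c}$; by the $q$-Vandermonde/Cauchy identity this equals $\tqbinom{a+2n+1}{a}$ in one normalization, but it is cleaner to keep the two factors separate and recognize directly that $\sum_{a}\big(\sum_{b+c=a}\tqbinom{b+n}{b}\tqbinom{c+n}{c}\big)u_j^a = 1/\qf{u_j q}{q}{n+1}\cdot 1/\qf{u_j q}{q}{n+1}$ — then collapse one of these factors using the identity $1/\qf{u q}{q}{n+1} = \qf{-u q}{q}{?}\cdot(\text{something})$; in fact the correct simplification is that the \emph{pair} of split letters, together with the sign coming from which copy is chosen, produces exactly the factor $\qf{-q u_j}{q}{n}$ in the numerator rather than a square in the denominator. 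I would verify this last reduction by the basic identity $\sum_{b+c=a} q^{\binom{c}{2}} q^{c?}\tqbinom{n}{c}\tqbinom{b+n}{n}\cdots$, i.e. by matching the $u_j$-coefficients of $\qf{-qu_j}{q}{n}$ from~\eqref{eq:binom1} against the generating function of the split, using that among the $t$ copies of $x_j$ in a run only the last may be an $x_j^+$-or-$x_j^-$ ``free'' choice while the internal ones are forced.

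The main obstacle — and the step I would spend the most care on — is precisely this bookkeeping of the split: showing that the lift $w\mapsto w'$ is a genuine bijection that carries $(\maj_s,\des_s)$ to $(\maj,\des)$, and that when one pushes the map through the generating function the two formal variables attached to $x_j^\pm$ recombine to give a single numerator factor $\qf{-qu_j}{q}{n}$ rather than a denominator factor $\qf{qu_j}{q}{n+1}^{-2}$. The cleanest way to see this is to treat $x_j^-$ and $x_j^+$ asymmetrically in the order (so $x_j^-$ is a ``low'' letter and $x_j^+$ a ``high'' letter) and to observe that within any maximal equal-block of $x_j$'s in $w$ the lifted block must be of the form $x_j^+ \cdots x_j^+ x_j^- \cdots x_j^-$ (highs then lows) precisely when none of the run-internal positions is to be a descent except... — the correct shape is forced by requiring $\Des w' \cap (\text{run}) = \Des_s w \cap(\text{run})$, and this forces the $c$ copies equal to $x_j^-$ (say) to occupy a prescribed set of positions whose count is exactly what the $q^{\binom{c+1}{2}}\tqbinom{n}{c}$ term in~\eqref{eq:binom1} enumerates. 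Once that is pinned down, everything else is routine: substitute, apply~\eqref{eq:binom1} for each $s$-large letter and~\eqref{eq:binom2} for each of the remaining $r-s$ letters, and read off the stated right-hand side. I would present the bijection carefully, relegate the two $q$-binomial substitutions to a one-line invocation of~\eqref{eq:binom1}--\eqref{eq:binom2}, and note that the cases $s=0$ (recovering~\eqref{eq:macmahon}) and $s=r-1$ (the case actually used for~\eqref{eq:main2}) are the only ones needed downstream.
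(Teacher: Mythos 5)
This theorem is imported from Clarke--Foata: the paper offers no proof of its own, only the remark that the equivalent form~\eqref{eq:foata} ``is proved bijectively in~\cite{ClFo-3} by updating MacMahon's approach to~\eqref{eq:macmahon}.'' So your task was to supply an external argument, and the one you propose has a genuine gap at its central step. The letter-doubling bijection you describe cannot exist. If $w$ contains a maximal run of $t\ge 3$ copies of an $s$-large letter $x_j$, then by clause (2) of Definition~\ref{defn:xdescent} all $t-1\ge 2$ internal positions of that run are $s$-descents, two of them consecutive. But in the lifted word $w'$ those three consecutive positions carry letters from the two-element set $\{x_j^-,x_j^+\}$, and two consecutive ordinary descents would force $w'_p>w'_{p+1}>w'_{p+2}$, i.e.\ three strictly decreasing values --- impossible with only two symbols. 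Clause (3) is also fatal on its own: a terminal $s$-descent at position $i=n$ can never be an ordinary descent of any word of length $n$. So the claimed equality $\Des_s w=\Des w'$ fails, and the reduction to~\eqref{eq:macmahon} on the enlarged alphabet collapses. Your generating-function bookkeeping confirms the problem rather than fixing it: two copies of the same variable $u_j$ fed through~\eqref{eq:macmahon} produce $\qf{qu_j}{q}{n+1}^{-2}$, and there is no identity turning that into the required numerator factor $\qf{-qu_j}{q}{n}$; the passage you flag as ``the step I would spend the most care on'' is exactly where the argument breaks.

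The structural reason is that the $s$-large coordinates need the substitution $\qbinom{a_j+n}{a_j}\mapsto q^{\binom{a_j+1}{2}}\qbinom{n}{a_j}$, i.e.\ ``multisets from an $(n+1)$-set'' must become ``subsets of an $n$-set'' --- a weak-to-strict (an $\omega$-type, or transpose) switch, not a doubling. A repair along your lines would have to let each copy of $x_j$ acquire its own value (so that a run of length $t$ can be strictly decreasing), which changes the rearrangement class and forces a sum over compositions of $a_j$; that is a genuinely different and more delicate argument than the one written. The workable elementary route is instead to rerun MacMahon's own proof of~\eqref{eq:macmahon}: identify the coefficient of $z^n$ in $A_{\vec a}(z,q)/\qf{z}{q}{|\vec a|+1}$ with a sum over $w$ of partitions in a box of height $|\vec a|$ and width $n-\des_s w$, distribute the parts to the letters of $w$, and observe that the $s$-descent conditions make the inequalities strict precisely on the $s$-large coordinates, yielding $q^{\binom{a_j+1}{2}}\qbinom{n}{a_j}$ there and $\qbinom{a_i+n}{a_i}$ elsewhere; then~\eqref{eq:binom1} and~\eqref{eq:binom2} give the stated right-hand side. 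As written, your proposal does not establish the theorem.
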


Our interest in $s$-descents stems from the observation that the \aascent{s} and \bascent{s} of a word $w \in \words{k}$ are exactly the $0$-descents and $(k-1)$-descents of $w$ under the {reverse}  ordering of  $\{1,2,\ldots,k\}$.  We wish to track the distribution of these ascents over $\words{k}$,  so we seek a restriction of Theorem~\ref{thm:foata} that counts only words ending with a single copy of the \emph{smallest} symbol $x_1$.

Using~\eqref{eq:binom1} and~\eqref{eq:binom2} it is easy to see that Theorem~\ref{thm:foata} is equivalent to 
\begin{equation}
\label{eq:foata}
	%\sum_{\w \in \Wset(\vec{a})} q^{\maj_s \w} z^{\des_s \w}
	A_{\vec{a}}(z,q)
	= \qf{z}{q}{|\vec{a}|+1}\sum_{n \geq 0} z^n \prod_{i=1}^{r-s} \qbinom{a_i+n}{a_i} 
	 \prod_{j=r-s+1}^{r} q^{\binom{a_j+1}{2}} \qbinom{n}{a_j}.
\end{equation}
This identity  is proved bijectively in~\cite{ClFo-3} by updating MacMahon's approach to~\eqref{eq:macmahon}. (Note that it specializes to~\eqref{eq:macmahon} at  $s=0$.)   
We shall now show how the desired restriction of Theorem~\ref{thm:foata} is obtained from the limiting case $a_1 \rightarrow \infty$. 
Essentially, as $a_1$ grows, the words in $\Wset(\vec{a})$  with bounded major index are those that factor into a ``short'' word  ending in a single $x_1$  followed by a weakly increasing word in the small symbols $x_1,\ldots,x_{r-s}$. Thus $\lim_{a_1 \rightarrow \infty} A_{\vec{a}}(z,q)$ provides a predictable overcount of the words ending in a single $x_1$.

\begin{cor}
\label{cor:foata}
%Let $X = \{x_1,\ldots,x_r\}$ be nonempty and ordered by $x_1 < \cdots < x_r$. 
For a sequence $\vec{c}=(c_2,\ldots,c_{r})$ of nonnegative integers, let $\Wstar(\vec{c})$ be the set of  words on $X$ that contain $c_i$ copies of $x_i$ for $2 \leq i \leq r$ and end with a single $x_1$.  Then for $0 \leq s < r$ we have
$$
 \sum_{\vec{c} \geq 0} 
  \sum_{\w \in \Wstar(\vec{c})} q^{\maj_s \w} z^{\des_s \w} \vec{u}^\vec{c}
	= \qf{z}{q}{\infty}\sum_{n \geq 0} 
		\frac{\qf{-qu_{r-s+1}}{q}{n}\cdots\qf{-qu_r}{q}{n}}{\qf{q}{q}{n} \qf{qu_2}{q}{n} \cdots \qf{qu_{r-s}}{q}{n}} z^n,
$$
where $\vec{u}=(u_2,\ldots,u_{r})$ are indeterminates and  $\vec{u}^\vec{c} = u_2^{c_2} \cdots u_{r}^{c_{r}}$.
\end{cor}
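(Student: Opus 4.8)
The plan is to derive Corollary~\ref{cor:foata} from Theorem~\ref{thm:foata} (equivalently from~\eqref{eq:foata}) by taking the limit $a_1 \to \infty$. First I would set $\vec a = (a_1, c_2, \ldots, c_r)$ and observe that a word $w \in \Wset(\vec a)$ that ends in a single $x_1$ contributes to $A_{\vec a}(z,q)$ exactly as it does to $\sum_{w \in \Wstar(\vec c)} q^{\maj_s w} z^{\des_s w}$, since the positions of the non-$x_1$ letters and the definition of $s$-descent are unaffected. The point is that as $a_1 \to \infty$, \emph{every} word in $\Wstar(\vec c)$ eventually appears (for $a_1$ exceeding the number of $x_1$'s it already uses, pad on the left? no---) so I must be careful: padding extra $x_1$'s changes the word. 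The cleaner route is the factorization argument sketched in the text: a word $w \in \Wset(\vec a)$ with $\maj_s w = M$ bounded, for $a_1$ large, must consist of a prefix $u$ ending in a single $x_1$ followed by a weakly increasing run $x_1^{b_1} x_2^{b_2} \cdots x_{r-s}^{b_{r-s}}$ of the small symbols (these create no $s$-descents, since a weakly increasing word has $s$-descents only possibly at its very last position, which here is a small, i.e. non-$s$-large, symbol). Conversely every such concatenation lies in $\Wset(\vec a)$ for suitable $a_1$.

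Concretely, I would argue: $\lim_{a_1 \to \infty} A_{\vec a}(z,q) = \big(\sum_{\vec c \text{-part fixed}} \sum_{w \in \Wstar(\vec c)} q^{\maj_s w} z^{\des_s w}\big) \cdot \big(\sum_{b_1, \ldots, b_{r-s} \geq 0} q^{0} z^{0}\big)$---wait, the trailing increasing run does contribute to neither $\maj_s$ nor $\des_s$, so each choice of it contributes $1$, but there are infinitely many, so the bare limit diverges. This is exactly why one works with the \emph{normalized} series $A_{\vec a}(z,q)/\qf{z}{q}{|\vec a|+1}$. So instead I would start from~\eqref{eq:foata}, divide by $\qf{z}{q}{|\vec a|+1}$, multiply by $u_1^{a_1}$ with a fresh variable $u_1$, sum over $a_1 \geq 0$, and then \emph{set $u_1 = 1$ after extracting the $a_1 \to \infty$ behaviour}. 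More precisely: in~\eqref{eq:foata} the only $a_1$-dependence on the RHS (after dividing out $\qf{z}{q}{|\vec a|+1}$) is the factor $\tqbinom{a_1+n}{a_1}$, which by~\eqref{eq:limit} satisfies $\lim_{a_1\to\infty}\tqbinom{a_1+n}{a_1} = 1/\qf{q}{q}{n}$. Taking this limit in $A_{\vec a}(z,q)/\qf{z}{q}{|\vec a|+1}$ termwise (legitimate in the formal power series ring, since for fixed $q$-degree only finitely many $n$ contribute and each coefficient stabilizes) yields
\begin{equation*}
\lim_{a_1 \to \infty} \frac{A_{\vec a}(z,q)}{\qf{z}{q}{|\vec a|+1}}
= \sum_{n \geq 0} z^n \frac{1}{\qf{q}{q}{n}} \prod_{i=2}^{r-s} \qbinom{c_i+n}{c_i} \prod_{j=r-s+1}^{r} q^{\binom{c_j+1}{2}} \qbinom{n}{c_j}.
\end{equation*}
Then I would multiply by $\vec u^{\vec c} = u_2^{c_2}\cdots u_r^{c_r}$, sum over $\vec c \geq 0$, apply~\eqref{eq:binom1} to the $s$-large factors (turning $\sum_{c_j} q^{\binom{c_j+1}{2}}\tqbinom{n}{c_j} u_j^{c_j}$ into $\qf{-qu_j}{q}{n}$) and~\eqref{eq:binom2} to the small factors (turning $\sum_{c_i}\tqbinom{c_i+n}{c_i}u_i^{c_i}$ into $1/\qf{qu_i}{q}{n+1}$---but note the corollary's RHS has $\qf{qu_i}{q}{n}$ in the denominator, not $n+1$; this discrepancy is absorbed because $\qf{z}{q}{|\vec a|+1} \to \qf{z}{q}{\infty}$ contributes the ``extra'' factors, OR because the weakly-increasing tail is only in symbols $x_1,\ldots,x_{r-s}$ and the $x_1$-tail was already pulled off). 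I need to double-check the index bookkeeping here so the final denominator reads $\qf{q}{q}{n}\qf{qu_2}{q}{n}\cdots\qf{qu_{r-s}}{q}{n}$ as claimed.

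The main obstacle, and the step deserving the most care, is justifying the interchange of the $a_1 \to \infty$ limit with the summations and making rigorous the claim that $\lim_{a_1\to\infty} A_{\vec a}(z,q) / \qf{z}{q}{|\vec a|+1}$ \emph{equals} the generating function for $\Wstar(\vec c)$ rather than merely being related to it. The conceptual content is the factorization ``long word $=$ (word ending in single $x_1$) $\cdot$ (weakly increasing small tail)'' together with the fact that $1/\qf{z}{q}{|\vec a|+1}$ is precisely the generating function (in $z$, graded so that $z^n \mapsto $ contributions at shift $n$) that accounts for all possible tails---this is the standard MacMahon device whereby $\qf{z}{q}{N+1}^{-1} = \sum_n \tqbinom{a+n}{\cdot}\cdots z^n$ already encodes ``append a weakly increasing suffix.'' So the honest statement is that dividing by $\qf{z}{q}{|\vec a|+1}$ strips the suffix, the $a_1\to\infty$ limit then removes the constraint on the number of $x_1$'s in the prefix entirely (replacing $\tqbinom{a_1+n}{a_1}$ by $\qf{q}{q}{n}^{-1}$), and $\qf{z}{q}{|\vec a|+1} \to \qf{z}{q}{\infty}$ on the outside. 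I would present the factorization/limit as a short lemma-style paragraph, then do the $q$-binomial-theorem bookkeeping (applications of~\eqref{eq:binom1} and~\eqref{eq:binom2}) as a routine computation, flagging that it specializes correctly: at $s=0$ it should recover the generating function over all words ending in a single smallest letter, consistent with~\eqref{eq:macmahon}.
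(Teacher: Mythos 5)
Your overall strategy --- factor each $w\in\Wset(\vec{a})$ with small $\maj_s$ as a prefix ending in a single $x_1$ followed by a weakly increasing tail of small symbols, then let $a_1\to\infty$ in \eqref{eq:foata} and finish with \eqref{eq:binom1}--\eqref{eq:binom2} --- is exactly the paper's, and your factorization argument is sound. The gap is the one you flag yourself: the discrepancy between $\qf{u_i}{q}{n+1}$ and $\qf{qu_i}{q}{n}$, and neither of your two proposed resolutions is the correct one (the first, attributing it to $\qf{z}{q}{|\vec{a}|+1}\to\qf{z}{q}{\infty}$, is simply false; the second is only a hint). The point you are missing is that $\lim_{a_1\to\infty}A_{\vec{a}}(z,q)$ is \emph{not} the generating polynomial of $\Wstar(a_2,\ldots,a_r)$: the weakly increasing tail absorbs not only surplus $x_1$'s but also surplus copies of $x_2,\ldots,x_{r-s}$, so the stabilized coefficient of $q^m z^d$ counts prefixes $\wstar$ whose letter multiplicities satisfy $c_i\le a_i$ for $2\le i\le r-s$ rather than $c_i=a_i$ (only the $s$-large multiplicities are pinned at $c_j=a_j$). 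Consequently, after multiplying by $u_2^{a_2}\cdots u_r^{a_r}$ and summing over $a_2,\ldots,a_r\ge 0$, one must switch the order of summation: each fixed pair $(\vec{c},\wstar)$ is counted once for every choice of $a_i\ge c_i$, which produces exactly the factor $\prod_{i=2}^{r-s}(1-u_i)^{-1}$ on the left-hand side. This cancels against $\qf{u_i}{q}{n+1}=(1-u_i)\qf{qu_i}{q}{n}$ coming from \eqref{eq:binom2} on the right (note also that \eqref{eq:binom2} yields $1/\qf{u_i}{q}{n+1}$, not $1/\qf{qu_i}{q}{n+1}$ as written in your sketch), and this cancellation is precisely how the denominator $\qf{q}{q}{n}\qf{qu_2}{q}{n}\cdots\qf{qu_{r-s}}{q}{n}$ arises. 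With that reorganization inserted, your argument closes and coincides with the paper's proof.
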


%\TODO{Should it be noted that the series is well formed because ending with a single $x_1$ forces $\maj_s \w$ to be at least $\len{\w}-1$? That is, there are finitely many words in $\Wstar(\vec{c})$ with any $s$-major index. 

\begin{proof}
Let $\vec{a}=(a_1,\ldots,a_{r}) \geq 0$  and 
suppose $\w =\w_1 \cdots \w_{|\vec{a}|} \in \Wset(\vec{a})$  satisfies $\maj_s \w < a_1$.   We claim that $\w$ is of the form 
\begin{equation}
\label{eq:wfactor}
	\w = \wstar\, x_1^{a_1-c_1} x_2^{a_2-c_2} \cdots x_{r-s}^{a_{r-s}-c_{r-s}},
\end{equation}
where $\wstar$ ends with a single $x_1$,  contains $c_i$ copies of $x_i$ for $1 \leq i \leq r-s$, and contains $a_i$ copies of $x_i$ for $r-s < i \leq r$.

To see this, let $t$ be the largest (rightmost) $s$-descent of $\w$, taking $t=0$ if this does not exist.   Since $t \leq \maj_s \w < a_1$ we have $\w_i=x_1$ for some $i > t$.  But the definition of $t$ gives $\w_{t+1} \leq \w_{t+2} \leq \cdots \leq \w_{|\vec{a}|}$, where $\w_{|\vec{a}|}$ is not $s$-large, and so it must be the case that $\w_{t+1}=x_1$  and $\w_i \in \{x_1,\ldots,x_{r-s}\}$ for all $i > t$. Moreover, if $t>0$ then we have $w_t \neq x_1$ because $t$ is an $s$-descent and $x_1$ is not $s$-large (i.e. $s < r$).   Thus $w$ is  of the claimed form, with $\wstar=w_1 w_2 \cdots w_{t+1}$. 

We have shown that any word $w \in \Wset(\vec{a})$ with $\maj_s w < a_1$ factors as in~\eqref{eq:wfactor}.  Moreover, observe that $(\maj_s \wstar,\des_s \wstar)=(\maj_s \w, \des_s \w)$.
Accordingly, if $m < a_1$ then the coefficient of $q^m z^d$ in $A_{\vec{a}}(z,q)$  counts tuples $(c_1,\ldots,c_r,\wstar)$ such that
\begin{enumerate}[(i)]
\item	$1 \leq c_1 \leq a_1$,
\item	$0 \leq c_i \leq a_i$ for $2 \leq i \leq r-s$ and  $c_i=a_i$ for $r-s < i \leq r$,
\item	$\wstar \in \Wset(c_1,\ldots,c_r)$ ends in a single $x_1$, and
\item	$(\maj_s \wstar,\des_s \wstar)=(m,d)$. 
\end{enumerate}
In the limit $a_1 \rightarrow \infty$,  identity~\eqref{eq:foata} therefore yields
\begin{equation}
\label{eq:afterlimit}
  \sum_{\vec{c}}\,\, \sum_{\wstar \in \Wstar(\vec{c}) } q^{\maj_s \wstar} z^{\des_s \wstar} 
=
  \qf{z}{q}{\infty}\sum_{n \geq 0} \frac{z^n}{\qf{q}{q}{n}} \prod_{i=2}^{r-s} \qbinom{a_i+n}{a_i} 
	 \prod_{j=r-s+1}^{r} q^{\binom{a_j+1}{2}} \qbinom{n}{a_j},
\end{equation}
where the outer sum on the left extends over all $\vec{c}=(c_2,\ldots,c_{r})$ satisfying (ii). Note that~\eqref{eq:limit} has been applied on the right.

Now multiply~\eqref{eq:afterlimit} by $u_2^{a_2} \cdots u_r^{a_r}$  and sum over all $a_2,\ldots,a_r \geq 0$.
On the LHS this leads to
\begin{equation*}
%%
%%  Extra step suppressed
%%
%	\sum_{\vec{c} \geq 0}\,\, \sum_{\w \in \Wstar(\vec{c})} q^{\maj_s \wstar} z^{\des_s \wstar} \Big(\sum_{a_i \geq c_i} u_2^{a_2}\cdots u_{r-s}^{a_{r-s}}\Big) u_{r-s+1}^{c_{r-s+1}}\cdots u_r^{c_r} \\
%=
	\sum_{\vec{c} \geq 0}\,\, \sum_{\wstar \in \Wstar(\vec{c})} q^{\maj_s \wstar} z^{\des_s \wstar} \vec{u}^{\vec{c}} \cdot \prod_{i=2}^{r-s}  (1-u_i)^{-1}
\end{equation*}
upon switching the order of summation, while
\eqref{eq:binom1} and~\eqref{eq:binom2} can be applied on the RHS to give
$$
	 \qf{z}{q}{\infty}\sum_{n \geq 0} 
		\frac{\qf{-qu_{r-s+1}}{q}{n}\cdots\qf{-qu_r}{q}{n}}{\qf{q}{q}{n} \qf{u_2}{q}{n+1} \cdots \qf{u_{r-s}}{q}{n+1}} z^n.
$$
This completes the proof, since $\qf{u_i}{q}{n+1}=(1-u_i)\qf{qu_i}{q}{n}$.
\end{proof}

\subsection{Proof of Theorem~\ref{thm:main}}

Theorem~\ref{thm:main} follows quickly from piecing together Theorems~\ref{thm:burge} and~\ref{thm:bburge}  with Corollary~\ref{cor:foata}.

As noted above, if $\w \in \words{k}$ then the \aascent{s} of $w$  are the same as its $0$-descents with respect to the reverse ordering $k < k-1 < \cdots < 1$ of  $\{1,\ldots,k\}$.    According to Theorem~\ref{thm:burge}, the LHS of~\eqref{eq:main1} is therefore obtained by applying Corollary~\ref{cor:foata} to $X=\{k,k-1,\ldots,1\}$ with $s=0$ and then making the substitutions $q \leftarrow q^{k-1}$ followed by $u_{k+1-i} \leftarrow y_i/q^{k-1-i}$ for $1 \leq i \leq k-1$. It is easy to check that doing so yields the RHS of~\eqref{eq:main1}.
%or equivalently $u_i \leftarrow y_{k+1-i}/q^{i-2}$ for $2 \leq i \leq k$, 
%Then $\qf{qu_i}{q}{n}$ becomes $\qf{q^{k+1-i}y_{k+1-i}}{q^{k-1}}{n}$. \end{proof}

Similarly, the \bascent{s}  of a word $w \in \words{k+1}$ match its $k$-descents with respect to the reverse ordering of $\{1,\ldots,k+1\}$.  According to Theorem~\ref{thm:bburge} we obtain the LHS of~\eqref{eq:main2} by applying  Corollary~\ref{cor:foata}  to $X=\{k+1,\ldots,1\}$ with $s=k$ and substituting $q \leftarrow q^{k}$ followed by $u_{k+2-i} \leftarrow y_{i}/q^{k-i}$, $1 \leq i \leq k$.  This results in the RHS of~\eqref{eq:main2}.

\section{Interaction with Foata's  Transformation}
\label{sec:connections}

\newcommand{\hk}{\theta}
\newcommand{\hkburge}[1][k]{\Omega_{\theta,#1}}
\newcommand{\myfoata}{F}
\newcommand{\foatak}[1][k]{\phi}
\newcommand{\goback}[1]

 Consider the map  $\myfoata$ on $\{1,\ldots,k\}^*$ defined by $\myfoata(w) := (\foata(w))^{-1}$, where $v^{-1}$ is the reverse reading of the word $v$ and $\foatak$ denotes Foata's second fundamental transformation  invoked with respect to the reverse ordering of $\{1,\ldots,k\}$.  
 Then  $\myfoata(w)$ is a rearrangement of $w$ which satisfies $\inv(\myfoata(w))=\amaj(w)$.  Moreover, from the construction of $\foatak$ (see~\cite[Ch. 10]{Lot}) it is easy to verify that a word $w \in \{1,\ldots,k\}^*$ of length $n > 1$ satisfies  $\{w_{n-1} \neq k, w_n=k\}$ if and only if $v=\myfoata(w)$ satisfies $\{v_1 \neq k, v_n=k\}$. Hence $\myfoata$ is a bijection from $\words{k}$ to $\circwords{k}$.

 Theorems~\ref{thm:burge} and~\ref{thm:inv} lead us to consider the following diagram. 
\begin{equation}
\label{eq:diagram}
\begin{tikzcd}[row sep=2.5em]
\Pset \arrow[rr,dashed] \arrow[d,"\akburge"] &&  \Pset  \\
\words{k} \arrow[rr,"\myfoata"] && \circwords{k} \arrow[u,"\gkinv^{-1}"]
\end{tikzcd}
\end{equation}
The upper composition $\gkinv \circ \myfoata\circ \akburge^{-1}$ is a nontrivial size-preserving bijection on $\Pset$, and so it is natural to ask whether it has any interesting properties.  For $k=2$ the answer is affirmative, as we now explain.

Foata's transformation has a simple non-recursive description when the underlying alphabet is of size 2. This has been explored in detail by Sagan and Savage~\cite{Sag-Sav}. Let
$$
	w = 2^{m_0} 1^{n_0} 2^{m_1} 1^{m_1} \cdots 2^{m_r} 1^{n_r} 2 
$$
be a general word in $\words{2}$, 
where  $m_0 \geq 0$ and all other $m_i, n_i$ are positive.
Then from~\cite[Prop. 2.2]{Sag-Sav} we obtain
\begin{align}
\label{eq:sagan}
%	\foatak(w)
%	&=
%	1 2^{m_r-1} 1 2^{m_{r-1}-1} 1 \cdots 2^{m_1-1} 1 2^{m_0} 
%	1^{n_0-1} 2 1^{n_1-1} 2 \cdots 1^{n_r-1} 2. \\
	\myfoata(w)
	&=
	2 1^{n_r-1} 2 1^{m_{r-1}-1} \cdots 2 1^{n_0-1} 
	2^{m_0} 1 2^{m_1-1} 1 2^{m_2-1} \cdots 1 2^{m_r-1} 1
\end{align}
This word describes the northeast boundary of a Young diagram, with  2's and 1's indicating east and north steps, respectively. The corresponding partition is $\l = \gkinv[2]^{-1}(\myfoata(w))$, and it is easy to verify from~\eqref{eq:sagan} that $\l$ is of size $\inv(\myfoata(w))=\maj(w)$ and  has Durfee square of side $\aasc(w)=d$.  In fact, $\aAsc(w)$ is  the set of \emph{hook lengths} of $\l$, defined below.   (See~\cite[Cor. 2.4]{Sag-Sav} and also~\cite[Lem. 3.2]{CorElS}.)

\begin{defn}
Let $\l$ be a partition with Durfee square of side $d$.  
For $1 \leq i \leq d$, the $i$-th \emph{diagonal hook} of  $\l$ is the set $H_i = \{(i+x,i-y) \in \l \,:\, x,y\geq 0\}$ of all cells in its Young diagram directly below or directly to the right of $(i,i)$. 
The quantities $|H_i|$ are called the \emph{hook lengths} of $\l$.
\end{defn}

\begin{exmp}
Let  $w=22121112112$, with $\aAsc(w) = \{3,7,10\}$ and $\amaj(w)=20$. Applying~\eqref{eq:sagan} yields  $\myfoata(w)=21211222111$, which describes the boundary of the partition $\l=(5,5,5,2,2,1)$ displayed in Figure~\ref{fig:boundarypath}. Thus $\myfoata(w)$ is the \gkcode{2} of $\l$.  Note that $|\l|=20$, its Durfee square is of size $\aasc(w)=3$, and its hook lengths are 10, 7, and 3. 
\end{exmp}

From these observations and Theorem~\ref{thm:burge} we conclude that $\gkinv[2]^{-1} \circ \myfoata\circ \akburge[2]$ is a size- and length-preserving bijection on $\Pset$ that translates 2-measure into size of Durfee square, i.e., number of hooks.  This gives a combinatorial (though recursively bijective) proof of Theorem~\ref{thm:abd}, as sought by the authors of~\cite{ABD}. We now demonstrate that this same bijection had already been stated in a different guise by Burge.

In his seminal papers~\cite{Bur-1,Bur-2} Burge gave three distinct correspondences between partitions and binary words.  One of these, $\akburge[2]$, is built from  transformations $\{\a_1,\a_2\}$ on $\Pset$ as described previously.  Another is constructed similarly, but using two different transformations (which we call $\hk_1$ and $\hk_2$) that act on  partitions  by enlarging  diagonal hooks along the vertical and horizontal axes separately.
The precise definitions are best given in terms of \emph{Frobenius notation}.

\newcommand{\smallfrob}[2]{\left(\begin{smallmatrix} #1 \\ #2 \end{smallmatrix}\right)}
\newcommand{\frob}[2]{\Big(\begin{matrix} #1 \\ #2 \end{matrix}\Big)}

Let $H_1,\ldots,H_d$ be the diagonal hooks of a partition $\l$, and let $a_i$ (respectively, $b_i$) be the number of cells in $H_i$ strictly to the right (resp.,  below) cell $(i,i)$.  Clearly $a_1 > a_2 > \cdots > a_d \geq 0$ and $b_1 > b_2 > \cdots > b_d \geq 0$, with $|H_i| = a_i+b_i+1$.  Moreover, since the $a_i$ and $b_i$ completely specify $\l$, we can alternatively denote $\l$ by
$\smallfrob{a_1 & \cdots & a_d}{b_1 & \cdots &b_d}.$ 
For example, $\l=(5,5,5,2,2,1)$ is denoted $\smallfrob{4 & 3& 2}{5&3&0}$. 

The transformations $\hk_1$ and $\hk_2$ are then defined on $\Pset$ as follows:
\begin{align*}
	\hk_1\!\smallfrob{a_1 & \cdots & a_d}{b_1 & \cdots & b_d}
	&:= \begin{cases}
	    \smallfrob{a_1 & \cdots & a_d & 0}{b_1+1 & \cdots & b_d+1 & 0} &\text{if $a_d > 0$,} \\
	     \smallfrob{a_1 & \cdots & a_d}{b_1+1 & \cdots & b_d+1} &\text{otherwise}.
	\end{cases} \\
	\hk_2\!\smallfrob{a_1 & \cdots & a_d}{b_1 & \cdots & b_d}
	&:= 
	    \smallfrob{a_1+1 & \cdots & a_d+1}{b_1 & \cdots & b_d}. \end{align*}
Note that $\hk_1$ extends each hook vertically and also adds a new hook (of length 1) whenever it is sensible to do so.
See Figure~\ref{fig:hookbuilding}.  
\begin{figure}
\newcommand{\andthen}[1]{\xrightarrow{#1}}
$$
\ytableausetup{boxsize=1ex,centertableaux}
\begin{array}{ccccccccccccccccc}
\epsilon & \andthen{\hk_1} &
\ydiagram{1} & \andthen{\hk_1} &
\ydiagram{1,1} & \andthen{\hk_2} &
\ydiagram{2,1} & \andthen{\hk_2} & \ydiagram{3,1} 
&\andthen{\hk_1} & \ydiagram{3,2,1} &\andthen{\hk_2}  & \ydiagram{4,3,1} &\andthen{\hk_1} & \ydiagram{4,3,3,1}
& \andthen{\hk_1} 
& \ydiagram{4,3,3,3,1}\\
\rule{0pt}{3ex}  & & \smallfrob{0}{0} & & \smallfrob{0}{1}
& & \smallfrob{1}{1} & & \smallfrob{2}{1} & & \smallfrob{2 & 0}{2 & 0} & & \smallfrob{3 & 1}{2 & 0} & &\smallfrob{3 & 1 & 0}{3 & 1 & 0}
& & \smallfrob{3 & 1 & 0}{4 & 2 & 1}
\end{array}
$$
\caption{Building $\l=(4,3,3,3,1)$ by iterative application of the operators $\hk_1$ and $\hk_2$.}
\label{fig:hookbuilding}
\end{figure}

The functions $\hk_1$ and $\hk_2$ are clearly injective and their images partition $\Pset$.  They therefore induce an encoding $\map{\hkburge[2]}{\Pset}{\words{2}}$ in the same manner as $\akburge[2]$ and $\gkinv[2]$ were constructed from $\{\a_1,\a_2\}$ and $\{\g_1,\g_2\}$ in  Sections~\ref{sec:burgeconst} and~\ref{sec:invconst}.    
For example, we get $\hkburge[2](4,3,3,3,1)=1212212$ by reading the chain of operations displayed in Figure~\ref{fig:hookbuilding} from  right-to-left. Burge presents this bijection in~~\cite[\S2]{Bur-1} and  notes that the partition $\l$ corresponding to $w$ is of size $\amaj(w)$, contains $\aasc(w)$ hooks, and is of length equal to the number of 1s in $w$. Indeed, it is routine to check that $\hkburge[2]^{-1}$ is precisely the map $\gkinv[2]^{-1} \circ \myfoata$ discussed above, i.e., the northeast diagonal of~\eqref{eq:diagram}.    (Burge goes on to derive a number of  interesting identities by carefully analyzing the correspondence 
 $\akburge[2]^{-1} \circ \hkburge[2]$ on $\Pset$.)

%Among his observations is the fact that partitions of $n$ with $d$ hooks are in correspondence with partitions of $n$ which have 2-measure $d$ (in his language, ``contain $d$ pairs'').

When $k > 2$ the map $\gkinv^{-1} \circ \myfoata$ continues to translate words $w \in \words{k}$ with $\amaj(w)=n$ into partitions $\l$ of size $n$.  However, the boundary-path interpretation of $w$ breaks down and  the meaning of $\aasc(w)$ in relation to $\l$ is unclear.   We are left to wonder if Burge's hook-building transformations $\{\hk_1,\hk_2\}$ admit a ``$(k-1)$-modular'' extension similar to how $\{\a_1,\a_2\}$ have been generalized to $\{\a_1,\ldots,\a_k\}$. 
% It may also be of interest to explore replacing  Foata's map with other similar bijections that translate between Mahonian statistics on words.

%%%%%%%%%%%%%%%%%%%%%%%%%%%%%%%%%%%%%%%%%%%%
%%%%%%%%%%%%%%%%%%%%%%%%%%%%%%%%%%%%%%%%%%%%

\section{Schur's Theorem and Generalizations}
\label{sec:schur}

\newcommand{\incwords}[1]{\mathcal{W}^{\scriptscriptstyle +}_{#1}}
\newcommand{\gapset}[1]{G^{\pi}_{#1}}
\newcommand{\Dsep}[1][k]{\mathcal{D}_{#1}}
\newcommand{\Dstrict}[2][k]{\mathcal{D}_{#1,#2}}
\newcommand{\Dseppi}[1][k]{\mathcal{D}^{\pi}_{#1}}
\newcommand{\Dstrictpi}[2][k]{\mathcal{D}^{\pi}_{#1,#2}}

In this final section we show how the $[\b,k]$-encoding can be applied to give concise proofs of some well-known results concerning  gaps between successive parts of  partitions.

\begin{defn}
 Let $\Dsep$ be the set of partitions  $\l=(\l_1,\l_2,\ldots)$ such that $\l_i - \l_{i+1} \geq k$ for all $i < \len{\l}$. For $J \subset \{1,\ldots,k\}$,  let  $\Dstrict{J}$ be the set of all $\l \in \Dsep$ that additionally satisfy $\l_i - \l_{i+1} \neq k$ whenever $\l_i \equiv j \pmod{k}$ and $j \in J$. In particular,
 $\Dstrict{\emptyset}=\Dsep$.
\end{defn}

For instance: $\Dsep[1]=\Dset$,  $\Dsep[2]$ is the set of \emph{super distinct} partitions that arise in the celebrated Rogers-Ramanujan identities (see also~\cite{IKM} for an interesting algebraic manifestation), and $\Dstrict[4]{\{1,3\}}$ is the set of partitions in which parts differ pairwise by $\geq 4$ and odd parts differ pairwise by $\geq 6$.

\begin{defn}
Let $\incwords{k+1}$ be the subset of $\words{k+1}$ consisting of words  that do not contain any occurrences of a substring $ji$ with $i < j \leq k$.   
Equivalently, $\incwords{k+1}$ consists of all concatenations of  words of the form $1^{a_1} 2^{a_2} \cdots k^{a_k} (k+1)^m$ with $a_i \geq 0$ and $m > 0$.
\end{defn}

Let $\l \in \Dset$ and let $f \in \BFset$ be its frequency list.  Then $\l \in \Dsep$ if and only if there are no pairs $(s,t)$ with $f_s=f_t=1$ and  $0 < |t-s| < k$.  Let us say such a pair is \emph{tight} for $f$.  Now consider the iterative creation of $f$ according to its code $w=\bkburge(f)$, i.e. 
$$
	(0) \rightarrow \b_{w_n}(0) \rightarrow \b_{w_{n-1}}\b_{w_n}(0) \rightarrow \cdots \rightarrow w_{\b_1}\cdots w_{\b_n}(0) = f.
$$  
From definition of $\b_1,\ldots,\bstar$ it is easy to see that tight pairs are formed only by the application of $\b_i$ immediately followed by $\b_j$, where $i < j \leq k$.  Moreover, if $(s,t)$ is tight for $g$, then $\b_j(g)$ has either the tight pair $(t,s+k)$ or one of the form $(s,t')$ with $s < t' \leq t$. So once one tight pair is created, at least one will always remain. Similarly, pairs $f_s=f_{s+k}=1$ are formed only by successive applications of $\b_j$, where $j \equiv s \pmod{k}$, and again these are persistent.  Thus we have:

\begin{lem}
\label{lem:gap}
Let $\l=(\l_1,\l_2,\ldots) \in \Dset$ and let $w = \bkburge(\l)$.  Then $\l \in \Dstrict{J}$ if and only if $w \in \incwords{k+1}$ and $w$  avoids the substring $jj$ for all $j \in J$.
\end{lem}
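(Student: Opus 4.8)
The plan is to unwind the iterative construction $f = \b_{w_1}\cdots\b_{w_n}(0)$ and track when ``bad'' pairs of support elements (either a \emph{tight} pair $f_s=f_t=1$ with $0<|t-s|<k$, or a pair $f_s=f_{s+k}=1$ with $s\equiv j\pmod k$ for some $j\in J$) are created, showing these features are both \emph{introduced} precisely by the forbidden substrings and \emph{persistent} once introduced. The discussion immediately preceding Lemma~\ref{lem:gap} already sketches this; the task is to organize it into a clean two-direction argument, so I would structure the proof around the two implications, each split into ``creation'' and ``persistence'' analyses at the level of a single application of some $\b_j$.

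First I would set up notation: write $g \mapsto \b_j(g)$ for a generic step, recall from the definition of $\bstar$ (equivalently from~\eqref{eq:support}) exactly how $S(\b_j(g))$ relates to $S(g)$ — namely $\b_j$ increments $f_j$ by $1$ and then shifts each maximal arithmetic progression of step $k$ in the support (starting from position $j$) up by $k$. The key local facts to extract are: (a) if $g$ has no tight pair, then $\b_j(g)$ has a tight pair only when $\bstar$ leaves some element fixed adjacent (within distance $<k$) to a shifted one, which forces the previous code symbol to have been some $i<j\le k$; and (b) if $g$ already has a tight pair $(s,t)$ with $s<t$, then after applying $\b_j$ either $(s,t')$ with $s<t'\le t$ remains tight, or $(t,s+k)$ does, so tightness never disappears. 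The parallel statements for $J$-violating pairs $f_s=f_{s+k}=1$ are: such a pair is created only by two successive applications of $\b_j$ with $j\equiv s\pmod k$ — i.e.\ a substring $jj$ with $j\equiv s$ — and once present it persists under all subsequent $\b$'s. These local lemmas are routine case checks on the shift operator.

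From the local facts the two implications follow by induction on $n$ (reading the code right-to-left as in the construction). For the ``only if'' direction: if $\l\in\Dstrict{J}$ then $f$ has no tight pair and no $J$-violating pair, so by persistence no intermediate partition in the chain ever had one either; hence the creation criterion was never triggered, meaning $w$ contains no substring $ji$ with $i<j\le k$ (so $w\in\incwords{k+1}$) and no substring $jj$ with $j\in J$. For the ``if'' direction: if $w\in\incwords{k+1}$ avoids $jj$ for $j\in J$, then at no step is a bad pair created, and since $(0)$ has none, $f$ has none, i.e.\ $\l\in\Dstrict{J}$. One should also double-check the boundary: the code ends in a single $k+1$ (so the last symbol is $\b_{k+1}$, which only shifts and never creates tight pairs by itself), and $\incwords{k+1}$ is characterized as concatenations of $1^{a_1}\cdots k^{a_k}(k+1)^m$ with $m>0$, which is exactly ``no descent $ji$ among $\{1,\dots,k\}$'', consistent with Theorem~\ref{thm:Fbburge}.

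The main obstacle is the persistence claim for tight pairs, specifically verifying that when $\b_j$ is applied to a $g$ that already contains a tight pair, the pair cannot be ``destroyed'' — the shift could move one element of the pair but not the other, and one must confirm that in every case a (possibly different) tight pair survives. This requires a careful case analysis depending on whether each of $s$, $t$ lies in a progression that gets shifted, whether $j\le s$, $s<j\le t$, or $j>t$, and what the interaction with the increment at position $j$ does; the subtle subcase is when $t$ is shifted to $t+k$ but $s$ is not, where one argues $s+k$ is then in the support and $(s, s+k)$ or $(t, s+k)$ is tight. Once that bookkeeping is done, everything else is bookkeeping of the same flavor but easier, and the $J$-pair analysis is strictly simpler because those pairs involve a fixed residue class and the $\b_j$'s act very transparently on each class.
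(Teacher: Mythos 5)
Your proposal follows essentially the same route as the paper, which establishes the lemma by exactly this creation-plus-persistence analysis of tight pairs and of pairs $f_s=f_{s+k}=1$ along the chain $(0)\to\b_{w_n}(0)\to\cdots\to f$; the paper only sketches it in the paragraph you cite, asserting the same persistence statement (either $(t,s+k)$ or some $(s,t')$ with $s<t'\le t$ survives). One small correction to your creation mechanism (a): when $g$ has no tight pair, the entire support of the tail is shifted by $\bstar$, so the first tight pair arises from the newly incremented position $j$ colliding with the untouched unit at position $i<j$ left by the preceding step $\b_i$ (forcing the substring $ji$ with $i<j\le k$), not from a fixed element sitting next to a shifted one; this does not affect the structure of your argument.
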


Lemma~\ref{lem:gap} reflects the fact that $\bkburge$ acts rather trivially on $k$-distinct partitions.  Indeed, $\Dsep$ can also be characterized as the set of partitions  whose \emph{$k$-modular diagrams} contain  no two rows of the same length, and the
$[\b,k]$-code of any such $\l$ can  be read directly from its diagram.  See Figure~\ref{fig:kmodular}.
\begin{figure}
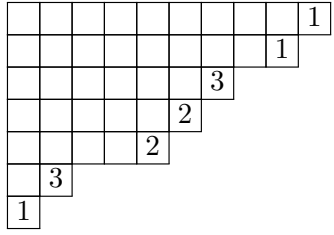

\begin{center}
\ytableausetup{boxsize=2.5ex}
\ytableaushort {
\ \ \ \ \ \ \ \ \ 1,
\ \ \ \ \ \ \ \  1,
\ \ \ \ \ \ 3,
\ \ \ \ \ 2,
\ \ \ \ 2,
\ 3,
1
}
\end{center}
\caption{The $3$-modular diagram of $\l=(28,25,21,17,14,6,1) \in \Dstrict[3]{\{3\}}$. The cell at the end of each row is filled with a weight $\in \{1,2,3\}$. All other cells are empty and of weight 3. The  code  $\bkburge[3](\l)=1344223411$ is  obtained by scanning from left-to-right and recording  the entry in bottom cell of each column, or $4$ ($=k+1$) if this cell is empty.  }
\label{fig:kmodular}
\end{figure}

%\begin{proof}
%Let $f \in \BFset$ correspond to $\l$.  We wish to prove that
%\begin{itemize}
%\item	$w \in \incwords{k+1}$ if and only $f_s=f_t=1$ implies $0 < |s-t| < k$, and
%\item	$w$ contains substring $jj$ if and only if $f_s=f_{s+k}=1$ for some $s \equiv  j \pmod{k}$.
%\end{itemize}
%We do so by considering the iterative creation of $f$ according to $w$, i.e. $(0) \rightarrow \b_{w_n}(0) \rightarrow \b_{w_{n-1}}\b_{w_n}(0) \rightarrow \cdots \rightarrow w_{\b}(0) = f$.  
%Let us say a pair of indices $(s,t)$ is \emph{tight} for $g\in \BFset$ if  $0< t-s < k$ and $g_s=g_t=1$. Suppose $g \in \Bset_i$ has no tight pairs.
%From definition of $\b_j$ we see that $\b_j(g)$ has either  the lone tight pair $(i,j)$  in case $i < j \leq k$, or zero tight pairs otherwise.   Now suppose $(s,t)$ is tight for $g$. Then for any $j$ we find that $\b_j(g)$ has either the tight pair $(t,s+k)$ or one of the form $(s,t')$ with $s < t' \leq t$.  Thus, in the creation of $f$ via $w$, tight pairs are formed only by the application of $\b_i$ immediately followed by $\b_j$, where $i < j \leq k$.  Moreover, once one tight pair is created, at least one will always remain.  This proves the first claim.  A similar analysis applies to the the second.  A pair $(s,t)$ with $t-s=k$ and $g_s=g_t=1$ can be created only by successive applications of $\b_j$, where $j \equiv s \pmod{k}$. 
%\end{proof}

There are a number of results in partition theory concerning restrictions on the gaps $\l_i-\l_{i+1}$ between consecutive parts.  One of the earliest and best-known of these the following theorem of Schur, which was later generalized and proved combinatorially by Bressoud~\cite{Bres}.

\begin{thm}[{Schur~\cite{Schur}}]
\label{thm:schur}
Let $s_{n}$ be the number of partitions of $n$ into distinct parts, each of which is $\equiv \pm 1 \pmod{3}$. Let $t_{n}$ be the number of partitions of $n$ in which all parts differ pairwise by at least 3, with parts divisible by 3 differing by at least 6.  Then $s_n=t_n$ for all $n \geq 0$. 
\end{thm}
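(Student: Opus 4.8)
The plan is to prove Schur's Theorem by translating both sides of the claimed identity into statements about words in $\words{4}$ via the bijection $\bkburge[3]\colon\Dset\to\words{4}$ of Theorem~\ref{thm:bburge}, and then exhibiting a weight-preserving bijection between the two resulting families of words. Concretely, set $k=3$ throughout. The partitions counted by $t_n$ are exactly the elements of $\Dstrict[3]{\{3\}}$, so by Lemma~\ref{lem:gap} their $[\b,3]$-codes are precisely the words $w\in\incwords{4}$ that avoid the substring $33$; by Theorem~\ref{thm:bburge}(2) such a word $w$ encodes a partition of size $3\,\bmaj(w)-2a_1-a_2$, where $a_i$ counts occurrences of $i$ in $w$. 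So $t_n$ equals the number of words $w\in\incwords{4}$ avoiding $33$ with $3\,\bmaj(w)-2a_1-a_2=n$.

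For the left-hand side, a partition into distinct parts each $\equiv\pm1\pmod 3$ is the same thing as a subset of $\{1,2,4,5,7,8,\dots\}$, i.e. a partition in $\Dset$ whose frequency list $f$ has $f_j=0$ whenever $3\mid j$. By Theorem~\ref{thm:bburge}(1), $\bkburge[3](f)$ has $a_3=0$, so such partitions biject with words in $\words{4}$ containing no copies of the symbol $3$, and the size of the partition is again $3\,\bmaj(w)-2a_1-a_2$ (with $a_3=0$ this is $3\,\bmaj(w)-2a_1-a_2$). Hence $s_n$ equals the number of words $w\in\words{4}$ with no occurrence of the symbol $3$ and $3\,\bmaj(w)-2a_1-a_2=n$. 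It then suffices to produce a bijection between (i) words in $\words{4}$ on the alphabet $\{1,2,4\}$, and (ii) words in $\incwords{4}$ on $\{1,2,3,4\}$ avoiding $33$, that preserves the pair $(\bmaj(w), (a_1,a_2))$ — equivalently, preserves $\bmaj$ and the count of $1$s and $2$s, with the symbol $3$ being ``free'' to appear in family (ii) without affecting the weight.

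The crux is this last bijection. A word in $\incwords{4}$ avoiding $33$ is a concatenation of blocks $1^{b_1}2^{b_2}3^{\varepsilon}4^{m}$ with $\varepsilon\in\{0,1\}$, $m>0$, $b_i\ge 0$; a word in $\words{4}$ on $\{1,2,4\}$ is simply a concatenation of blocks $1^{b_1}2^{b_2}4^{m}$. I would proceed by identifying where a lone $3$ can be ``absorbed'': a single $3$ inserted between a maximal run of $2$s and the following run of $4$s creates a $\b$-ascent exactly where a $23$ or $34$ juncture sits, and one checks (using Definition~\ref{defn:ascents} for $\basc_\beta$, i.e.\ $s$-descents with $s=k=3$) that inserting or deleting such a $3$ changes $\bmaj$ in a controlled, invertible way while leaving $a_1,a_2$ fixed. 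The plan is to show that the presence/absence of the optional $3$ in each block of family (ii) corresponds to a binary choice that can be re-encoded as a lengthening of the $4$-run in family (i), with the shift in block positions accounting exactly for the change in $\bmaj$. Pinning down this position-bookkeeping — verifying that the net effect on $\maj_\beta = \sum_{i\in\bAsc(w)}i$ of deleting each $3$ is canceled by a compensating reindexing — is the main obstacle; everything else is a routine unwinding of the definitions. An alternative, cleaner route, which I would present if the direct bijection proves fiddly, is to run the same argument at the level of the $q$-series: apply Theorem~\ref{thm:main} specialized to $k=3$, set $y_3=0$ on one side (killing parts divisible by $3$) and compare with the generating function for $\Dstrict[3]{\{3\}}$ obtained from Lemma~\ref{lem:gap}, reducing Schur's identity to a finite $q$-series manipulation; but the bijective proof is more in the spirit of this paper and is the one I would attempt first.
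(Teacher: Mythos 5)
Your overall strategy---push both sides through $\bkburge[3]$, use Lemma~\ref{lem:gap} to characterize the image of the gap-condition partitions, and then seek a weight-preserving bijection between the two resulting word families---is exactly the paper's, and your identification of the two families (words on $\{1,2,4\}$ ending in a single $4$, versus block-increasing words avoiding $33$) is correct. The gap is in the reduction you then state: you claim it suffices to build a bijection preserving $\bmaj$ and the counts $a_1,a_2$ \emph{separately}, with the symbol $3$ ``free.'' No such bijection exists; only the combination $3\bmaj(w)-2a_1-a_2$ can be preserved, and its individual constituents genuinely change. The smallest case already shows this: the unique partition $(3)$ counted by $t_3$ has code $34$ with $(\bmaj,a_1,a_2)=(1,0,0)$, while the unique partition $(2,1)$ counted by $s_3$ has code $214$ with $(\bmaj,a_1,a_2)=(2,1,1)$. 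Your proposed mechanism---delete each $3$ and compensate by lengthening a run of $4$'s---fixes $a_1=a_2$, so on the word $34$ it would have to produce a word over $\{4\}$ alone ending in a single $4$; the only such word is $4$, of weight $0\neq 3$. The ``position-bookkeeping'' you defer is therefore not a technicality that can be pinned down: the plan fails on the first nontrivial example.

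The paper's resolution is to let all three statistics move in lockstep: replace each occurrence of $34$ by $21$ (a terminal $34$ becoming $214$). A local check against Definition~\ref{defn:ascents} shows this trades a $\b$-ascent at position $i$ for one at $i+1$ while leaving all other ascents in place, so $\bmaj$ increases by $1$ and $a_1,a_2$ each increase by $1$, for a net change of $3-2-1=0$ in the weight; invertibility follows from the unique decomposition of the target words into blocks $1^a2^b4^c$ and $1^a2^b3\,4^c$. Your fallback via Theorem~\ref{thm:main} with $y_3=0$ does yield the generating function for the $s_n$ side, but you would still owe an independent derivation of the series for the gap-condition side, which the proposal does not supply.
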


%Note that $s_n$ and $t_n$ count certain partitions of size $n$ in  $\Dset$ and $\Dstrict[3]{\{3\}}$, respectively. Both sets  correspond through the $[\b,4]$-encoding, leading to a tidy bijective proof of Schur's Theorem.  

We now present a tidy proof of Schur's Theorem via the $[\b,3]$-encoding.  
The following notation will be helpful.

\begin{defn}
For $w \in \words{k+1}$ let $|w|_\b := k\bmaj w - \sum_{i=1}^k(k-1)b_i$, where $b_i$ is the number of occurrences of $i$ in $w$.
\end{defn}

\begin{proof}[Proof of Theorem~\ref{thm:schur}]
%Note that $t_n$ counts partitions of size $n$ in $\Dstrict[3]{\{3\}}$.
Let $\mathcal{S}$ be the set of words on $\{1,2,4\}$ that end with a single 4 and let $\mathcal{T}$ be the set of words in $\incwords{4}$ that avoid substring 33.  Then taking $k=3$ in Theorem~\ref{thm:bburge} and Lemma~\ref{lem:gap} we have
\begin{equation}
\label{eq:sntn}
	s_n = |\{ w \in \mathcal{S} \,:\, \bmaj w=n \}|
	\qquad\text{and}\qquad	t_n =  |\{w \in \mathcal{T} \,:\, \bmaj w=n\}|.
\end{equation}
For $w \in \mathcal{T}$, let $S(w)$ be the word obtained from $w$ by replacing each occurrence of 34 with 21, with the caveat that a terminal occurrence of 34 is replaced with 214.   For instance, $S(4122\mathbf{34}1\mathbf{34}144\mathbf{34}) = 4122\mathbf{21}1\mathbf{21}414\mathbf{214}$.  We claim that $S$ is a bijection from $\mathcal{T}$ to $\mathcal{S}$ satisfying $|w|_\b =|S(w)|_{\b}$. The result then follows from~\eqref{eq:sntn}.

To prove the claim, first note that words $w \in \mathcal{T}$ can be uniquely decomposed into  blocks of the form $1^a 2^b 34 4^c$ or $1^a 2^b4 4^c$, with $a,b,c \geq 0$.  Thus $S^{-1}$ simply replaces every occurrence of $34$ with $21$.  
Now suppose $w=w_1 \cdots w_n \in \mathcal{T}$ has $w_i w_{i+1}=34$. Let $w'$ be the word obtained by changing this pair to $21$. 
For $j \neq i,i+1$ we  observe that $j \in \bAsc w \Leftrightarrow j \in \bAsc w'$.  Evidently $\bAsc \w$ contains $i$ but not $i+1$, while $\bAsc w'$ contains $i+1$ but not $i$. Thus $\bmaj w' = \bmaj w +1$. Since the numbers of 1's and 2's each increase by 1 in the transition from $w$ to $w'$, we get $|w'|_\b = |w|_\b + 3 - 2(1)-1(1) = |w|_\b$.
\end{proof}

Our proof of Schur's Theorem differs from Bressoud's~\cite{Bres},   but after some unwinding it can be seen to match  the iterative diagrammatic approach described by Pak in~\cite[\S4.5.1]{Pak}. The simplicity of our bijection $S$ is of course due to  details being hidden by the $[\b,3]$-encoding.

Note that $S$  preserves the number of \bascent{s}, i.e. $\basc S(w)=\basc w$, and thus the statement of Schur's Theorem  can  be refined in the obvious way to account for $3$-measure.   Moreover, with  no additional effort we can obtain the following generalization  due to Alladi and Gordon~\cite{AllGor-1}.  Schur's Theorem is the special case $M=3$, $a=1$, $b=2$. The case $a=1$, $b=M-1$ for arbitrary $M$ is due to Bressoud~\cite{Bres}.  (The result is stated in~\cite{AllGor-1,AllGor-2} without reference to $k$-measure.)

\newcommand{\Skr}[1][M]{\mathcal{S}}
\newcommand{\Tkr}[1][M]{\mathcal{T}}

\begin{thm}[{Generalized Schur's Theorem~\cite{AllGor-1,AllGor-2}}] 
\label{thm:genschur} Let $M \geq 3$ and fix positive integers $a,b$ with $0 < a < b < M$.  Let $\Skr$ be the set of partitions into distinct parts congruent to $a$ or $b \pmod{M}$.  Let $\Tkr$ be the set of partitions into distinct parts congruent to $a, b$ or $a+b \pmod{M}$ such that
\begin{itemize}
\item	all parts are $\geq a$, 
\item	the difference between any two parts is $\geq M$, and 
\item	the difference between any two parts $\equiv a+b \pmod{M}$ is  $> M$.  
\end{itemize}
For any $n, m \geq 1$, the number of partitions in $\Skr$ of size $n$ and $k$-measure $m$ equals the number of partitions in $\Tkr$ of size $n$ and $k$-measure $m$.\end{thm}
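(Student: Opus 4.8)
The plan is to route everything through the $[\b,M]$-encoding $\bkburge[M]$ of Theorem~\ref{thm:bburge} (applied with $k=M$): I translate the two partition families $\mathcal{S}$ and $\mathcal{T}$ into explicit languages of words in $\words{M+1}$ and reduce the theorem to a single bijection between those languages that preserves both the statistic $|\cdot|_\b$ (engineered so that $|\bkburge[M](\l)|_\b=|\l|$, by Theorem~\ref{thm:bburge}) and the $\b$-ascent count $\basc$. Let $c\in\{1,\ldots,M\}$ be the representative of $a+b$ modulo $M$, so $c=a+b$ if $a+b<M$, $c=M$ if $a+b=M$, and $c=a+b-M$ if $a+b>M$. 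A short check shows $a,b,c$ are pairwise distinct and, crucially, that $c>b$ exactly when $a+b\le M$ while $c<a$ exactly when $a+b>M$; the intermediate case $a<c<b$ never occurs. This dichotomy is the only genuinely new ingredient compared with Theorem~\ref{thm:schur}, where $a=1$ always places us in the first regime.

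First I carry out the translation. Since the occurrences of a symbol $i\in\{1,\ldots,M\}$ in $\bkburge[M](\l)$ count the parts of $\l$ congruent to $i$ modulo $M$, the encoding restricts to a bijection from $\mathcal{S}$ onto the set $\mathcal{S}'$ of words on $\{a,b,M+1\}$ ending in a single $M+1$. For $\mathcal{T}$ I combine Theorem~\ref{thm:bburge} with Lemma~\ref{lem:gap}: the requirements ``consecutive parts differ by $\ge M$'' and ``parts $\equiv a+b$ differ by $>M$'' together say precisely that $\l\in\mathcal{D}_{M,\{c\}}$, which by Lemma~\ref{lem:gap} holds iff $w:=\bkburge[M](\l)$ lies in $\incwords{M+1}$ and avoids the substring $cc$. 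The requirement ``all parts $\ge a$'' is automatic when $a+b\le M$ and, when $a+b>M$, forbids exactly a part of size $c=a+b-M<a$; since within $\mathcal{D}_M$ such a small part must be the least part, reading the code off the $M$-modular diagram (cf.\ Figure~\ref{fig:kmodular}) shows this happens iff $w_1=c$. Hence $\bkburge[M]$ maps $\mathcal{T}$ bijectively onto the set $\mathcal{T}'$ of words $w\in\incwords{M+1}$ that use only the symbols $a,b,c,M+1$, avoid the substring $cc$, and satisfy $w_1\ne c$ whenever $c<a$. By Theorem~\ref{thm:bburge} both translations preserve size and $M$-measure, with $\mu_M(\l)=\basc(w)$, so the theorem reduces to finding a bijection between $\mathcal{T}'$ and $\mathcal{S}'$ that preserves $|\cdot|_\b$ and $\basc$.

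The bijection generalizes the map $S$ from the proof of Theorem~\ref{thm:schur}. In $w\in\mathcal{T}'$, decomposed into $\incwords{M+1}$-blocks $1^{a_1}\cdots M^{a_M}(M+1)^m$, the symbol $c$ occurs at most once per block; when $c>b$ it sits just before that block's run of $M+1$'s, and when $c<a$ it sits just after an $M+1$. I define $S\colon\mathcal{T}'\to\mathcal{S}'$ by replacing, in the case $c>b$, every occurrence of $c(M+1)$ by $ba$ (a terminal $c(M+1)$ by $ba(M+1)$), and, in the case $c<a$, every occurrence of $(M+1)c$ by $ba$. Bijectivity is transparent: a word in $\mathcal{T}'$, being in $\incwords{M+1}$, has no substring $ba$ (since $a<b\le M$), so every $ba$ in $S(w)$ was produced by $S$ and the inverse reverses these substitutions. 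For the statistics, each elementary move deletes one $c$ and one $M+1$ and inserts one $a$ and one $b$, so the sum $\sum_{i<M}(M-i)b_i$ changes by $M+c-a-b$, which equals $M$ when $a+b\le M$ and $0$ when $a+b>M$; inspecting the at most three affected positions shows $\bmaj$ changes by $1$, respectively $0$, and that the number of $\b$-ascents is unchanged. Hence $|\cdot|_\b=M\bmaj-\sum_{i<M}(M-i)b_i$ and $\basc$ are both preserved, and composing with the translations above proves Theorem~\ref{thm:genschur} — the $M$-measure refinement coming for free, since $S$ was built to keep $\basc$ fixed.

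I expect the main obstacle to be the regime $a+b>M$, which is invisible in Theorem~\ref{thm:schur}. Two points there need care. First, proving rigorously that ``all parts $\ge a$'' corresponds to $w_1\ne c$: this rests on analysing $\min\Irk$ via the backward-cluster description and confirming that a part equal to $c<a$ is forced to be the smallest part of a $\mathcal{D}_M$-partition. Second, checking that the left-anchored move $(M+1)c\mapsto ba$ disturbs neither $\bmaj$ nor any $\b$-ascent through all the boundary subcases — whether the preceding block ends in one or several $M+1$'s, whether $c$ is followed by $a$'s, by $b$'s, or only by $M+1$'s, and whether the $c$ occurs at the very end of $w$. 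Each such check is a short case analysis, but collectively they carry the content of the generalization; everything else is a faithful transcription of the $M=3$ argument.
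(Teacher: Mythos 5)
Your proof is correct and is essentially the paper's own argument: translate $\mathcal{S}$ and $\mathcal{T}$ through the $[\b,M]$-code, then swap $c(M+1)\leftrightarrow ba$ when $a+b\le M$ and $(M+1)c\leftrightarrow ba$ when $a+b>M$, with the same bookkeeping for $\bmaj$, the letter counts, and $\basc$. One small point in your favour: your description of the $\mathcal{T}$-words in the regime $a+b>M$ (membership in $\incwords{M+1}$, i.e.\ avoidance of the descents $ac$, $bc$, $ba$ together with $cc$ and $w_1\ne c$) is the accurate one --- the paper instead lists the ascents $ca$, $cb$ as forbidden, which is a slip, since e.g.\ for $M=4$, $a=2$, $b=3$ the partition $(14,9,2)\in\mathcal{T}$ has code $25125$ containing $ca=12$.
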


\begin{proof}
If $(a+b) \leq M$ then the proof mimics that of Theorem~\ref{thm:schur}, but taking $k =M$  and replacing symbols $\{1,2,3,4\}$ with $\{a,b,a+b,M+1\}$.  The exchange $(a+b)(M+1) \rightarrow ba$ on $w \in \mathcal{T}$ increases $\bmaj w$ by 1,  increases the numbers of $a$'s and $b$'s by one each, and decreases by one the number of $(a+b)$'s.  So the net change to $|w|_{\b}$ is again $0=M - (M-a)-(M-b)+(M-(a+b))$.

If $a+b > M$  we let $c=a+b-M$, noting that $0 < c < a < b$. Then $\Skr$ is  again in correspondence with words on $\{a,b,M+1\}$ that end with a single $M+1$. However $\Tkr$ is is now correspondence with words on $\{c,a,b,M+1\}$ that end in a single $M+1$, do not begin with $c$, and avoid the substrings $\{ca, cb, cc, ba\}$. The exclusion of first symbol $c$ ensures that all parts of the resulting partition are $\geq a$.  In this case we move from $\mathcal{S}$ to $\mathcal{T}$ by swapping each adjacent pair $(M+1)c$ for $ba$. We find that $\bAsc w$ is unaltered by this swap, so the net change to $|w|_{\b}$ is  $0=- (M-a)-(M-b)+(M-c)$.
\end{proof}

We emphasize that our proof of Theorem~\ref{thm:genschur} is not novel, but rather a reorganization of one that has come before. The $[\b,k]$-encoding of $\Dset$ into $\words{k+1}$ is very closely related to Alladi and Gordon's \emph{method of weighted words}, and ultimately the combinatorics underlying our approaches   is equivalent.  %The benefit of our framework is that it illuminates 
%the simplicity of selected partition identities by hiding various details in the encoding

For the sake of completeness, we close this section by briefly discussing a related class of partition identities analyzed in~\cite{AllGor-2}.

\newcommand{\pil}[1][\lambda]{{#1}^{\pi}}
\newcommand{\piw}[1][w]{{#1}^{\pi}}
\newcommand{\stabword}{\mathcal{V}_{k+1}^{j}}

Let $\Sym_k$ be the group of permutations on $\{1,\ldots,k\}$. We define an action $\l \mapsto \pil$ of $\Sym_k$ on $\Dset$ as follows.

\begin{defn}
If $\l \in \Dset$ and $\pi \in \Sym_k$, let   $\pil \in \Dset$  be is the  partition satisfying  
$
	qk+\pi(r) \in \pil \iff qk+r \in \l, 
$
for $q \geq 0$ and $1 \leq r \leq k$.
\end{defn}

Equivalently, $\pil$ is obtained from $\l$ by applying $\pi$ to the entries in its $k$-modular diagram.  For example,  $\pi=312 \in \Sym_3$ and $\l=(28,25,21,17,14,6,1)$ give $\pil = (30, 27, 20, 16, 13, 5, 3)$.

Some elementary bookkeeping yields the following translation of Lemma~\ref{lem:gap}.

\begin{lem}
Fix $k \geq 2$ and $\pi \in \Sym_k$. For $1 \leq r \leq k$ define 
$$
	\gapset{r} := \{\pi(r)+k-\pi(j) \,:\, 1 \leq j \leq r\} \cup [\pi(r)+k, \infty).
$$
For $J \subset \{1,\ldots,k\}$, let $\Dstrictpi{J}$ be the set of partitions $\l=(\l_1,\l_2,\ldots)$ such that 
\begin{equation*}
		\l_i \equiv \pi(r) \pmod{k} \implies
		\l_{i}-\l_{i+1} \in \gapset{r}, \qquad 1 \leq i < \len{\l},
\end{equation*}	
and $\l_i - \l_{i+1} \neq k$ whenever $\l_i \equiv \pi(j) \pmod{k}$ and $j \in J$.  Then  $\nu \in \Dstrict{J}$ if and only if $\pil[\nu] \in \Dstrictpi{J}$.
\end{lem}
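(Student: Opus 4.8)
The plan is to unwind the definition of the $\Sym_k$-action $\l \mapsto \pil$ in terms of $k$-modular diagrams and show directly that the gap conditions defining $\Dstrict{J}$ are transported to those defining $\Dstrictpi{J}$ by the map $\nu \mapsto \pil[\nu]$. Since the $k$-modular diagram of a distinct-part partition records each part $\l_i$ as a row of ``full'' cells of weight $k$ together with a final cell of weight equal to the residue of $\l_i$ modulo $k$, applying $\pi$ to those residues simply replaces each part $\l_i = q_i k + r_i$ (with $1 \le r_i \le k$) by $q_i k + \pi(r_i)$. So the only thing that changes is the residue bookkeeping; the ``height'' $q_i$ of each row is untouched.

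First I would fix $\nu \in \Dstrict{J}$ with parts $\nu_1 > \nu_2 > \cdots$, write $\nu_i = q_i k + r_i$ with $1 \le r_i \le k$, and observe that $\pil[\nu]$ has parts $\mu_i := q_i k + \pi(r_i)$. (One should check these remain strictly decreasing and hence genuinely form a partition in $\Dset$: this follows because $\nu \in \Dsep$ forces $q_i \ge q_{i+1}$, with $q_i = q_{i+1}$ only if $r_i > r_{i+1} + k$, which is impossible; so actually $q_i > q_{i+1}$ whenever a collision of residues could occur — more precisely $\nu_i - \nu_{i+1} \ge k$ gives $q_i - q_{i+1} \ge 1$ always, and the residues can be permuted freely without destroying strict monotonicity.) Then I would compute $\mu_i - \mu_{i+1} = (q_i - q_{i+1})k + \pi(r_i) - \pi(r_{i+1})$ and compare it against membership in $\gapset{r}$ when $r_i = r$, i.e. when $\mu_i \equiv \pi(r) \pmod k$.

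The key computation is the following equivalence. Because the residues $r_1, r_2, \ldots$ appearing among the parts of $\nu$ occupying a fixed ``column'' of the $k$-modular diagram run through $1, 2, \ldots$ without repetition as we move down (this is exactly what $\nu \in \Dsep$ says, reformulated), the index $j$ with $r_{i+1} = r_i$-after-one-step is constrained, and $\nu_i - \nu_{i+1} = k$ is equivalent to $q_i = q_{i+1} + 1$ and $r_i < r_{i+1}$, i.e. $r_{i+1} = r_i - 1$ won't happen; rather the gap-$k$ case corresponds to $\nu_{i+1}$ being the part directly below $\nu_i$ in the diagram. Translating through $\pi$: the set of admissible values of $\nu_i - \nu_{i+1}$ given $\nu_i \equiv r \pmod k$ is $\{r + k - j : 1 \le j \le r\} \cup [r+k,\infty)$ (with the ``$k$-strict when $j \in J$'' caveat removing the value $k$), and applying $\pi$ to both $r$ and the ``competing residue'' $j$ turns this into precisely $\gapset{r}$ together with the same caveat phrased via $\pi(j)$. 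I would make this precise by noting that $\gapset{r}$ as defined is literally the image of the $\Dstrict{}$-gap-set under the substitution $r \mapsto \pi(r)$, $j \mapsto \pi(j)$, and that the $J$-condition ``$\l_i \equiv \pi(j) \pmod k$ with $j \in J$'' matches ``$\nu_i \equiv j \pmod k$ with $j \in J$'' under the same correspondence.

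The main obstacle I anticipate is the bookkeeping around which residue ``$j$'' is the relevant one when comparing $\nu_i$ and $\nu_{i+1}$: one must argue carefully that in a $k$-distinct partition, if $\nu_i$ has residue $r$ then the possible residues of $\nu_{i+1}$, together with the corresponding possible gaps, are exactly indexed by $1 \le j \le r$ for the ``small'' gaps and are unrestricted for gaps $\ge r + k$. This is the combinatorial heart of the ``elementary bookkeeping'' the authors allude to, and it is where the precise form of $\gapset{r}$ comes from. Once that lemma-within-the-lemma is nailed down, the rest is a direct substitution argument: $\pi$ is a bijection on $\{1,\ldots,k\}$, so the forward and backward implications are symmetric (replace $\pi$ by $\pi^{-1}$), giving the ``if and only if'' with no extra work. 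I would therefore organize the write-up as: (1) describe $\pil[\nu]$ on the level of $(q_i, r_i)$; (2) state and prove the residue/gap correspondence for a single consecutive pair in a $k$-distinct partition; (3) apply $\pi$ and match the two gap-sets and the two $J$-conditions termwise; (4) note symmetry under $\pi \leftrightarrow \pi^{-1}$ to conclude.
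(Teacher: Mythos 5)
Your approach is the right one and is essentially the ``elementary bookkeeping'' that the paper deliberately omits: the paper states this lemma with no proof beyond the remark that it translates Lemma 5.4, and your direct verification --- writing $\nu_i = q_i k + r_i$ with $1 \le r_i \le k$, sending it to $\mu_i = q_i k + \pi(r_i)$, noting that $\nu_i-\nu_{i+1}\ge k$ forces $q_i \ge q_{i+1}+1$ (so the $\mu_i$ stay strictly decreasing), and then splitting into the cases $q_i-q_{i+1}=1$ with $r_{i+1}=j\le r_i$ (producing the finite part $\{\pi(r)+k-\pi(j)\}$ of $G^{\pi}_{r}$) versus $q_i-q_{i+1}\ge 2$ (producing the tail $[\pi(r)+k,\infty)$) --- is exactly the intended argument, and the ``if and only if'' does follow since the case analysis is reversible. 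One local slip to fix in the write-up: you assert that $\nu_i-\nu_{i+1}=k$ is equivalent to $q_i=q_{i+1}+1$ and $r_i< r_{i+1}$, whereas it is equivalent to $q_i=q_{i+1}+1$ and $r_i=r_{i+1}$; this corrected form is precisely what you need to match the two $J$-conditions, since then $\mu_i-\mu_{i+1}=k+\pi(r_i)-\pi(r_{i+1})=k$ as well, and it also explains why the excluded value $k$ of $G^{\pi}_{r}$ is the term $j=r$ of the finite set. With that correction your four-step outline goes through verbatim.
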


\begin{exmp}
Let $k=3$ and $\pi=132$.  Then $\gapset{1} = \{3\} \cup [4,\infty)$, $\gapset{2} = \{5,3\} \cup [6,\infty)$, and $\gapset{3} = \{4,2,3\} \cup [5,\infty)$.  Thus $\Dstrictpi[3]{\{2\}}$ consists of partitions $\l$ satisfying
\begin{equation}
\label{eq:andrewsgap}
	\l_i -\l_{i+1} \geq 
	\begin{cases}
	3 &\text{if $\l_i \equiv \pi(1) \equiv 1 \pmod{3}$} \\
	5 &\text{if $\l_i \equiv \pi(2) \equiv 3 \pmod{3}$} \\
	2 &\text{if $\l_i \equiv \pi(3) \equiv 2 \pmod{3}$}.
	\end{cases}
\end{equation}
\end{exmp}

The transformation $\l \mapsto \pil$ is bijective from $\Dstrict{J}$ to $\Dstrictpi{J}$ but clearly not size-preserving.  This can be addressed via composition with an action of $\Sym_k$ on $\incwords{k+1}$.
   
\begin{defn}
For $\pi \in \Sym_k$ and $w\in \incwords{k+1}$, define the word $\piw \in \incwords{k+1}$ as follows.  First  decompose $w$ into maximal increasing blocks of the form $1^{a_1} 2^{a_2} \cdots k^{a_k} (k+1)^{m}$. Then replace each such block with $1^{b_1} 2^{b_2} \cdots k^{b_k} (k+1)^m$, where $b_i = a_{\pi(i)}$.
\end{defn}

For instance, with $\pi=312$ and $w = 41334411112234 \in \incwords{4}$ we have $\piw = 41124412222334$.  

%Clearly  $w \mapsto w^{\pi}$ maps $\incwords{k+1}$ bijectively to itself,  with inverse $w \mapsto w^{\pi^{-1}}$.   Moreover it is clear from definition that $\bmaj(\piw) = \bmaj(w)$ for $w \in \incwords{k+1}$
%. (Thus we  have an action of $\Sym_k$ on $\incwords{k+1}$. This does not extend to $\words{k+1}$, as $w \mapsto w^{\pi}$ is not generally bijective on the larger set. For example, with $k=2$ and  $\pi=21$ we have $\piw=123$ for both $w=123$ and $w=213$.)   

%The actions of $\Sym_k$ on $\Dset$ and $\incwords{k+1}$ are compatible in the following sense. The transformation
%$$
% \l \xrightarrow{\bkburge} 
% w \xrightarrow{\ \ }
% \piw \xrightarrow{\bkburge^{-1}}
% \nu
%$$
%is a bijection on $\Dsep$ but it is not size preserving.

\begin{prop}
\label{prop:bijection}
Let $k \geq 2$ and $J \subset \{1,\ldots,k\}$.  For any $\pi \in \Sym_k$ the composite mapping 
\begin{equation}
\label{eq:trans}
 \l \xrightarrow{\bkburge} 
 w \xrightarrow{\ \ }
 \piw \xrightarrow{\bkburge^{-1}}
 \nu \xrightarrow{\ \ }
 \pil[\nu]
\end{equation}
is a bijection from $\Dstrict{J}$ to $\Dstrictpi{\pi^{-1}(J)}$ which preserves both size and number of parts in each residue class modulo $k$.  
%moreover, it restricts to a bijection from $\Dstrict{J}$ to $\Dstrictpi{\pi^{-1}(J)}$ for all $J \subset \{1,\ldots,k\}$.
\end{prop}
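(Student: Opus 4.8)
The plan is to realise the composite \eqref{eq:trans} as a chain of four bijections, each onto a set one can name, and then to track size and residue-class counts along the chain.

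First I would pin down the intermediate targets. Write $\mathcal{T}_{J}$ for the set of words in $\incwords{k+1}$ that avoid the substring $jj$ for every $j\in J$. By Lemma~\ref{lem:gap}, $\bkburge$ restricts to a bijection $\Dstrict{J}\to\mathcal{T}_{J}$. The map $w\mapsto\piw$ is a bijection of $\incwords{k+1}$ with inverse $v\mapsto v^{\pi^{-1}}$: the decomposition of a word into maximal increasing blocks $1^{a_1}\cdots k^{a_k}(k+1)^m$ is canonical, and applying $\pi$ then $\pi^{-1}$ to the exponent vector $(a_1,\dots,a_k)$ of each block is the identity. A block contains $jj$ exactly when its exponent of $j$ is $\geq 2$, and the action carries the exponent vector of a block of $w$ to $(a_{\pi(1)},\dots,a_{\pi(k)})$ in $\piw$; hence $\piw$ has an exponent $\geq 2$ on some $j\in\pi^{-1}(J)$ in some block iff $w$ has an exponent $\geq 2$ on some $\ell\in J$ in some block, so $w\mapsto\piw$ restricts to a bijection $\mathcal{T}_{J}\to\mathcal{T}_{\pi^{-1}(J)}$. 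Then $\bkburge^{-1}$ maps $\mathcal{T}_{\pi^{-1}(J)}$ bijectively onto $\Dstrict{\pi^{-1}(J)}$ (Lemma~\ref{lem:gap} with $\pi^{-1}(J)$ in place of $J$), and $\nu\mapsto\pil[\nu]$ — the invertible $\Sym_k$-action on $\Dset$ — restricts, by the lemma above applied with $\pi^{-1}(J)$, to a bijection $\Dstrict{\pi^{-1}(J)}\to\Dstrictpi{\pi^{-1}(J)}$. Composing the four maps yields the asserted bijection.

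The fact driving the two statistics is a clean description of $\b$-ascents on $\incwords{k+1}$: since such a word has no substring $ji$ with $i<j\le k$ and ends in $k+1$, one checks immediately that $\bAsc(w)=\{\,i:w_i\le k\,\}$. Hence, if $w=C_1\cdots C_t$ with $C_j=1^{a^{(j)}_1}\cdots k^{a^{(j)}_k}(k+1)^{m_j}$ and $A_j:=a^{(j)}_1+\cdots+a^{(j)}_k$, the set $\{i:w_i\le k\}$ depends only on the sequence of pairs $(A_j,m_j)$; because $w\mapsto\piw$ merely permutes the $a^{(j)}_i$ within each block, it fixes every $A_j$ and $m_j$, so $\bAsc(\piw)=\bAsc(w)$ and in particular $\bmaj(\piw)=\bmaj(w)$. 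Now let $a_i$ be the number of occurrences of $i$ in $w:=\bkburge(\l)$ and set $\nu:=\bkburge^{-1}(\piw)$; the number of occurrences of $i$ in $\piw$ is $a_{\pi(i)}$, so by Theorem~\ref{thm:bburge}(1) the partition $\l$ has $a_s$ parts $\equiv s\pmod{k}$ while $\nu$ has $a_{\pi(s)}$ of them. Since $\pil[\nu]$ sends a part $qk+r$ of $\nu$ (with $1\le r\le k$) to $qk+\pi(r)$, it has as many parts $\equiv s\pmod{k}$ as $\nu$ has $\equiv\pi^{-1}(s)\pmod{k}$, namely $a_s$, matching $\l$; this gives the residue-class statement, hence also preservation of the number of parts. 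For the size, the same substitution gives $|\pil[\nu]|-|\nu|=\sum_r(\pi(r)-r)a_{\pi(r)}$, and Theorem~\ref{thm:bburge}(2) for $\nu$ together with $\bmaj(\piw)=\bmaj(w)$ gives $|\nu|=k\bmaj(w)-\sum_i(k-i)a_{\pi(i)}$; adding these, combining the coefficient of each $a_{\pi(i)}$ via $(\pi(i)-i)-(k-i)=\pi(i)-k$, and reindexing $j=\pi(i)$ collapse the right-hand side to $k\bmaj(w)-\sum_j(k-j)a_j$, which is $|\l|$ by Theorem~\ref{thm:bburge}(2) for $w=\bkburge(\l)$.

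I expect the single load-bearing step to be the identity $\bAsc(w)=\{i:w_i\le k\}$ on $\incwords{k+1}$ and its consequence $\bAsc(\piw)=\bAsc(w)$: once this is available the bijectivity is formal (Lemma~\ref{lem:gap} together with the lemma above), and the size claim is a one-line reindexing whose only subtle point — that the two correction terms cancel — reduces to the trivial identity $(\pi(i)-i)-(k-i)=\pi(i)-k$. Everything else is routine bookkeeping; a minor technical nuisance worth double-checking is that the maximal-block decomposition really is canonical, so that $w\mapsto\piw$ is well defined and invertible on all of $\incwords{k+1}$.
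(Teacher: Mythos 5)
Your proposal is correct and follows essentially the same route as the paper: factor the composite into bijections via Lemma~\ref{lem:gap}, observe that $w$ and $\piw$ share the same $\b$-ascents so $\bmaj$ is preserved, and then do the residue-class and size bookkeeping through Theorem~\ref{thm:bburge}. The only difference is that you supply details the paper leaves implicit (the canonicity of the block decomposition and the identity $\bAsc(w)=\{i:w_i\le k\}$ on $\incwords{k+1}$), and these are verified correctly.
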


%Suppose $\l \in \Dset$ has $a_i$ parts congruent to $i$ mod $k$.  Then
% $w=\bkburge(\l)$ contains $a_i$ copies of $i$, and
%$$
%	|\pil| = |\l|+ \sum_{i=1}^k (\pi(i)-i)a_i
%	= |w|_{\b} + \sum_{i=1}^k (\pi(i)-i)a_i
%	=k\bmaj(w) - \sum_{i=1}^k (k-\pi(i))a_{i}
%$$

\begin{proof}
Clearly the given map is bijective from $\Dsep$ to $\Dseppi$.  Using Theorem~\ref{thm:bburge} we get, for all $i$, that
$\l$ has $a_i$ parts congruent to $i$ mod $k$ if and only if 
$\pil[\nu]$ has $b_i=a_{\pi(i)}$ parts congruent to $\pi(i)$ mod $k$. 
%\begin{align*}
%\text{$\l$ has $a_i$ parts $\equiv i$ mod $k$}
%&\iff
%\text{$w$ contains $a_i$ copies of $i$} \\
%&\iff
%\text{$\piw$ contains $a_{\pi(i)}$ copies of $i$} \\
%&\iff
%\text{$\nu$ has $a_{\pi(i)}$ parts $\equiv i$ mod $k$} \\
%&\iff
%\text{$\pil[\nu]$ has $a_{\pi(i)}$ parts $\equiv \pi(i)$ mod $k$}.
%&\iff
%\text{$\pil[\nu]$ has $a_{i}$ parts $\equiv i$ mod $k$} \\
%\end{align*}
Clearly $\bmaj(w)=\bmaj(\piw)$ as $w$ and $\piw$ share the same $\b$-ascents.  Therefore
\begin{align*}
	|\pil[\nu]|
	&= |\nu|+ \sum_{i=1}^k (\pi(i)-i)b_i \\
%	&= |\piw|_{\b} + \sum_{i=1}^k(\pi(i)-i)b_i \\
	&= \Bigg(k\bmaj(\piw) - \sum_{i=1}^k(k-i)b_i\Bigg) + \sum_{i=1}^k(\pi(i)-i)b_i \\
	&= k\bmaj(w) - \sum_{i=1}^k(k-\pi(i))b_i \\	
%	&= k\bmaj(w) - \sum_{i=1}^k(k-i)a_i \\	
	&= |\l|.
\end{align*}
Finally, $w$ avoids $jj$ if and only if $\piw$ avoids $\bar{j}\bar{j}$, where $\bar{j}=\pi^{-1}(j)$. So $\l \in \Dstrict{J} \iff \nu \in \Dstrict{\pi^{-1}(J)} \iff \pil[\nu] \in \Dstrictpi{\pi^{-1}(J)}$.
\end{proof}

Before concluding with an example, we observe that the bijection $\l \mapsto \pil[\nu]$ of Proposition~\ref{prop:bijection} has the following succinct description  in terms of $k$-modular diagrams:  
(1) Decompose the  diagram of $\l$ into maximal ``staircases'', (2) apply $\pi^{-1}$ to every entry, (3) sort the entries  in each staircase so  they increase weakly from bottom to top, and (4) apply $\pi$ to every entry. The resulting configuration is the diagram of $\pil[\nu]$. See Figure~\ref{fig:bijection} for a demonstration.  From this perspective it is clear that the bijection preserves size, number of parts in each residue class, and also number of parts $\leq km$ for any given $m$.

\newcommand{\YC}[1]{*(light-gray)#1}
\begin{figure}
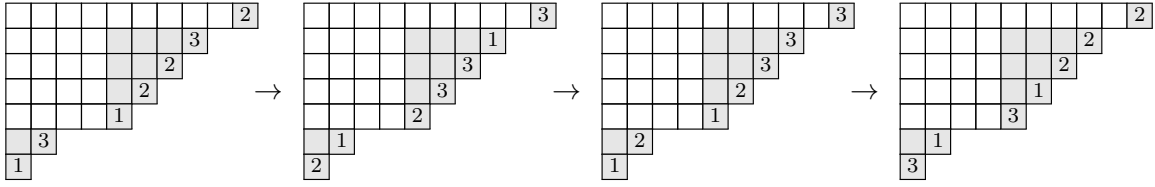

$$
\ytableausetup{smalltableaux,centertableaux}
\begin{ytableau}
\ & & & & & & & &   & \YC{2} \\
\ & & & & \YC{} & \YC{} & \YC{} & \YC{3}  \\
\ & & & & \YC{} & \YC{} & \YC{2} \\
\ & & & & \YC{} & \YC{2} \\
\ & & & & \YC{1} \\
\YC{} & \YC{3} \\
\YC{1}
\end{ytableau}
 \hspace{-1ex}\rightarrow\hspace{1ex}
\begin{ytableau}
\ & & & & & & & &   & \YC{3} \\
\ & & & & \YC{} & \YC{} & \YC{} & \YC{1}  \\
\ & & & & \YC{} & \YC{} & \YC{3} \\
\ & & & & \YC{} & \YC{3} \\
\ & & & & \YC{2} \\
\YC{} & \YC{1} \\
\YC{2}
\end{ytableau}
 \hspace{-1ex}\rightarrow\hspace{1ex}
\begin{ytableau}
\ & & & & & & & &   & \YC{3} \\
\ & & & & \YC{} & \YC{} & \YC{} & \YC{3}  \\
\ & & & & \YC{} & \YC{} & \YC{3} \\
\ & & & & \YC{} & \YC{2} \\
\ & & & & \YC{1} \\
\YC{} & \YC{2} \\
\YC{1}
\end{ytableau}
 \hspace{-1ex}\rightarrow\hspace{1ex}
\begin{ytableau}
\ & & & & & & & &   & \YC{2} \\
\ & & & & \YC{} & \YC{} & \YC{} & \YC{2}  \\
\ & & & & \YC{} & \YC{} & \YC{2} \\
\ & & & & \YC{} & \YC{1} \\
\ & & & & \YC{3} \\
\YC{} & \YC{1} \\
\YC{3}
\end{ytableau}
$$
\caption{Transforming $\l=(29,24,20,17,13,6,1)$ to $\pil[\nu]=(29,23,20,16,15,4,3)$ according to Proposition~\ref{prop:bijection}, in case $k=3$ and $\pi = 312 \in \Sym_3$. }
%Note that $\l  \in \Dstrict[3]{\{3\}}$ and $\pil[\nu]  \in \Dstrictpi[3]{\{1\}}$.}
\label{fig:bijection}
\end{figure}

\newcommand{\numberof}[3]{{#1}\big|_{#2 ; #3}}
\newcommand{\bignumberof}[3]{{#1}\Big|_{#2 ; #3}}

%For a subset $\mathcal{X}$ of partitions, let $\numberof{\mathcal{X}}{n}{a_1,\ldots,a_k}$ be the number of $\l \in \mathcal{X}$ of size $n$ that contain $a_i$ parts congruent to $i$ modulo $k$ for each $i$.  Then
%$$
%\bignumberof{\Dstrict{j}}{n}{ a_1,\ldots,a_k}
%=
%\bignumberof{\Dstrictpi{\pi^{-1}(j)}}{n}{ a_1,\ldots,a_k}
%$$

\begin{exmp}
Take $k=3$ and $J=\{3\}$.  Then, for any $\pi \in \Sym_3$,  there are equally many partitions of size $n$ in the sets $\Dstrict[3]{\{3\}}$ and $\Dstrictpi[3]{\{\pi^{-1}(3)\}}$. In particular, by taking $\pi=132$ we see that the quantity $s_n=t_n$ described by Schur's Theorem is also the number of partitions $\l$ of $n$ satisfying the gap conditions~\eqref{eq:andrewsgap}.  This was originally discovered via computer search by Andrews~\cite{And-2}, who also gave a proof using generating series. A combinatorial treatment was given in~\cite{AllGor-2}, along with an extensive analysis of the $k=3$ case of Proposition~\ref{prop:bijection}. 
\end{exmp}

%%%%%%%%%%%%%%%%%%%%%%%%%%%%%%%%%%%%%%%%%%%%%%%%%%%%
%%%%%%%%%%%%%%%%%%%%%%%%%%%%%%%%%%%%%%%%%%%%%%%%%%%%

%%%%%%%%%%%%%%%%%%%%%%%%%%%%%%%%%%%%%%%%%%%%%%%%%%%%
%%%%%%%%%%%%%%%%%%%%%%%%%%%%%%%%%%%%%%%%%%%%%%%%%%%%

\section*{Acknowledgements}

This article was born out of joint work with Toma\v{z} Ko\v{s}ir and Mitja Mastnak on the ``Box Conjecture'' for commuting pairs of nilpotent matrices~\cite{IKM}, where  Burge's correspondence was found to play a central (and surprising) role.  The author would like to thank both Toma\v{z} and Mitja for helpful discussions throughout its preparation.

\bibliographystyle{plain}

\end{document}